\def\scaleint#1{\vcenter{\hbox{\scaleto{\displaystyle\int}{#1}}}}
\declaretheorem[name=Theorem, numberwithin=section]{theorem}
\declaretheorem[sibling=theorem]{lemma}
\declaretheorem[sibling=theorem]{proposition}
\declaretheorem[sibling=theorem]{corollary}
\declaretheorem[sibling=theorem, style=definition, qed=\ensuremath{\blacktriangle}]{definition}
\declaretheorem[sibling=theorem, style=definition, qed=\ensuremath{\blacktriangle}]{example}
\newcommand{\interior}[1]{\ensuremath{\text{int}\left({#1}\right)}}
\newcommand{\range}[1]{\ensuremath{\operatorname{ran}\left({#1}\right)}}
\newcommand{\grad}{\ensuremath{\nabla}}
\newcommand{\R}{\ensuremath{\mathbb{R}}}
\newcommand{\N}{\ensuremath{\mathbb{N}}}
\newcommand{\dd}{\ensuremath{\textrm{d}}}
\newcommand{\future}[1]{\ensuremath{I^+\left(#1\right)}}
\newcommand{\jfuture}[1]{\ensuremath{J^+\left(#1\right)}}
\newcommand{\jpast}[1]{\ensuremath{J^-\left(#1\right)}}
\newcommand{\past}[1]{\ensuremath{I^-\left(#1\right)}}
\newcommand{\EmptySet}{\ensuremath{\varnothing}}
\newcommand{\esssup}{\ensuremath{{\rm ess\,sup}}}
\newcommand{\norm}[1]{\ensuremath{\left\lVert{#1}\right\rVert}}
\newcommand{\abs}[1]{\ensuremath{\left\lvert{#1}\right\rvert}}
\title{Limit curve theorems for incomplete metric spaces and 
the null distance on Lorentzian manifolds}
\author{Adam Rennie\dag\thanks{email: 
\texttt{renniea@uow.edu.au}, \texttt{ben@benwhale.com}
}
, Ben E. Whale\dag
\\[3pt]
\dag School of Mathematics and Applied Statistics, University of Wollongong\\
Wollongong, Australia\\[3pt]
}
\begin{document}
\maketitle

\begin{abstract}
  We prove a limit curve theorem for incomplete metric spaces. Our main
  application is to Sormani and Vegas' null distance, where our results give
  strong control on the Lorentzian lengths of limit curves.  We also show that
  regular cosmological time functions and the surface function of a Cauchy
  surface in a globally hyperbolic manifold define such a null distance.
\end{abstract}

\tableofcontents

\parindent=0.0in
\parskip=0.06in

\section{Introduction}

  This paper proves limit curve
  theorems for incomplete distances on Lorentzian manifolds
  and establishes new sufficient conditions for the upper semi-continuity
  of the Lorentzian length functional on curves.

  There are natural situations in which one wants the Lorentzian
  length functional to be upper semi-continuous over sequences of curves
  with common non-compact domain. We give necessary conditions
  for this in Lemma \ref{lem_improved_length_bound}.
  Existing
  limit curve theorems, however, assume a complete metric which
  may have little relation to the Lorentzian distance.
  The
  compatibility of the null distance with the Lorentzian distance makes it an
  attractive alternative,
  but there are no guarantees of completeness for the null
  distance. This was a strong motivator for our limit curve theorem,
  which allows 
  for incomplete distances,
  Theorem \ref{thm_CurveUniformConvergenceInBoundedRegion_extension}.

  Theorem \ref{thm_CurveUniformConvergenceInBoundedRegion_extension}
  only requires the metric to induce the manifold topology and need
  not be metrically complete.
  In Section 4 we specialise our limit curve theorem to the null distance
  of Sormani and Vega \cite{sormani2016null}
  so that we can make use of
  Lemma \ref{lem_improved_length_bound}.
  We require additional conditions given in 
  Proposition \ref{lem_cauchy_sort_of}
  and Theorem \ref{thm_reparametrisation_to_control_limsup}.
  We show that the null distance induced by
  a regular cosmological time or a suitable surface function
  \cite{rennie2016generalised} satisfy our conditions.

  The strangest feature of our definitions and results is that limit curves can
  have strictly smaller domains than the curves in the defining sequence. This is
  a direct consequence of using incomplete metrics, because portions of the
  ``obvious'' limit curve may not exist, or be obstructed by an incompleteness.
  See Sections 
  \ref{sec_llc}
  and
  \ref{sec_what_goes_wrong}
  for details and examples.

  In Section 
  \ref{sec_background}
  we recall the required
  background results, including the null distance,
  surface functions and known limit curve theorems. 
  We also study the
  relationship of the Lorentzian length to limit curve theorems, how
  incompleteness damages these relations, and provide our definition of limit
  curve in incomplete spaces.


  Section 
  \ref{sec_general_limit_curve}
  proves our limit curve theorems for incomplete distances, and in
  Section 
  \ref{sec_length_control}
  we exploit null distances associated to (suitable) time functions
  to control the Lorentzian length of limit curves. We show that regular
  cosmological time functions satisfy our requirements. In Section 
  \ref{subsec_surface_function_generalities},
  we show
  that surface functions of $C^1$ Cauchy surfaces give null distances with
  strong control on the Lorentzian lengths of limit
  curves induce null distances which can be used with
  Theorem \ref{thm_CurveUniformConvergenceInBoundedRegion_extension},
  see
  Corollaries \ref{corl:surf}
  and
  \ref{corl:surf2}

  {\bf Acknowledgements} We would like to thank Narla and the other staff at the
  Alabama Hotel, Hobart, where part of this work was conducted.

\section{Background}
  \label{sec_background}

  The material below recalls ideas relied on in this paper.
  General references for the needed Lorentzian geometry 
  are \cite{beem1996global} and
  \cite{penrose1972techniques}. A review of
  material relating to generalised time functions can be found
  in \cite{minguzzi2019lorentzian, minguzzi2008causal}.
  A detailed introduction to the limit curve theorem
  can be found in \cite{minguzzi2008limit}.

  \subsection{Some Lorentzian geometry}

    Our manifolds, denoted by $M$, are Hausdorff, paracompact and smooth.
    We allow an arbitrary number $\geq 1$ of spacelike
    dimensions. The metric is $g$ with signature $(-,+,\ldots, +)$
    and assumed to be smooth. 
    Since $M$ 
    is separable, any compact exhaustion of $M$ is independent of the
    choice of
    any metric or distance.

    We work with continuous curves in $M$.
    A curve is a continuous function
    $\gamma:I\to M$ where $I\subset \R$ is a connected interval.
    In particular, $\gamma(I)$ is connected.
    A change of parameter is a continuous, bijective, strictly
    increasing or decreasing function $s:J\to I$, where
    $J\subset\R$ is a connected interval.

    We extend 
    Penrose' definition of causal curves, 
    \cite[Definition 2.25]{penrose1972techniques},
    to curves that may not have compact
    image. 
    \begin{definition}[Causal, timelike and null curves]
      \label{def_causal_curve}
      A continuous function
      $\gamma:I\to M$ from a connected, not compact,
      interval $I$ of $\R$ is a future directed causal / timelike / null geodesic
      if for all $K\subset I$ a connected compact subinterval,
      the subcurve
      $\gamma|_K$ is a future directed causal / timelike / null curve
      in the sense of \cite[Definition 2.25]{penrose1972techniques}.
    \end{definition}
    A past directed, continuous causal curve is defined by time duality.
    By a causal curve we will always mean a continuous causal curve.


    Every causal curve $\gamma:I\to M$ 
    has a re-parametrisation $s:J\to I$ so that  
    for any chart $\phi:U\to \R^n$, the curve
    $\phi\circ\gamma\circ s$ is locally Lipschitz with respect to the
    Euclidean distance on $\R^n$, 
    \cite[Page 75ff, Equation 3.14]{beem1996global}. See
    also \cite[Theorem 2.12]{minguzzi2019lorentzian}.
    If $h$ is any Riemannian metric on $M$ then
    as $(\gamma\circ s)'$ exists for almost all $t\in J$, the (extended) number
    \[
      a=\int_J
        \sqrt{h((\gamma\circ s)'(t),(\gamma\circ s)'(t))}\,
        \dd t\ \ \in\ [0,\infty],
    \]
    is well-defined. 
    If the interval $I$ is closed  then
    let $A=[0,a]$, otherwise let $A$ be one of $(0,a)$, $[0,a)$, $(0,a]$
    depending on the closedness of $I$ to the left and right.
    Then there is a change of parameter
    $r:A\to J$ so that
    \[
      h((\gamma\circ s\circ r)'(t),(\gamma\circ s\circ r)'(t))=1,
    \]
    for almost all $t\in A$. 
    We call $r$ the arc-length parametrisation induced by $h$.

    A function is Cauchy if all of its level surfaces
    are Cauchy hypersurfaces. 
    We allow Cauchy functions
    to not be surjective onto $\R$. 
    Thus, we differ from \cite{bernal2003smooth}.

    Penrose has shown that continuous causal curves with compact image
    have well defined Lorentzian length
    \cite[Definition 7.4ff]{penrose1972techniques}.
    We can extend this definition to our continuous curves.
    If $\gamma:I\to M$ is a continuous causal curve with an arbitrary connected
    interval as a domain, we define
    \begin{equation}
      \label{eq_lg_def}
      L(\gamma)=\sup\{L(\gamma|_{[a,b]}:a,b\in I,\, a<b\},
    \end{equation}
    where $L(\gamma|_{[a,b]})$ is the Lorentzian length 
    as defined by Penrose
    \cite[Definition 7.4ff]{penrose1972techniques}.
    Thus, by definition if $\gamma:[0,a)\to M$ is a continuous causal curve then
    $L(\gamma)=\lim_{t\to a}L(\gamma|_{[0,t]})$. 

    A function $f:M\to\R$ is locally Lispschitz if on any compact subset $C$ of
    a chart, $f$ is Lipschitz with respect to any metric inducing the manifold
    topology on the chart.

    When $\gamma:I\to M$ is locally Lipschitz we have
    \begin{equation}
      \label{eq_rect_h1_area}
      L(\gamma)=\int_I\sqrt{-g(\gamma',\gamma')}\dd t,
    \end{equation}
    by \cite[Theorem 2.37]{minguzzi2019causality}.
    Minguzzi presents a similar discussion of
    causal curves with an emphasis on absolute continuity
    \cite[Sections 2.3, 2.4, and 2.5]{minguzzi2019causality}.

  \subsection{Cosmological time, surface functions and the null distance}
    
    The Lorentzian distance will be denoted $d_L$.
    If $h$ is an auxiliary Riemannian metric we write
    $d(\cdot,\cdot; h)$ for the (actual) distance induced by $h$. 
    The subscript
    $L$ on the Lorentzian distance is intended to remind the reader that
    $d_L$ is not a distance. 

    \begin{definition}[Cosmological time, {\cite{andersson1998cosmological}}]
      The cosmological times
      $\tau:M\to\R$ is defined by
      \[
        \tau(x)
        =
        \sup\left\{d_L(y,x): y\in\past{x}\right\}
      \]
      If $\tau(x)<\infty$ for all $x\in M$ 
      (so that the earliest time is a finite time ago) 
      and $\lim_{t}\tau(\gamma(t))=0$ 
      for all past-directed inextendible causal
      curves then we say that $\tau$ is regular.
    \end{definition}

    Given $S\subset M$ we define
    \begin{align*}
      d_L(S,x)&=\sup\{d_L(s,x):s\in S\}
      &\text{and}&&
      d_L(x,S)&=\sup\{d_L(x,s):s\in S\}.
    \end{align*}
    
    \begin{definition}[Surface function]
      \label{def_surface_function}
      If $S\subset M$ is an achronal 
      set such that $M=\future{S}\cup S\cup\past{S}$
      and for all $x\in M$, $d_L(S,x)<\infty$ and $d_L(x,S)<\infty$ then
      the function $\tau_S:M\to\R$ given by
      \[
        \tau_S(x)=\left\{\begin{aligned}
          &d_L(S,x), && x\in\future{S}\\
          &0, && x\in S,\\
          &-d_L(x, S), && x\in\past{S},
        \end{aligned}\right.
      \]
      is well-defined.
      We call $\tau_S$ the surface function associated to $S$.
    \end{definition}

    Surface functions, and in particular suitable surfaces, can be constructed
    on any Lorentzian manifold $(M,g)$ with
    finite Lorentzian distance, meaning that for all $x,y\in M$,
    $d_L(x,y)<\infty$, see \cite{rennie2016generalised}. 
    Surface functions
    are increasing on timelike curves and non-decreasing on null curves. 
    Surface
    functions are continuous almost everywhere on $M$, 
    \cite[Corollary A.2]{rennie2016generalised},
    and are differentiable almost everywhere on $M$,
    \cite[Theorem 1.19]{minguzzi2019causality}.

    We now introduce the null distance of a generalised time function
    \cite{sormani2016null}. 
    A generalised time function is any function that is increasing on
    causal curves.
    An alternating causal curve is a $C^0$ piecewise $C^\infty$
    function $\gamma:[a,b]\to M$, $[a,b]\subset \R$, with
    a finite partition $\{t_1,\ldots,t_k\}$, $[t_1,t_k]=[a,b]$
    so that for all $i=1,\dots, k-1$, $\gamma|_{[t_i,t_{i+1}]}$ is a
    smooth \emph{future or past} directed causal curve.
    Note that we use the phrase ``alternating causal curve'' where
    \cite{sormani2016null} uses the phrase ``piecewise causal curve'', which
    means something different for us.

    Any generalised time function $\tau$
    induces a functional $L(\cdot;\tau)$ on alternating causal curves,
    \begin{definition}[{\cite[Definition 3.2]{sormani2016null}}]
      \label{def_null_distance}
      Let $\tau$ be a generalised time function.
      If $\gamma:[a,b]\to M$, $[a,b]\subset \R$, with
      a finite partition $\{t_1,\ldots,t_k\}$, $[t_1,t_k]=[a,b]$,
      is an alternating causal curve then we define
      $L(\gamma; \tau)=
       \sum_{i=1}^{k-1}\abs{\tau(\gamma(t_{i+1})) - \tau(\gamma(t_i))}$.
      The function $d(\cdot,\cdot\,;\tau):M\times M\to\R$ called the
      null distance of $\tau$, \cite[Definition 3.2]{sormani2016null}, is
      defined by
      \begin{equation*}
        d(x,y;\tau)=\inf\{L(\gamma;\tau): \gamma\
        \text{is an alternating causal curve from}\ x\ \text{to}\ y\}.
        \qedhere
      \end{equation*}
    \end{definition}
    Note that null distances may only be pseudo-distances and
    may not induce the topology of the manifold.
    If $\gamma:[0,1]\to M$ is a future directed, piecewise smooth, 
    causal curve then
    $L(\gamma; \tau)=\tau(\gamma(1))-\tau(\gamma(0))$,
    \cite[Lemma 3.6(2)]{sormani2016null}.
    If $y\in\jfuture{x}$ then $d(x,y;\tau)=\tau(y)-\tau(x)$,
    \cite[Lemma 3.11]{sormani2016null}.

    To describe when a null distance is a distance that
    induces the manifold topology we need to introduce the concept
    of ``anti-Lipschitz''. Anti-Lipschitz functions were first
    introduced in \cite[Lemma 4.1ff]{chrusciel2016differentiability}.
    Anti-Lipschitz functions are also defined in
    \cite[Section 2.2, Item (e), Page 21]{minguzzi2019causality}
    and
    \cite[Definition 4.4]{sormani2016null}. 
    Equivalence of these various definitions is
    proven in \cite[Prop 4.9]{sormani2016null}.

    \begin{definition}
      \label{def_antilip}
      Let $(M,g)$ be a Lorentzian manifold and $f:M\to\R$ a function.
      The function $f$ is anti-Lipschitz 
      on an open set $U\subset M$
      if there exists a Riemannian metric $h:TU\times TU\to M$ so that
      for all $\gamma:[0,1]\to U$ a future directed causal curve
      \[
        f\circ\gamma(1)-f\circ\gamma(0)\geq L(\gamma;h).
      \]
      A function $f$ is locally anti-Lipschitz if it is anti-Lipschitz
      on a neighbourhood of each $p\in M$.
    \end{definition}
    An anti-Lipschitz function $f$ is increasing on all future directed causal
    curves
    and therefore is a generalised time function, 
    \cite[Definition 4.4]{sormani2016null}. We recall the following
    characterisation of when the null distance of a generalised time function
    is a metric compatible with the manifold topology.

    \begin{proposition}\cite[Propositions 3.15, 4.5]{sormani2016null}
      \label{prop:SV-metric}
      Let $f:M\to\R$ be a generalised time function.
      If $f$ is locally anti-Lipschitz then
      $d(\cdot,\cdot; f)$ is a distance.
      If, in addition, $f$ is continuous
      then 
      $d(\cdot,\cdot; f)$ induces the manifold topology.
    \end{proposition}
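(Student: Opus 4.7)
The plan is to treat the two assertions separately. The first reduces to positivity of $d(\cdot,\cdot;f)$, since symmetry and the triangle inequality are immediate, while the second amounts to a pair of neighbourhood-base inclusions at each point.

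For the first assertion, symmetry follows because reversing the parametrisation of an alternating causal curve interchanges its future- and past-directed pieces without altering the absolute-value sum $L(\gamma;f) = \sum |f(\gamma(t_{i+1})) - f(\gamma(t_i))|$, and the triangle inequality follows from concatenation of alternating causal curves combined with additivity of $L(\cdot;f)$. For positivity, given distinct $x,y\in M$, I would select an open neighbourhood $U$ of $x$ on which $f$ is anti-Lipschitz with auxiliary Riemannian metric $h$, then pick $\varepsilon > 0$ so that the $h$-ball $B$ of radius $\varepsilon$ about $x$ satisfies $\overline{B}\subset U$ and $y\notin\overline{B}$. For any alternating causal curve $\gamma:[a,b]\to M$ from $x$ to $y$, let $t^*$ be the first parameter with $\gamma(t^*)\in\partial B$; inserting $t^*$ into the partition only increases $L(\gamma;f)$ by the triangle inequality for $|\cdot|$ and preserves the alternating causal structure since sub-pieces of a smooth directed causal curve inherit the same time orientation. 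On each refined sub-piece $\gamma|_{[t_i,t_{i+1}]}$ contained in $\overline{B}\subset U$, applying the anti-Lipschitz bound of Definition \ref{def_antilip} to the piece (or to its time reversal if past-directed) yields $|f(\gamma(t_{i+1})) - f(\gamma(t_i))| \geq L(\gamma|_{[t_i,t_{i+1}]};h)$. Summing up to $t^*$ produces $L(\gamma;f) \geq L(\gamma|_{[a,t^*]};h) \geq d(x,\gamma(t^*);h) = \varepsilon$, hence $d(x,y;f) \geq \varepsilon > 0$.

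For the topology assertion, with $f$ also continuous, I would verify both containments between the null-distance and manifold neighbourhood bases at an arbitrary $x\in M$. That null-distance convergence implies manifold convergence reuses the positivity argument: if $(x_n)$ fails to converge to $x$ in the manifold topology, some subsequence lies outside a fixed $h$-ball $B\subset U$ as above, and the same chain of inequalities produces $d(x_n,x;f) \geq \varepsilon$ for all such $n$. For the opposite direction, working in a convex normal neighbourhood of $x$, to any $y$ sufficiently near $x$ one attaches a point $z$ in the common chronological future $\future{x} \cap \future{y}$, chosen to approach $x$ as $y$ does, and forms the two-piece alternating causal curve $x\to z\to y$ (future- then past-directed). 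This gives
\[
  d(x,y;f)\leq |f(z)-f(x)|+|f(z)-f(y)|,
\]
which tends to $0$ by continuity of $f$ as $y,z\to x$.

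The chief obstacle is the positivity step and its reuse in the topology half: one must verify carefully that inserting an exit point into the partition preserves the defining structure of an alternating causal curve, and that the anti-Lipschitz inequality transfers from future-directed to past-directed sub-pieces via reparametrisation, using that both $L(\cdot;h)$ and the endpoint difference $|f(\gamma(t_{i+1})) - f(\gamma(t_i))|$ are parametrisation invariant. A subsidiary point is justifying the existence of suitable $z\in\future{x}\cap\future{y}$ close to $x$ in the topology argument; this follows from $x\in\overline{\future{x}}$ in any convex normal neighbourhood together with openness of the chronological future.
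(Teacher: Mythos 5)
The paper does not actually prove this proposition; it is quoted directly from Sormani and Vega \cite{sormani2016null}, so there is no in-paper argument to compare against. Your reconstruction follows the standard route of that reference (definiteness via the local anti-Lipschitz bound and an exit-time argument; agreement of topologies via the same bound in one direction and short two-piece alternating causal curves through a nearby chronological-future point in the other), and it is essentially sound, but one justification points the wrong way and one requirement of being a distance is not addressed.

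The slip is in the positivity step: you assert that inserting the exit parameter $t^*$ into the partition ``only increases $L(\gamma;f)$ by the triangle inequality''. That inequality runs in the wrong direction for your purposes: you must bound the \emph{original} $L(\gamma;f)$ from below, and knowing that a refined sum is larger and is itself $\geq\varepsilon$ says nothing about the original sum. What rescues the argument is that the refinement does not change $L(\gamma;f)$ at all: since $f$ is a generalised time function, $f\circ\gamma$ is monotone on each future- or past-directed smooth piece, so $\abs{f(\gamma(t_{i+1}))-f(\gamma(t_i))}=\abs{f(\gamma(t^*))-f(\gamma(t_i))}+\abs{f(\gamma(t_{i+1}))-f(\gamma(t^*))}$ holds with equality. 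With that substitution the chain $L(\gamma;f)\geq L(\gamma|_{[a,t^*]};h)\geq\varepsilon$ is valid. Secondarily, a distance must be finite-valued, so you should also note that any two points of a connected spacetime are joined by some alternating causal curve (this is \cite[Lemma 3.5]{sormani2016null}), ensuring the infimum is over a nonempty set. The remainder --- symmetry by reversal of parametrisation, the triangle inequality by concatenation, and the two-sided comparison of neighbourhood bases, including the existence of $z\in\future{x}\cap\future{y}$ arbitrarily close to $x$ once $y$ is close to $x$ --- is correct as you describe it.
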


  \subsection{Smooth approximation of Lipschitz functions}
    \label{app:Lip}
  
    We shall use
    Czarnecki and Rifford's approximation theorem,
    \cite[Theorem 2.2, page 4475]{czarnecki2006approximation},
    to smoothly approximate a locally Lipschitz function.
    This result relies on
    uses Clarke's generalised gradient \cite{clarke1990optimization}.
    We briefly review the required results and definitions here.

    \begin{definition}[Clarke's generalised directional derivative, {\cite[Page 25]{clarke1990optimization}}]
      Let $Y\subset \R^n$, $v, x\in \R^n$.
      If $f:Y\to \R$ is Lipschitz on a neighbourhood of $x$ 
      then
      the generalised directional derivative of $f$ at $x$ in the direction
      $v$ is denoted $f^\circ(x;v)$ and is defined by
      \[
        f^\circ(x;v)=\limsup_{\substack{y\to x\\ t\downarrow 0}}
          \frac{f(y + tv) - f(y)}{t}.\qedhere
      \]
    \end{definition}

    The generalised directional derivative is similar to a partial strong
    derivative but uses a $\limsup$
    rather than $\lim$. 
    See \cite[Proposition 2.1.1ff]{clarke1990optimization} for
    proofs of some properties.

    \begin{definition}[Clarke's generalised gradient, {\cite[Page 27]{clarke1990optimization}}]
      \label{def_cgg}
      Let $Y\subset \R^n$, $v, x\in \R^n$.
      If $f:Y\to \R$ is Lipschitz on a neighbourhood of $x$ 
      then the generalised gradient of $f$ at $x$ is
      denoted $\partial^\circ f(x)$ 
      or
      $\partial^\circ f|_x$ 
      and is defined by
      \[
        \partial^\circ f(x) = 
        \{w\in\R^n: \forall v\in\R^n,\, f^\circ(x;v)\geq w\cdot v\},
      \]
      where $w\cdot v$ is the dot product, the standard Euclidean norm on
      $\R^n$.
    \end{definition}
    Strictly, Clarke's generalised gradient is a subset of the dual space.
    Throughout this paper we identify differential forms over $\R^n$
    with vectors over $\R^n$ via the isomorphism given by the standard
    Euclidean metric.

    Properties and applications
    of the generalised gradient can be found in \cite{clarke1990optimization}.
    In Theorem \ref{thm:clarke} for any $A\subset\R^n$ we
    denote the closure of the convex hull of $A$ by
    $\overline{\operatorname{co}}\{A\}$.

    Let $f:\R^n\to \R$ be a $C^1$ function.
    The gradient of $f$, in the standard Euclidean metric, will
    be denoted $Df$.

    \begin{theorem}[{\cite[Thm 2.5.1]{clarke1990optimization}}]
      \label{thm:clarke}
      Let $Y\subset\R^n$
      $f:Y\to\R$ be a locally Lipschitz function, and let $S$ be any subset of
      Lebesgue measure zero. Then
      \[
      \partial^\circ f(x)=
        \overline{\operatorname{co}}\{
          \lim Df(x_i):\,x_i\to x,\ x_i\not\in S,\  f\ 
          \mbox{differentiable at }x_i\}.
      \]
    \end{theorem}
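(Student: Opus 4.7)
The plan is to prove the equality by establishing two inclusions. Denote by $G(x)$ the closed convex hull on the right-hand side. By Rademacher's theorem $f$ is differentiable almost everywhere on $Y$, so in every neighbourhood of $x$ there are sequences $x_i\to x$ with $x_i\notin S$ at which $f$ is differentiable; the gradients $Df(x_i)$ are uniformly bounded by the local Lipschitz constant of $f$ near $x$, so convergent subsequences exist and $G(x)$ is a non-empty, closed, convex, bounded subset of $\R^n$.

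For the inclusion $G(x)\subseteq\partial^\circ f(x)$, I would proceed in three steps. First, at any point $y$ of differentiability, $Df(y)\in\partial^\circ f(y)$: this reads off Definition \ref{def_cgg} once one checks $f^\circ(y;v)\geq Df(y)\cdot v$, which is direct from the $\limsup$ definition. Second, $\partial^\circ f$ has closed graph in the sense that $y_i\to x$, $w_i\in\partial^\circ f(y_i)$, $w_i\to w$ imply $w\in\partial^\circ f(x)$; this follows by passing to the limit in $f^\circ(y_i;v)\geq w_i\cdot v$ and using that $(y,v)\mapsto f^\circ(y;v)$ is upper semicontinuous directly from its $\limsup$ definition. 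Applied with $w_i=Df(x_i)$ this yields every limit of gradients along the defining right-hand sequences as an element of $\partial^\circ f(x)$. Third, $\partial^\circ f(x)$ is itself closed and convex (being an intersection of closed half-spaces $\{w:w\cdot v\leq f^\circ(x;v)\}$), so it contains the closed convex hull $G(x)$.

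For the converse inclusion $\partial^\circ f(x)\subseteq G(x)$, I would argue by contradiction. Suppose $w\in\partial^\circ f(x)\setminus G(x)$. Since $G(x)$ is closed and convex, the Hahn--Banach separation theorem supplies $v\in\R^n$ with
\[
  w\cdot v \;>\; \sup_{u\in G(x)} u\cdot v \;=\; \limsup_{\substack{y\to x,\,y\notin S\\ f\text{ diff at }y}} Df(y)\cdot v.
\]
The crucial identity to establish is then
\[
  f^\circ(x;v) \;=\; \limsup_{\substack{y\to x,\,y\notin S\\ f\text{ diff at }y}} Df(y)\cdot v,
\]
because combining it with $f^\circ(x;v)\geq w\cdot v$ (from $w\in\partial^\circ f(x)$) contradicts the strict separation above.

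The main obstacle is this crucial identity. The $\geq$ direction is easy: at differentiable $y$ one has $Df(y)\cdot v=f^\circ(y;v)$, and $f^\circ(\cdot;v)$ is upper semicontinuous. For the $\leq$ direction the idea is a Fubini argument exploiting the absolute continuity of $f$ along lines in direction $v$. By Fubini, for almost every $y$ near $x$ the line $\{y+sv:s\in\R\}$ meets both $S$ and the set of non-differentiability of $f$ in a $1$-dimensional null set, so $f(y+tv)-f(y)=\int_0^t Df(y+sv)\cdot v\,\dd s$ and hence $(f(y+tv)-f(y))/t$ is bounded above by the essential supremum of $Df(y+sv)\cdot v$ over $s\in(0,t)$. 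Because $f$ is locally Lipschitz, any ``bad'' $y$ in the $\limsup$ defining $f^\circ(x;v)$ can be perturbed to a nearby ``good'' $y'$ at cost $O(|y'-y|/t)$ in the difference quotient; choosing the perturbation small relative to $t$ shows the $\limsup$ is unchanged by restricting to good $y$. Passing to the $\limsup$ as $(y,t)\to(x,0)$ then bounds $f^\circ(x;v)$ above by the right-hand side, completing the proof.
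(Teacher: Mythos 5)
The paper does not prove this statement: Theorem \ref{thm:clarke} is imported verbatim from Clarke's book (\cite[Thm 2.5.1]{clarke1990optimization}) as background material, so there is no in-paper argument to compare against. Your proposal reconstructs what is essentially Clarke's own proof, and it is correct: the easy inclusion via $Df(y)\in\partial^\circ f(y)$ at points of differentiability, the closed-graph/upper-semicontinuity property of $\partial^\circ f$, and closedness and convexity of $\partial^\circ f(x)$; then the reverse inclusion by Hahn--Banach separation reduced to comparing support functions, with the key inequality $f^\circ(x;v)\leq\limsup Df(y)\cdot v$ over good $y$ obtained from Fubini (almost every line in direction $v$ misses $S$ and the non-differentiability set in one-dimensional measure), the fundamental theorem of calculus along such lines, and a Lipschitz perturbation of bad base points $y$ at cost $O(|y-y'|/t)$. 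One small overstatement: at a point of differentiability $y$ you claim $Df(y)\cdot v=f^\circ(y;v)$, which is false in general (consider $f(x)=x^2\sin(1/x)$ at $0$); only $Df(y)\cdot v\leq f^\circ(y;v)$ holds, and that is exactly what your argument needs there (and indeed the $\geq$ half of your ``crucial identity'' is not used in the separation contradiction at all), so this does not damage the proof.
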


    \begin{theorem}[{\cite[Theorem 2.2, page 4475]{czarnecki2006approximation}}]
      \label{thm_unreasonable_approx_result}
      Let $B\subset\R^n$ be the open unit ball with respect to the
      standard Euclidean metric.

      Let $Y\subset\R^n$ be an open subset and let
      $f:Y\to\R$ be a locally Lipschitz function.
      Then, for every continuous function $\epsilon:Y\to\R^+$,
      there exists a smooth function $f_\epsilon:Y\to\R$
      such that for all $x\in Y$,
      \begin{align*}
        \abs{f_\epsilon(x) - f(x)}&\leq \epsilon(x)
      \end{align*}
      and
      \begin{align*}
        Df_\epsilon(x) &\in \partial^\circ f|_{(x + \epsilon(x)B)\cap Y}
          + \epsilon(x) B,\qquad
        \partial^\circ f(x) \subset Df|_{(x + \epsilon(x)B)\cap Y}
          + \epsilon(x) B.
      \end{align*}
    \end{theorem}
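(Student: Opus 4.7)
The plan is to construct $f_\epsilon$ by combining localised mollifications of $f$ with a smooth partition of unity whose scales are controlled by $\epsilon$. For each $x \in Y$, choose $r(x) > 0$ small enough that $B(x, 2r(x)) \subset Y$ and $f$ is $L(x)$-Lipschitz on $B(x, 2r(x))$, with both $r(x)$ and $r(x)L(x)$ bounded by $\epsilon(x)/C$ for a sufficiently large universal $C$. Extract a locally finite subcover $\{B(x_i, r_i)\}$ of $Y$ and a subordinate smooth partition of unity $\{\phi_i\}$, and for each $i$ pick a mollification scale $\delta_i \ll r_i$ and a standard Euclidean mollifier $\rho_{\delta_i}$. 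Define
\[
  f_\epsilon(x) = \sum_i \phi_i(x)\,(f*\rho_{\delta_i})(x).
\]
The sum is locally finite, hence smooth, and each $f*\rho_{\delta_i}$ is well-defined on $\operatorname{supp}\phi_i$ because $B(x_i, r_i + \delta_i) \subset Y$.

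For the sup bound, local Lipschitz continuity gives $|(f*\rho_{\delta_i})(x) - f(x)| \le L_i\delta_i$ on $\operatorname{supp}\phi_i$; choosing $\delta_i$ so that this is below $\epsilon(x)$ there, and averaging against $\sum_i \phi_i = 1$, yields $|f_\epsilon(x) - f(x)| \le \epsilon(x)$. For the gradient, I would use the identity $\sum_i D\phi_i \equiv 0$ to split
\[
  Df_\epsilon(x) = \sum_i \phi_i(x)\,D(f*\rho_{\delta_i})(x) + \sum_i D\phi_i(x)\,\bigl[(f*\rho_{\delta_i})(x) - f(x)\bigr].
\]
The second sum is bounded in norm by $\bigl(\sum_i |D\phi_i(x)|\bigr)\max_i L_i\delta_i$, which is below any prescribed fraction of $\epsilon(x)$ after shrinking the $\delta_i$. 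By Rademacher's theorem, $D(f*\rho_{\delta_i})(x) = \int Df(x-y)\,\rho_{\delta_i}(y)\,dy$ is a convex combination of classical gradients sampled from $B(x, \delta_i)$, each of which lies in $\partial^\circ f$ at its sampling point; by upper semicontinuity of $\partial^\circ f$ and convexity of $\partial^\circ f(y)$ at a fixed $y$, the average sits in $\partial^\circ f|_{(x+\epsilon(x)B)\cap Y} + \epsilon(x)B$. The outer convex combination via the $\phi_i(x)$ preserves this, giving the first inclusion.

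The main obstacle is the reverse inclusion $\partial^\circ f(x) \subset Df|_{(x+\epsilon(x)B)\cap Y} + \epsilon(x)B$: every element of $\partial^\circ f(x)$ must be realised, within $\epsilon(x)$, by an actual classical gradient of $f$ at a nearby point. The natural tool here is Theorem \ref{thm:clarke}, which identifies $\partial^\circ f(x)$ as the closed convex hull of limits of $Df(x_j)$ with $x_j \to x$ through points of differentiability. My plan is to use Carathéodory to express any $\xi \in \partial^\circ f(x)$ as a convex combination of at most $n+1$ such limits, approximate each by an $Df(y_k)$ with $y_k \in B(x,\epsilon(x))$, and then combine these through a continuous selection argument across $(x+\epsilon(x)B)\cap Y$ so that a single base point realises $\xi$ up to $\epsilon(x)$. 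Making this quantitative and compatible with the variable scale $\epsilon$ and the partition-of-unity bookkeeping is the delicate step; once it is in place, the remaining estimates follow by taking the $\delta_i$ small enough to absorb all error terms.
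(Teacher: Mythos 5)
First, a point of comparison: the paper does not prove this statement at all --- it is quoted verbatim (as Theorem \ref{thm_unreasonable_approx_result}) from Czarnecki and Rifford \cite[Theorem 2.2]{czarnecki2006approximation}, and only the first inclusion is ever used later (in the proof of Proposition \ref{lem_time_function_parametrisation}). So there is no in-paper argument to measure you against; the benchmark is the cited source. Your construction --- locally finite cover with radii adapted to $\epsilon$, mollification at scales $\delta_i\ll r_i$, partition of unity, the identity $\sum_i D\phi_i\equiv 0$ to control the gradient error --- is exactly the Czarnecki--Rifford strategy, and your treatment of the sup bound and of the first inclusion is essentially sound. One wording to tighten: the average $\int Df(x-y)\rho_{\delta_i}(y)\,\dd y$ lies a priori only in the \emph{convex hull} of $\bigcup_{y}\partial^\circ f(y)$, which need not be contained in the union itself; the fix is the one you gesture at, namely upper semicontinuity gives $\bigcup_{y\in B(x,\delta)}\partial^\circ f(y)\subset\partial^\circ f(x)+\eta B$ for small $\delta$, the right-hand side is convex, and $\partial^\circ f(x)\subset\partial^\circ f|_{(x+\epsilon(x)B)\cap Y}$ trivially. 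The same remark applies to the outer combination over the $\phi_i$.

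The genuine gap is your plan for the second inclusion, and it is not merely a delicate step: under the reading you have adopted (that $Df$ on the right-hand side means the classical gradient of $f$ itself at its points of differentiability), the inclusion is \emph{false}, so no amount of Carath\'eodory plus continuous selection can establish it. Take $f(x)=\abs{x}$ on $Y=\R$ and $\epsilon\equiv 1/2$. By Theorem \ref{thm:clarke}, $\partial^\circ f(0)=\overline{\operatorname{co}}\{-1,1\}=[-1,1]$, so $0\in\partial^\circ f(0)$; but $Df(y)\in\{-1,+1\}$ at every point of differentiability, hence $Df|_{(0+\frac12 B)\cap\R}+\frac12 B$ is contained in $(-\tfrac32,-\tfrac12)\cup(\tfrac12,\tfrac32)$ and misses $0$. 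The underlying obstruction is exactly the one your plan runs into: a convex combination of gradients taken at \emph{different} points is not itself a gradient of $f$ at any point. The resolution is that in \cite{czarnecki2006approximation} the right-hand side of the second inclusion involves the gradients of the \emph{smooth approximant} (their $g$, i.e.\ $Df_\epsilon$), not of $f$ --- the paper's transcription is at best ambiguous here, since its own convention reserves $Df$ for $C^1$ functions. Under that correct reading the statement is true (continuity of $Df_\epsilon$ lets its values over a neighbourhood sweep out enough of $\partial^\circ f(x)$), but proving it requires showing that each limit $\lim Df(x_j)$ appearing in Theorem \ref{thm:clarke} is realised, up to $\epsilon(x)$, by $Df_\epsilon$ at some nearby point --- an argument tied to the specific mollification construction, and the genuinely hard part of the cited theorem. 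Your proposal neither identifies the correct target nor supplies that argument, so as written the second inclusion is unproven (and, as you have interpreted it, unprovable).
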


  \subsection{Limit curve theorems for complete spaces}

    We will make use of standard limit curve theorems 
    \cite[Page 369]{galloway1986curvature} and the upper semi-continuity 
    of the Lorentzian 
    length function $L$ with respect to the topology of uniform convergence on
    compact sets \cite[Proposition, page 369]{galloway1986curvature}.
    To the best of our knowledge these results were first presented in
    \cite{galloway1986curvature}, though Galloway does indicate that
    the first edition of \cite{beem1996global} contains similar results.

    The following is a typical version of ``the'' limit curve theorem.
    For a modern and more flexible 
    statement of the result see \cite{minguzzi2008limit}.
    \begin{theorem}[{A paraphrase of \cite[Lemma 14.2]{beem1996global}}]
      \label{thm_trad_lim_curve}
      Let $\gamma_i:\R^+\to M$ be a sequence of future directed 
      causal curves parametrised
      with respect to the arc-length induced by a complete Riemannian metric.
      If $x$ is an accumulation point of $(\gamma_i(0))_i$ then
      there exists an inextendible future directed causal curve
      $\gamma:\R^+\to M$ so that $\gamma(0)=x$ and a subsequence
      $(\gamma_{i_k})$ which converges to $\gamma$ uniformly 
      on compact subsets of $\R^+$ with respect to the distance induced
      by the Riemannian metric.
    \end{theorem}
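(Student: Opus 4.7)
The plan is to follow the standard Arzelà--Ascoli plus diagonal-extraction recipe, using completeness of $h$ to obtain compactness and arc-length parametrisation to obtain equicontinuity; the real work is to verify that the resulting uniform limit is actually causal and inextendible.

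Before extracting anything I would set the stage: because each $\gamma_i$ is parametrised by $h$-arc-length, the estimate $d(\gamma_i(s),\gamma_i(t);h)\leq |s-t|$ holds, so the family $\{\gamma_i\}$ is uniformly $1$-Lipschitz. After passing to a subsequence I may assume $\gamma_i(0)\to x$. Fix $n\in\N$. Then for $i$ large and $t\in[0,n]$ the point $\gamma_i(t)$ lies in the closed $h$-ball $\overline{B}(x,n+1)$, which is compact by the Hopf--Rinow theorem (this is exactly where completeness of $h$ is used). Equicontinuity is immediate from the Lipschitz bound and pointwise precompactness follows from compactness of the ball, so Arzelà--Ascoli yields a subsequence converging uniformly on $[0,n]$ to some continuous curve $\gamma^{(n)}:[0,n]\to M$. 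A standard diagonal argument then produces a single subsequence $(\gamma_{i_k})$ converging uniformly on every compact subset of $\R^+$ to a curve $\gamma:\R^+\to M$, and $\gamma$ inherits the $1$-Lipschitz bound, so in particular it is continuous with $\gamma(0)=x$.

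Next I would verify that $\gamma$ is a future directed causal curve in the sense of Definition \ref{def_causal_curve}. Fix a compact subinterval $K\subset\R^+$; it suffices to show $\gamma|_K$ is causal in Penrose's sense. Cover $\gamma(K)$ by finitely many convex normal neighbourhoods $U_1,\dots,U_m$ such that inside each $U_j$ the causal relation $J^+$ is closed (this is a local property of the Lorentzian manifold). Subdivide $K$ into finitely many subintervals whose images lie in a single $U_j$. On each such subinterval, $\gamma_{i_k}$ is causal for all large $k$, so for any $s<t$ in that subinterval $\gamma_{i_k}(t)\in J^+(\gamma_{i_k}(s))$; taking limits and using closedness of $J^+$ within $U_j$, we get $\gamma(t)\in J^+(\gamma(s))$. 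This makes $\gamma|_K$ a continuous future directed causal curve, and since $K$ was arbitrary $\gamma$ is a causal curve.

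Finally, inextendibility: because $\gamma$ is the uniform limit of $h$-arc-length parametrisations, $\gamma$ is itself $1$-Lipschitz with respect to $h$, and one checks that the $h$-length of $\gamma|_{[0,T]}$ equals $T$ (using lower semi-continuity of length under uniform convergence, together with the reverse bound from the Lipschitz estimate). Since $h$ is complete, any extension of $\gamma$ beyond $\R^+$ would have to leave every compact set in finite $h$-length, which is impossible as $\gamma$ is defined on all of $\R^+$ with infinite total $h$-length; equivalently, the domain is already a maximal half-line, so $\gamma$ is inextendible. The main obstacle in writing this out carefully is the causality step: one has to be sure that the local closedness of $J^+$ really does apply in the selected convex neighbourhoods and that the subdivision of $K$ respects the uniform convergence, but both are standard once the covering is fixed.
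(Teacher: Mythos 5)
The paper does not prove this statement; it is quoted as a paraphrase of \cite[Lemma 14.2]{beem1996global}, so there is no internal proof to compare against. Your Arzel\`a--Ascoli/diagonal-extraction skeleton and your causality argument (local closedness of $J^+$ in convex normal neighbourhoods, applied after uniform convergence forces the approximating segments into the same neighbourhood) are the standard route and essentially coincide with the paper's own Lemma \ref{lem.pointwiseconvergenceforcausalcharacter}. Those parts are fine.

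The inextendibility step, however, has a genuine gap. You claim $L(\gamma|_{[0,T]};h)=T$ ``using lower semi-continuity of length under uniform convergence, together with the reverse bound from the Lipschitz estimate'', but both tools point the same way: lower semi-continuity gives $L(\gamma|_{[0,T]};h)\leq\liminf_k L(\gamma_{i_k}|_{[0,T]};h)=T$, and the $1$-Lipschitz bound also gives $L(\gamma|_{[0,T]};h)\leq T$. Neither yields the lower bound $L(\gamma|_{[0,T]};h)\geq T$, and indeed that equality is false in general: the a.e.\ derivative of the limit lands in the closed convex hull of the unit causal vectors, which contains vectors of $h$-norm strictly less than $1$. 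What is true --- and what you actually need --- is a strictly positive lower bound on the speed of the limit, and this requires the causal structure: the future cone is pointed, so the convex hull of the future-directed $h$-unit causal vectors over a compact set stays uniformly away from the zero section (equivalently, causal curves confined to a compact subset of a convex normal neighbourhood have $h$-length bounded by a uniform constant). Moreover, even granting $L(\gamma;h)=\infty$, your final sentence conflates ``domain equals $\R^+$'' with ``inextendible'': a curve of infinite $h$-length can still converge to a point $p$ as $t\to\infty$ and hence have a future endpoint. The correct argument is: if $\gamma(t)\to p$, then $\gamma$ and (by uniform convergence on compacta) arbitrarily long $h$-arc-length segments of the $\gamma_{i_k}$ are eventually trapped in a compact subset of a convex normal neighbourhood of $p$, contradicting the uniform $h$-length bound for causal curves in such a set. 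This non-imprisonment lemma is the actual content of \cite[Lemma 14.2]{beem1996global} that your write-up omits.
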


    The upper semi-continuity of the Lorentzian length functional
    $L$ has been expressed in a handful of
    different ways, for example
    \cite[Proposition, page 369]{galloway1986curvature},
    \cite[Theorem 7.5]{penrose1972techniques}, 
    \cite[Proposition 14.3]{beem1996global}, and 
    \cite[Theorem 2.4]{minguzzi2008limit}.
    We rephrase Geroch's formulation
    \cite[Proposition, page 369]{galloway1986curvature}.
    
    \begin{theorem}
      \label{thm_upper_semi_cts}
      Let $I\subset\R$ be a compact interval
      and let $d$ be a metric on $M$ which is compatible with the
      manifold topology.
      For each $i\in\N$, let $\gamma_i:I\to M$ be a past directed 
      inextendible causal curve. 
      If the sequence $(\gamma_i)$ converges uniformly, with respect
      to a metric $d$,
      to a past directed inextendible causal curve $\gamma:I\to M$,
      then 
      \[
        L(\gamma)\geq\limsup_iL(\gamma_i).
      \]
    \end{theorem}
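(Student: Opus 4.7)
The plan is to reduce the problem to a finite estimate inside convex normal neighborhoods, where the Lorentzian distance $d_L$ is jointly continuous and dominates the Lorentzian length on causal curves.

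First, I would cover the compact set $\gamma(I)$ by finitely many convex normal neighborhoods $U_1,\ldots,U_N$. Applying the Lebesgue number lemma to $I$ with the pullback cover, I would choose a partition $a=t_0<t_1<\cdots<t_k=b$ of $I$ such that each $\gamma([t_{j-1},t_j])$ lies in some $U_{n_j}$, with the endpoints $\gamma(t_j)$ in the interior. Uniform convergence $\gamma_i\to\gamma$ in $d$ then guarantees that, for all $i$ sufficiently large, $\gamma_i([t_{j-1},t_j])\subset U_{n_j}$ as well.

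Within each $U_{n_j}$, any causal curve joining two points has Lorentzian length bounded above by the Lorentzian distance between its endpoints, since $d_L$ is by definition the supremum of lengths of such causal curves. Combining this with additivity of $L$ across the partition,
\[
  L(\gamma_i)=\sum_{j=1}^{k}L(\gamma_i|_{[t_{j-1},t_j]})\leq\sum_{j=1}^{k}d_L(\gamma_i(t_{j-1}),\gamma_i(t_j)).
\]
Joint continuity of $d_L$ on convex normal neighborhoods, applied to $\gamma_i(t_j)\to\gamma(t_j)$, then yields
\[
  \limsup_{i\to\infty}L(\gamma_i)\leq\sum_{j=1}^{k}d_L(\gamma(t_{j-1}),\gamma(t_j)).
\]

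The main technical point is bridging the right-hand side back to $L(\gamma)$ itself. From Penrose's definition of length for continuous causal curves one obtains $L(\gamma)=\inf_{\mathcal{P}}\sum_j d_L(\gamma(t_{j-1}),\gamma(t_j))$, with infimum running over sufficiently fine partitions whose pieces lie in convex normal neighborhoods. Given $\epsilon>0$, I would refine the partition above to secure $\sum_j d_L(\gamma(t_{j-1}),\gamma(t_j))\leq L(\gamma)+\epsilon$; refinement only sharpens the neighborhood condition and, by the reversed triangle inequality $d_L(p,r)\geq d_L(p,q)+d_L(q,r)$ for $p\leq q\leq r$, can only decrease the sum. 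Letting $\epsilon\downarrow 0$ yields $\limsup_i L(\gamma_i)\leq L(\gamma)$, completing the proof.
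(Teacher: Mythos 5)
The paper does not prove Theorem \ref{thm_upper_semi_cts} itself; it simply cites \cite[Proposition 14.3]{beem1996global}. Your strategy --- partition $I$ so each segment lies in a convex normal neighbourhood, bound segment lengths by two-point Lorentzian distances, pass to the limit using continuity, and recover $L(\gamma)$ from partition sums --- is precisely the classical proof found in that reference and in Penrose, so the approach is the right one.

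There is, however, a genuine gap in the execution: you use the \emph{global} Lorentzian distance $d_L$ where the argument requires the \emph{local} (intrinsic) Lorentzian distance of each convex normal neighbourhood $U_{n_j}$, i.e.\ the supremum of lengths over causal curves remaining in $U_{n_j}$, which is realised by the unique connecting geodesic and is continuous via the world function. Two of your steps fail for the global $d_L$ in the stated generality (the theorem imposes no causality conditions on $(M,g)$). First, the global $d_L$ is only lower semicontinuous, so ``joint continuity of $d_L$'' cannot be invoked to pass from $d_L(\gamma_i(t_{j-1}),\gamma_i(t_j))$ to $d_L(\gamma(t_{j-1}),\gamma(t_j))$ in the limit --- upper semicontinuity is exactly what that step needs and exactly what can fail. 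Second, Penrose's definition expresses $L(\gamma)$ as the infimum of partition sums of the \emph{local} distances; with the global $d_L$ the corresponding infimum can be strictly larger, or even identically $+\infty$ (e.g.\ in a totally vicious spacetime), so your bridge back to $L(\gamma)$ breaks. Replacing $d_L$ by the local distance of each $U_{n_j}$ throughout repairs both steps: it still dominates the lengths of causal segments contained in $U_{n_j}$, it is continuous on $U_{n_j}\times U_{n_j}$, the reverse triangle inequality holds for causally ordered triples there, and the infimum of the resulting partition sums is $L(\gamma)$ by definition. A minor further point: choose each $U_{n_j}$ so that $\gamma([t_{j-1},t_j])$ lies in a compact subset of it, so that uniform convergence genuinely forces $\gamma_i([t_{j-1},t_j])\subset U_{n_j}$ for large $i$.
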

    A proof can be found in
    \cite[Proposition 14.3]{beem1996global}.
    Theorem \ref{thm_upper_semi_cts}
    does not necessarily hold if the curves are defined over a non-compact interval.
    Examples \ref{ex_convergence_in_incomplete} and
    \ref{ex_convergence_in_incomplete_complete_riemanna} below
    demonstrate this.
        
    In limit curve 
    theorems for Lorentzian manifolds, the
    limit curve is causal.
    This has been proven
    elsewhere and is well-known,
    see for instance \cite[Lemma 2.7]{minguzzi2008limit} or
    \cite[Second paragraph page 77]{beem1996global}.
    For completeness we include the following result, using a
    proof that we have not seen elsewhere which is inspired
    by \cite[Lemma 3.29]{beem1996global}.

    \begin{lemma}
      \label{lem.pointwiseconvergenceforcausalcharacter}
      Let $(M,g)$ be a Lorentzian manifold.
      For each $i\in\N$,
      let $\gamma_i:(a,b)\to M$ be a future directed causal curve.
      If there exists a continuous curve
      $\gamma:(a,b)\to M$ so that
      for all $t\in (a,b)$, $\gamma_i(t)\to\gamma(a)$ then
      $\gamma$ is a continuous causal curve.
    \end{lemma}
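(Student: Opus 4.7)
The displayed convergence hypothesis appears to contain a typo; I read it as pointwise convergence $\gamma_i(t)\to\gamma(t)$ for every $t\in(a,b)$. By Definition \ref{def_causal_curve} it then suffices to prove that for every compact $[t_1,t_2]\subset(a,b)$ one has $\gamma(t_2)\in\jfuture{\gamma(t_1)}$.

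My plan is to localise along $\gamma$ in causally well-behaved neighbourhoods and pass to the limit on each piece. The local fact I would invoke is that every $p\in M$ admits arbitrarily small open neighbourhoods $V$ which are simultaneously convex normal (so that $J^+_V$ is closed in $V\times V$, being cut out by the condition that the radial vector $\exp_p^{-1}(q)$ be future-causal) and causally convex in $M$ (every causal curve in $M$ with both endpoints in $V$ is entirely contained in $V$). The existence of such neighbourhood bases is a standard local property of smooth Lorentzian manifolds, see e.g.\ \cite{minguzzi2019lorentzian}.

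Given $t_1<t_2$, continuity of $\gamma$ and compactness of $[t_1,t_2]$ allow me to pick a partition $t_1=s_0<s_1<\cdots<s_N=t_2$ together with neighbourhoods $V_k$ of the above type satisfying $\gamma([s_{k-1},s_k])\subset V_k$ for each $k$. Pointwise convergence at the finitely many sample points $s_k$ gives that, for all sufficiently large $i$, both $\gamma_i(s_{k-1})$ and $\gamma_i(s_k)$ lie in $V_k$; causal convexity then traps $\gamma_i|_{[s_{k-1},s_k]}$ inside $V_k$, so that $\gamma_i(s_k)\in J^+_{V_k}(\gamma_i(s_{k-1}))$. Letting $i\to\infty$ and using closedness of $J^+_{V_k}$ yields $\gamma(s_k)\in J^+_{V_k}(\gamma(s_{k-1}))\subset\jfuture{\gamma(s_{k-1})}$. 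Transitivity of $\jfuture{\cdot}$ chains these $N$ inclusions into $\gamma(t_2)\in\jfuture{\gamma(t_1)}$, and the parameter orientation $s_{k-1}<s_k$ carries over to give a future-directed curve.

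The main obstacle is that pointwise convergence alone offers no control on where $\gamma_i$ travels between consecutive sample points; this is precisely what forces the use of causally convex $V_k$, whose role is to stop the approximating causal curves from leaking out of the local region in which closedness of the causal relation is available. The only other step that is not essentially formal is the existence of a neighbourhood basis simultaneously convex normal and causally convex, but this is classical rather than new work.
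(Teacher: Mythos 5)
Your proof is correct and follows essentially the same strategy as the paper's: localise along $\gamma$ in convex normal neighbourhoods, use pointwise convergence at sample parameters, and pass to the limit in the local causal relation. The paper implements the limit step by pulling the endpoints back through $\exp^{-1}$ and taking a limit of causal tangent vectors, whereas you invoke closedness of $J^+_V$ in $V\times V$ directly; these are equivalent in a convex normal $V$. The one substantive difference is that you additionally require the neighbourhoods to be causally convex in $M$, precisely so that the segments $\gamma_i|_{[s_{k-1},s_k]}$ cannot leave $V_k$ and the relation $\gamma_i(s_k)\in J^+_{V_k}(\gamma_i(s_{k-1}))$ is genuinely the \emph{local} one. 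The paper's proof skips this: it asserts that since $\gamma_i(t_1),\gamma_i(t_2)\in U$ and $\gamma_i$ is causal, there is a future directed causal geodesic in $U$ joining them, but two points of a convex normal $U$ that are causally related in $M$ by a curve exiting $U$ need not be joined by a causal radial geodesic of $U$. Your insertion of causal convexity (a standard neighbourhood basis property) closes this gap, so your argument is, if anything, the more careful of the two; you also make explicit the transitivity chaining over a partition of $[t_1,t_2]$ that the paper leaves implicit. Your reading of the hypothesis as $\gamma_i(t)\to\gamma(t)$ is the intended one.
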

    \begin{proof}
      Let $t\in(a,b)$ and choose $U$ an open convex normal neighbourhood
      containing $\gamma(t)$. Since $U$ is open there exists
      $\epsilon>0$ so that $\gamma(t-\epsilon,t+\epsilon)\subset U$.
      Let $t_1,t_2\in(t-\epsilon,t+\epsilon)$, $t_1<t_2$.
      By restricting to a subsequence, if necessary we can assume that
      for all $i\in\N$, $\gamma_i(t_1),\gamma_i(t_2)\in U$.
   
      Since each $\gamma_i$ is future directed causal
      and as $U$ is convex normal, for each $i\in\N$ there exists a future directed
      causal geodesic in $U$ from $\gamma_i(t_1)$ to $\gamma_i(t_2)$.
      Let $v_i=\exp^{-1}_{\gamma_i(t_1)}(\gamma_i(t_2))$.
      Then by the assumption of pointwise convergence of $\gamma_i$ and joint
      continuity of $\exp^{-1}$, the sequence of tangent vectors
      $(v_i)_{i\in\N}$ converges to $v=\exp^{-1}_{\gamma(t_1)}(\gamma(t_2))$.

      Let the vector field $T$ define our time orientation.
      Since each $\gamma_i$ is future directed and timelike we have
      $g(T, v_i)\leq 0$ and $g(v_i,v_i)\leq 0$.
      Taking the limit with respect to $i$ shows that
      $g(T,v)\leq 0$ and $g(v,v)\leq 0$.
      Hence $v\in T_{\gamma(t_1)}M$ is future directed and causal.
      By construction $\exp_{\gamma(t_1)}(v)=\gamma(t_2)$.
      Thus as $U$ is convex normal, the unique geodesic between
      $\gamma(t_1)$ and $\gamma(t_2)$ is the curve
      $t\mapsto\exp_{\gamma(t_1)}(tv)$, which is future directed.
      Thus $\gamma(t_2)\geq \gamma(t_1)$ and so $\gamma$ is a continuous causal
      curve as required.
    \end{proof}

  \subsection{Lorentzian length control}
    \label{sec_llc}

    In this section we prove one result,
    Lemma
    \ref{lem_improved_length_bound},
    which gives sufficient conditions
    to know when control of Lorentzian length over compact subsets of the
    domain of the curves implies control of Lorentzian length over the entire
    domain of the curves.
    Application of
    Lemma
    \ref{lem_improved_length_bound}
    depends, in this paper, on a careful analysis of
    parametrisations of causal curves in Lorentzian manifolds
    and other additional strong hypotheses: see Section 
    \ref{sec_length_control}.
    After proving Lemma \ref{lem_improved_length_bound},
    we give our definition of convergence of curves in incomplete 
    metric spaces, Definition \ref{defn:limit-curve}, and then  present
    two examples illustrating what can go wrong.

    \begin{lemma}
      \label{lem_improved_length_bound}
      Let $(M,g)$ be a Lorentzian manifold, and fix $a\in\R^+$, finite.
      Let $d$ be a metric on $M$ which is compatible with the manifold
      topology.
      Let $(\gamma_i:[0,a)\to M)_i$ be a sequence of 
      causal curves which converge uniformly on compact subsets of
      $[0,a)$, with respect to $d$, to a causal curve $\gamma:[0,a)\to M$.
      If
      \begin{enumerate}[noitemsep]
        \item the Lorentzian arc-lengths $L(\gamma_i)$
          are all finite, the sequence $(L(\gamma_i))_i$ is bounded above,
          and
        \item there exists $b>0$ so that
          for all $i\in\N$ and all 
          $t_1,t_2\in[0,a)$
          \[
            L(\gamma_i|_{[t_1,t_2]})\leq b\abs{t_2 - t_1}
            \quad\mbox{and}
            \quad L(\gamma_i|_{[t_1,a)})\leq b\abs{a - t_1},
          \]
      \end{enumerate}
      then there exists a subsequence
      $(\gamma_{i_k})_k$ so that $L(\gamma)\geq\limsup_kL(\gamma_{i_k})$.
      Moreover, if for all $t\in[0,a)$,
      \[
        L(\gamma|_{[0,t]})=\lim_{k} L(\gamma_{i_k}|_{[0,t]})
      \]
      then $L(\gamma)=\lim_kL(\gamma_{i_k})$.
    \end{lemma}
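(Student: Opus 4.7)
The plan is to exploit the uniform Lorentzian length bound on tails (hypothesis (2)) to reduce the question to compact subintervals of $[0,a)$, where the classical upper semi-continuity result (Theorem \ref{thm_upper_semi_cts}) applies directly. The subsequence in the statement appears to be present for consistency with other limit curve theorems; since $\gamma_i \to \gamma$ uniformly on compact subsets by hypothesis, the argument works for the full sequence, so I will take $(\gamma_{i_k})_k = (\gamma_i)_i$.

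First I would establish the additivity identity
\[
L(\gamma_i) = L(\gamma_i|_{[0,t]}) + L(\gamma_i|_{[t,a)}),
\]
for any $t \in [0,a)$. This follows from definition (\ref{eq_lg_def}) by writing $L(\gamma_i)$ as the supremum of $L(\gamma_i|_{[0,s]})$ over $s \in [t,a)$ and using standard additivity of Lorentzian length on compact causal segments. Combined with hypothesis (2), this yields the key estimate $L(\gamma_i) \leq L(\gamma_i|_{[0,t]}) + b(a-t)$.

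Next, for each fixed $t \in [0,a)$, I would apply Theorem \ref{thm_upper_semi_cts} to the restrictions $\gamma_i|_{[0,t]}$, which converge uniformly with respect to $d$ to $\gamma|_{[0,t]}$ on the compact interval $[0,t]$, to obtain
\[
\limsup_i L(\gamma_i|_{[0,t]}) \leq L(\gamma|_{[0,t]}) \leq L(\gamma),
\]
where the last inequality is immediate from (\ref{eq_lg_def}). Passing to $\limsup$ in the estimate above then gives $\limsup_i L(\gamma_i) \leq L(\gamma) + b(a-t)$, and letting $t \to a^-$ produces the main conclusion $L(\gamma) \geq \limsup_i L(\gamma_i)$.

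For the moreover statement, I would use that additivity and nonnegativity of Lorentzian length for causal segments give $L(\gamma_{i_k}) \geq L(\gamma_{i_k}|_{[0,t]})$ for each $t \in [0,a)$. Taking $\liminf_k$ together with the given pointwise-in-$t$ length convergence yields $\liminf_k L(\gamma_{i_k}) \geq L(\gamma|_{[0,t]})$; letting $t \to a^-$ (using monotonicity of $t \mapsto L(\gamma|_{[0,t]})$ and (\ref{eq_lg_def})) and combining with the $\limsup$ bound produces $\lim_k L(\gamma_{i_k}) = L(\gamma)$. The main obstacle is a minor bookkeeping check: verifying that Theorem \ref{thm_upper_semi_cts} applies as stated to the restrictions $\gamma_i|_{[0,t]}$ and $\gamma|_{[0,t]}$, since the ``past directed inextendible'' phrasing in that theorem is a little peculiar on a compact interval; time duality together with Definition \ref{def_causal_curve} should resolve this.
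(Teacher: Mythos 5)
Your proof is correct, and it takes a genuinely more elementary route than the paper's. Both arguments rest on the same two ingredients: upper semi-continuity of $L$ on the compact subintervals $[0,t]$ (Theorem \ref{thm_upper_semi_cts}) and the uniform tail bound $L(\gamma_i|_{[t,a)})\leq b(a-t)$ from hypothesis (2). The paper, however, packages the partial lengths as functions $f(t,k)=L(\gamma_{i_k}|_{[0,t]})$, proves they are uniformly Lipschitz (hence equicontinuous), extracts a uniformly convergent subsequence via Arzel\'a--Ascoli, and then invokes Moore--Osgood to interchange $\lim_{t\to a}$ with $\lim_k$. You observe that the very same Lipschitz bound already gives $0\leq L(\gamma_i)-L(\gamma_i|_{[0,t]})\leq b(a-t)$ uniformly in $i$, so the interchange is an explicit $\epsilon$-argument: $\limsup_i L(\gamma_i)\leq \limsup_i L(\gamma_i|_{[0,t]})+b(a-t)\leq L(\gamma)+b(a-t)$ for every $t<a$. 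This avoids both compactness theorems, requires no subsequence extraction (you prove the inequality for the full sequence, which is strictly stronger than the stated conclusion), and your treatment of the ``moreover'' clause via $\liminf_k L(\gamma_{i_k})\geq L(\gamma|_{[0,t]})$ is likewise cleaner than the paper's appeal back to Equation \eqref{eq:two-ineqs}. The only price is that your argument leans on the specifically linear form of the tail bound, whereas the paper's equicontinuity route would survive under a weaker, merely uniform modulus of continuity for $t\mapsto L(\gamma_i|_{[0,t]})$ near $t=a$; under the hypotheses as actually stated this generality is not needed. Your closing caveat about the phrasing of Theorem \ref{thm_upper_semi_cts} (``past directed inextendible'' on a compact interval) is shared by the paper's own proof, which applies that theorem to the same restrictions $\gamma_{i}|_{[0,t]}$, so it is not a defect of your argument relative to theirs.
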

    \begin{proof}
      We begin by noting that $(L(\gamma_i))_i$ is a bounded sequence
      in $\R^+$. We can therefore choose
      a subsequence $(L(\gamma_{i_k}))_k$ so that
      $\lim_kL(\gamma_{i_k}) = \limsup_i L(\gamma_{i})$.

      We can define a function
      $f:[0,a)\times\N\to\R^+$ by
      $f(t,k)=L(\gamma_{i_k}|_{[0,t]})$. 
      By definition of the Lorentzian length,
      Equation \eqref{eq_lg_def},
      $L(\gamma_{i_k}|_{[0,t]})\to L(\gamma_{i_k})$
      as $t\to a$. Therefore
      we can extend $f$ to the domain $[0,a]\times\N$
      by defining $f(a,k)=L(\gamma_{i_k})$ and ensure that, for
      fixed $k$, $f(\cdot, k)$ is continuous.

      Choose $\epsilon>0$. 
      We need to check three cases. 
      For all
      $t,s\in[0,a)$, and all $k\in\N$
      if $0<t-s<\epsilon/b$ then
      \[
        {f(t,k) - f(s,k)} 
        = {L(\gamma_{i_k}|_{[0,t]}) - L(\gamma_{i_k}|_{[0,s]})}
        = {L(\gamma_{i_k}|_{[s,t]})}
        \leq {b(t-s)}<\epsilon.
      \]
      A similar calculation can be performed if $s>t$ to show that
      if $\abs{t-s}<\epsilon/b$ then
      $\abs{f(t,k) - f(s,k)} \leq \epsilon$.
      Further, for all $t\in[0,a]$ and all $k\in\N$
      if $0<a-t<\epsilon / b$ then
      \[
        {f(a,k) - f(t,k)} 
        = {L(\gamma_{i_k}|_{[0,a)}) - L(\gamma_{i_k}|_{[0,t]})}
        = {L(\gamma_{i_k}|_{[t,a)})}
        \leq {b(a-t)}<\epsilon.
      \]
      That is, for all $t,s\in[0,a]$ and all $k\in\N$,
      $\abs{t-s}<\epsilon/ b$ implies that
      $\abs{f(t,k) - f(s,k)}<\epsilon$.

      Thus the family of functions $f(\cdot, k)$ is uniformly bounded
      and equicontinuous. 
      Arzel\'a-Ascoli, \cite[Theorem XII.6.4]{dugundij1966top},
      therefore implies that there exists 
      a function $f:[0,a]\to \R^+$ and a subsequence
      $(f(\cdot, k_j))_{j\in\N}$ 
      which converges to $f$ uniformly on compact subsets of $[0,a]$,
      and hence on $[0,a]$ (this was the point of assuming $a<\infty$). 
      The upper semicontinuity of the Lorentzian length function $L$
      and Theorem \ref{thm_upper_semi_cts} tell us that 
      \[
        L(\gamma) = \lim_{t\to a}L(\gamma|_{[0,t]})
        \geq \lim_{t\to a}\limsup_{j} L(\gamma_{i_{k_j}}|_{[0,t]})
        =\lim_{t\to a}\limsup_{j}  f(t, k_j)=\lim_{t\to a}\lim_{j}  f(t, k_j).
      \]

      Since $f(\cdot, k)\to f(\cdot)$ uniformly on $[0,a]$ we can apply the
      Moore-Osgood theorem, \cite[Theorem 7.11]{rudin1976principles}
      or \cite[Theorem VII.2, page 100]{graves2012theory}.
      That is, we can interchange the limits and compute that
      \begin{align}
        L(\gamma) = \lim_{t\to a}L(\gamma|_{[0,t]})
          &\geq \lim_{t\to a}\limsup_{j} L(\gamma_{i_{k_j}}|_{[0,t]})\nonumber\\
          &= \lim_{t\to a}\lim_{j} f(t, k_j)\nonumber\\
          &= \lim_{j} \lim_{t\to a}f(t, k_j)\nonumber\\
          &= \lim_{j} L(\gamma_{i_{k_j}})
           = \lim_{k} L(\gamma_{i_k})
           = \limsup_{i} L(\gamma_{i}).
           \label{eq:two-ineqs}
      \end{align}
      If $L(\gamma|_{[0,t]})=\lim_kL(\gamma_{i_k}|_{[0,t]})$ then
      it is clear that the inequality in Equation \eqref{eq:two-ineqs} is
      an equality.
      Thus the result holds.
    \end{proof}

    Lemma \ref{lem_improved_length_bound} gives conditions under which the 
    Lorentzian length function is upper semi-continuous on non-compact inextendible
    causal curves.
    To use a result like
    Lemma \ref{lem_improved_length_bound}, one would typically select
    a suitable sequence of causal curves and then control the length of the limit
    curve given by Theorem \ref{thm_trad_lim_curve}.

    Theorem \ref{thm_trad_lim_curve}
    requires the curves in 
    the sequence to be $1$-Lipschitz parametrised with respect to the
    chosen complete distance on $M$. 
    Hence to
    achieve the conditions of
    Lemma 
    \ref{lem_improved_length_bound}, the chosen distance
    must be tied to the
    Lorentzian geometry: the null distance
    \cite{sormani2016null} of a suitable time function provides such
    a distance. That said, the null distance of a suitable time function has no
    guarantee to be complete. 

    In the remainder of this paper we 
    prove a generalisation of
    Theorem \ref{thm_trad_lim_curve} for distances that are not complete
    and then show that the null distance associated to various time functions
    allows us to use 
    Lemma \ref{lem_improved_length_bound}.

  \subsection{What goes wrong in incomplete spaces}
    \label{sec_what_goes_wrong}
    
    In this section we present two examples that demonstrate how,
    for a sequence $(\gamma_i)_i$, of
    causal curves, that uniformly converge
    to a causal curve $\gamma:\R^+\to M$ it is possible for
    \[
      L(\gamma)<\limsup_iL(\gamma_i)
    \]
    despite Theorem \ref{thm_upper_semi_cts} which
    shows that on compact subsets $K\subset \R^+$ we have
    \[
      L(\gamma|_K)\geq\limsup_iL(\gamma_i|_K).
    \]

    Ultimately the problem stems from needing to choose an auxiliary metric on
    the manifold to define convergence, as the Lorentzian distance is not
    suitable for this purpose. Having chosen a distance in order to define
    uniform convergence, the parametrisation induced
    by the distance is typically not at all
    related to the Lorentzian 
    length functional. For example, the usual method is to choose a complete
    Riemannian metric. In order to ensure an appropriate relation between the
    distance and the Lorentzian length, we typically lose completeness, and so
    it is
    necessary to allow the metric space $(M,d)$ to be incomplete.
    This causes a secondary problem: what does it mean for curves to converge in an
    incomplete metric space?
    
    \begin{figure}[h]
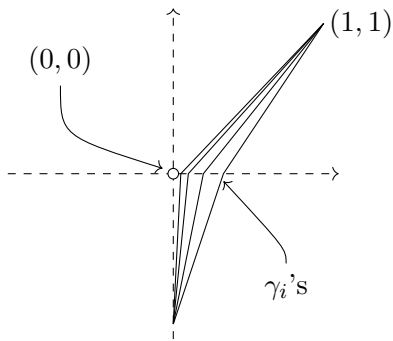

      \centering
      \tikz{
        \draw[dashed] (0,-2.2)--(0, -2pt);
        \draw[dashed, ->] (0,2pt)--(0, 2.2);
        \draw[dashed] (-2.2, 0)--(-2pt, 0);
        \draw[dashed, ->] (2pt, 0)--(2.2, 0);

        \draw (0,0) circle (2pt);

        \foreach \i in {3, 5, 10, 20}
          \draw (2,2) -- (2/\i, 0) -- (0,-2);

        \node(O) at (-1.5, 1.5) {$(0,0)$};
        \draw[->] (O) .. controls (-1.5,0.5) .. (-4pt,2pt);
        \node(G) at (1.5, -1.5) {$\gamma_i$'s};
        \draw[->] (G) .. controls (1.5, -1) .. (2/3, -0.1);
        \node at (2.5,2) {$(1,1)$}
      }
      \caption{An illustration of Example \ref{ex_convergence_in_incomplete}.
        The circle represents the removed origin in $\R^2$. The dashed lines
        represent the $x$ and $y$ axis. The remaining curves
        present the images of three of the $\gamma_i$'s. The
        intersection of these curves with the $x$ axis tends to the
        removed origin as $i\to\infty$.}
      \label{fig_minkow_hole}
    \end{figure}

    Figure \ref{fig_minkow_hole} illustrates the problems we face when defining
    what we mean by a limit curve in an incomplete space. 
    The figure illustrates a sequence of past directed causal curves
    $\gamma_i:[0,2]\to \R^2\setminus\{(0,0)\}$ whose images are two straight lines 
    between the points $(1,1)$, $(1/i, 0)$ and $(0,1)$.
    Whilst it is
    intuitively clear that the sequence of curves converges pointwise to a
    ``disconnected curve'', 
    \[
      t\mapsto \left\{\begin{aligned}
        &(1,1) - t(1,1),&& t\in [0,1),\\
        &(0,0) - (t-1)(0,1), && t\in (1,2],
      \end{aligned}\right.
    \]
    this will not do as a basis for the definition of
    limit curve.

    Our definition of curve requires a 
    connected domain (and so image). As well as
    satisfying an intuitive notion of curve, this requirement is essential for
    discussion of causality: {\em causal curves must be traversable}.

    Thus if we take the curves in Figure \ref{fig_minkow_hole} to start at the
    top right, the point $(1,1)$, and be past directed, 
    the best we can hope for is that the ``top
    half'' of the curves converge to the ``top half'' of the intuitive limit
    curve. 
    The second halves of the limit curves
    will not be said to be converging to a limit curve.
    They ``fall off'' the limit.
    For us to talk about
    a limit curve requires specification of a point which is the start of
    the limit curve.
    In this case the point is $(1,1)$.
    We present the details in Example \ref{ex_convergence_in_incomplete}.

    \begin{definition}
      \label{defn:limit-curve}
      Let ${M}$ be a manifold, and
      $d:{M}\times{M}\to\mathbb{R^+}$ a metric on $M$.
      Let  $\gamma_i:[0,a_i)\to{M}$, $a_i\in\R^+\cup\{\infty\}$, be a sequence of
      $C^0$ curves in ${M}$ with $\gamma_i(0)\to x\in M$.
      We say that $(\gamma_i)$ $d$-converges uniformly on compacta to
      the continuous curve
      $\gamma:[0,a)\to M$ if $\gamma(0)=x$ and for all $\delta<a$, 
      there exists $N_\delta\in\N$ so that
      $i\geq N_\delta$ implies that
      $a_i>\delta$ and
      $(\gamma_i|_{[0,\delta]})_{i\geq N_\delta}$ converges 
      uniformly to $\gamma|_{[0,\delta]}:[0,\delta]\to M$
      with respect to the distance $d$.
    \end{definition}

    If $\gamma:[0,a)\to M$ is a limit curve then for
    all $t\in[0,a)$ we have that
    $\gamma|_{[0,t)}:[0,t)\to M$ is also a limit curve.
    That is, sub-curves of a limit curve are also limit curves.
    In this sense limit curves are non-unique. Uniqueness can be
    achieved 
    by taking the maximal limit curve, whose graph is the union of
    the graphs of all possible limit curves. In practice, we will
    work with the maximal curve.

    \begin{example}
      \label{ex_convergence_in_incomplete}
      Let $M=\R^2\setminus\{(0,0)\}$ have the
      metric $dx^2-dy^2$, where $x,y$ are the standard
      coordinates on $M$ induced by inclusion into $\R^2$.
      For $i\geq 2$, let $\gamma_i:[0,2]\to M$ be defined by
      \[
        \gamma_i(t)=\left\{\begin{aligned}
          &(1, 1) - t(1-1/i, 1), && t\in [0,1)\\
          &(1/i, 0) - (t-1)(1/i, 1), && t\in [1,2].
        \end{aligned}\right\}
      \]
      See Figure \ref{fig_minkow_hole} for a diagram representing 
      $M$ and the $\gamma_i$'s.
      Each $\gamma_i$ is a $C^0$ piecewise smooth causal curve, and
      direct calculation shows that the length of $\gamma_i$ defined by the
      Lorentzian metric is
      $L(\gamma_i)=\sqrt{1 - 1/i^2} + \sqrt{1-(1-1/i)^2 }$.
      Thus $L(\gamma_i)\to 1$.
      According to Definition \ref{defn:limit-curve}, 
      the maximal limit curve from the point $(1,1)$
      is $\gamma:[0,1)\to M$ defined by
      $\gamma(t)=(1, 1) - t(1,1)$.
      Direct calculation gives $L(\gamma)=0$. 
      
      Thus, in particular, $L(\gamma)<\limsup_i L(\gamma_i)$.
      Since the Lorentzian arc length is invariant to changes in parametrisation
      no amount of fiddling with parametrisation or the distance used to 
      to define the limit curve will result in $L(\gamma)=\limsup_i L(\gamma_i)$.
      The situation described in this example can be excluded
      by assuming that $M$ is globally hyperbolic, since the problems stem from
      the set
      $\jfuture{(0,-1)}\cap\jpast{(1,1)}$ not being compact.
      \qedhere
    \end{example}

    \begin{example}
      \label{ex_convergence_in_incomplete_complete_riemanna}
      Continuing Example \ref{ex_convergence_in_incomplete},
      we show what we need to do to the $\gamma_i$'s to apply
      Theorem \ref{thm_trad_lim_curve} and how this relates to
      the conditions of
      Lemma \ref{lem_improved_length_bound}.
      Let $h=\frac{1}{(x^2 + y^2)^2}(dx^2 + dy^2)$. This is a complete Riemannian
      metric on $M$.

      Classical limit curve theorems use a distance induced
      by a complete Riemannian metric such as $h$, and assume that
      the curves they apply to are inextendible. Doing so
      ensures that the inextendible curves being considered will have domains
      $\R^+$ when re-parametrised to be $1$-Lipschitz
      with respect to $d(\cdot, \cdot; h)$.
      To compare with classical limit theorem statements, we now construct
      inextendible extensions
      of the
      $\gamma_i$. We construct the extensions by adding a null curve, showing that
      the problem with the limits of Lorentzian lengths does not arise from the
      extension process.

      Let $\lambda:[2,\infty)\to M$ be given by
      $\lambda(t)=(0, -1) - (t-2)(1,1)$.
      We now extend each $\gamma_i$ to be a past directed, inextendible
      curve by appending $\lambda$. That is, our extension is the curve
      given by
      \[
        t\mapsto \left\{\begin{aligned}
          &\gamma_i(t), && t\in [0,2),\\
          &\lambda(t), && t\in[2,\infty).
        \end{aligned}\right.
      \]
      We shall denoted these extensions by 
      $\gamma_i:\R^+\to M$.
      The limit curve $\gamma$ described in Example
      \ref{ex_convergence_in_incomplete} is already inextendible.

      We now compute the $h$ arc-length parametrisations
      of our inextendible $\gamma_i$'s and $\gamma$.
      Let $s:[0,1)\to\R^+$ be defined by
      \[
        s(t) = L(\gamma|_{[0,t]};h)=\int_0^t\frac{1}{\sqrt{2}(1-\tau)^2}\dd\, \tau
        =\frac{1}{\sqrt{2}}\left(\frac{t}{1-t}\right).
      \]
      Note that $s(t)\to\infty$ as $t\to 1$.
      We can define $t:\R^+\to[0,1)$ by, $s(t(u))=u$.
      That is,
      $
        t(s) = \frac{\sqrt{2}s}{1 + \sqrt{2}s}
      $
      so that $\gamma\circ t:\R^+\to M$ is an $h$ arc-length parametrised
      curve.

      The curves $\gamma_i:\R^+\to M$ are inextendible and therefore
      $L(\gamma_i;h)=\infty$.
      Thus, for each $i\in\N$ define
      a function $s_i:\R^+\to \R^+$,  
      by $s_i(t)=L(\gamma_i|_{[0,t)};h)$.
      That is, 
      \begin{equation*}
        \label{eq_harclengthone}
        s_i(t)
        =
          \scaleint{40pt}_{\!\!\!\!\!\!0}^t{%
          \frac{\sqrt{\left(1 - \frac{1}{i}\right)^2 + 1}}%
          {\left(1-\tau\left(1-\frac{1}{i}\right)\right)^2 
            + (1-\tau)^2}\,
            \dd\,\tau},
      \end{equation*}
      for $t\in [0,1)$, and 
      \begin{equation*}
        \label{eq_harclengthtwo}
        s_i(t) = \lim_{t\to 1}s_i(t)
          +
          \scaleint{40pt}_{\!\!\!\!\!\!1}^{t}{%
          \frac{\sqrt{\left(\frac{1}{i}\right)^2 + 1}}%
          {\left(\frac{2-\tau}{i}\right)^2 
          + (\tau-1)^2}\,
            \dd\,\tau}
      \end{equation*}
      for $t\in [1,2)$
      and
      \begin{equation*}
        \label{eq_harclengththree}
        s_i(t) = \lim_{t\to 2}s_i(t)
          +
          \scaleint{40pt}_{\!\!\!\!\!\!2}^{t}{%
          \frac{\sqrt{2}}%
          {\left(\tau^2 \right)^2 
          + (\tau - 2)^2}\,
            \dd\,\tau}
      \end{equation*}
      for $t\in[2,\infty)$. 
      We can now define 
      $t_i:\R^+\to\R^+$ by  $s_i(t_i(u))=u$.

      All curves $\gamma\circ t$ and $\gamma_i\circ t_i$ now have the
      same domain $\R^+$ and are such that for all $u_1,u_2\in\R^+$
      $d(\gamma\circ t(u_1),\gamma\circ t(u_2); h)\leq\abs{u_1-u_2}$ and
      $d(\gamma_i\circ t_i(u_1),\gamma_i\circ t_i(u_2); h)\leq\abs{u_1-u_2}$.

      Since 
      \[
        L(\gamma\circ t)=0<1=\limsup_iL(\gamma_i\circ t_i)
      \]
      we know that Lemma
      \ref{lem_improved_length_bound} does not hold for the
      sequence $(\gamma_i\circ t_i)$ and the limit curve $\gamma\circ t$.
      We now investigate the conditions of Lemma
      \ref{lem_improved_length_bound} to determine which fails.
      Since all curves, $\gamma$ and $\gamma_i$, have domain $\R^+$
      and 
      Lemma
      \ref{lem_improved_length_bound} requires our curves to have a domain
      with compact closure in $\R^+$ we shall first need to re-parametrise.
      For each $i\in\N$ we consider $\gamma_i\circ t_i\circ \tan:[0,\pi/2)\to M$
      and $\gamma\circ t\circ \tan:[0,\pi/2)\to M$.
      We immediately see that 
      the sequence $(\gamma_i\circ t_i\circ \tan)_i$ converges
      uniformly to 
      $\gamma\circ t\circ \tan$.
      It is also clear that
      $L(\gamma_i\circ t_i\circ\tan)$
      is finite for all $i\in\N$ and the sequence $(L(\gamma_i\circ
      t_i\circ\tan))_i$
      is bounded above. 

      The sequence $(s_i(1))_i$ is increasing with limit $\infty$.
      In particular, $(\arctan\circ s_i(1))_i$ is increasing with
      limit $\pi/2$.
      This implies that
      \[
        1=L(\gamma_i\circ t_i\circ\tan|_{[\arctan(s_i(1)),\pi/2)})
        \leq b \left(\frac{\pi}{2}-\arctan(s_i(1))\right).
      \]
      While the left hand side is constant the right hand side
      tends to $0$ as $i\to\infty$.
      This violates the second listed assumption of
      Lemma \ref{lem_improved_length_bound}.
    \end{example}
  
\section{The limit curve theorem for incomplete metric spaces}
  \label{sec_general_limit_curve}

  In this section we generalise Theorem \ref{thm_trad_lim_curve} to
  incomplete metric spaces. 
  We begin by considering the limit curve theorem
  in a compact subset of $M$. First we prove a lemma that will help us
  re-parametrise our curves.

  \begin{lemma}
    \label{lem_uni_for_repara}
    For each $i\in\N$ let $a_i\in\R^+\cup\{\infty\}$
    and define 
    \[
      Y_i=\left\{\begin{aligned}
        &[0,a_i], && a_i<\infty,\\
        &[0,a_i), && a_i=\infty.
      \end{aligned}\right.
    \]
    Let
    $a=\limsup_ia_i$ and define
    \[
      X=\left\{\begin{aligned}
        &[0,a], && a<\infty,\\
        &[0,a), && a=\infty.
      \end{aligned}\right.
    \]
    For each $i\in\N$ define
    $f_i:X\to Y_i$ a bijective, continuous, increasing
    function by
    \[
      f_i(x)=\left\{
      \begin{aligned}
        &x, && a_i=a=\infty,\\
        &\frac{2a_i}{\pi}\arctan\left(\frac{\pi}{2a_i}x\right),
          && a=\infty,\, a_i<\infty,\\
        &\frac{a_i}{a}x, && a_i,\, a<\infty.
      \end{aligned}
      \right.
    \]
    For all $\epsilon>0$ there exists a subsequence
    $(a_{i_k})_k$ of $(a_i)_i$ such that for all
    $k\in\N$, $a_{i_k}\leq a+\epsilon$ and so
    $\sup\{f_{i_k}'(t):t\in X\}\leq 1+\epsilon$.
  \end{lemma}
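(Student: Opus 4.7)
The plan is to split according to whether $a$ is finite or infinite and, in each case, read off the required bounds directly from the piecewise formula for $f_i$ combined with the definition of $\limsup$.

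First I would handle the case $a = \infty$. If $a_i = \infty$ then $f_i$ is the identity, so $f_i' \equiv 1$. If $a_i < \infty$, a direct differentiation of the arctan formula gives $f_i'(x) = (1 + (\pi x/(2 a_i))^2)^{-1} \leq 1$. Thus \emph{every} $f_i$ satisfies $\sup_{t \in X} f_i'(t) \leq 1 \leq 1 + \epsilon$, and the bound $a_i \leq a + \epsilon = \infty$ is automatic. So any infinite subsequence, the full sequence in particular, serves as $(a_{i_k})_k$.

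The substantive case is $a < \infty$. Here the third piece of the definition of $f_i$ gives $f_i'(x) = a_i/a$ (constant in $x$), so the bound $a_i \leq a + \epsilon$ would only deliver $f_i' \leq 1 + \epsilon/a$. To land on $1 + \epsilon$ exactly I would introduce an auxiliary $\epsilon' = \min(\epsilon, a\epsilon) > 0$ before invoking $\limsup$. Using $\limsup_i a_i = \inf_N \sup_{i \geq N} a_i = a$, there exists $N \in \N$ with $\sup_{i \geq N} a_i \leq a + \epsilon'$. Taking the tail $(a_{i_k})_k = (a_{N+k-1})_k$ yields $a_{i_k} \leq a + \epsilon' \leq a + \epsilon$; each $a_{i_k}$ is also finite because $a + \epsilon' < \infty$, placing us squarely in the third piece of the piecewise definition of $f_{i_k}$. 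Therefore $f_{i_k}'(t) = a_{i_k}/a \leq 1 + \epsilon'/a \leq 1 + \epsilon$.

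Two degenerate situations deserve a brief mention. If $a = 0$ then $X = \{0\}$ and the conclusion holds trivially. The case $a_i = \infty$ with $a < \infty$, for which no formula is assigned to $f_i$, is sidestepped by the tail choice above, since $a + \epsilon' < \infty$ forces $a_i < \infty$ for all $i \geq N$. There is no real obstacle; the only piece of care required is the bookkeeping in the finite-$a$ case, where the choice $\epsilon' = \min(\epsilon, a\epsilon)$ is what makes both target inequalities hold simultaneously for the \emph{same} subsequence.
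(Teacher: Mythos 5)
Your proof is correct and follows essentially the same route as the paper's: use the definition of $\limsup$ to pass to a tail subsequence with $a_{i_k}\leq a+\epsilon$, then bound $f_{i_k}'$ case by case from the piecewise formula. Your auxiliary $\epsilon'=\min(\epsilon,a\epsilon)$ is in fact slightly more careful than the paper's own argument, which in the finite case only yields $a_{i_k}/a\leq 1+\epsilon/a$ and thus implicitly needs $a\geq 1$ to conclude the stated bound $1+\epsilon$.
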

  \begin{proof} Let $\epsilon>0$.
    Since $a=\limsup_ia_i$ there exists a subsequence
    $(a_{i_k})_k$ so that $a_{i_k}\to a$. Consequently,
    there exists $N$ big enough so that the subsequence
    $(a_{i_{k+N}})_{k+N}$ is such that for all $k\in\N$,
    $a_{i_{k+N}}\leq a + \epsilon$. 

    Passing to such a subsequence if necessary, for sufficiently large $i$ we have
    \[
      0<f_i'(x)=\left\{
      \begin{aligned}
        &1, && a_i=a=\infty,\\
        &\frac{1}{1+\left(\frac{\pi}{2a_i}x\right)^2},
          && a=\infty,\, a_i<\infty,\\
        &\frac{a_i}{a}, && a_i,\, a<\infty.
      \end{aligned}
      \right.
      \leq
      \left\{
      \begin{aligned}
        &1, && a_i=a=\infty,\\
        &1, && a=\infty,\, a_i<\infty,\\
        &1 + \epsilon, && a_i,\, a<\infty.
      \end{aligned}
      \right.
    \]
    Thus the result holds.
  \end{proof}

  The following lemma is a ``local'' version of Theorem
  \ref{thm_trad_lim_curve}. In Theorem 
  \ref{thm_CurveUniformConvergenceInBoundedRegion_extension},
  below, we show how to use Proposition
  \ref{prop_CurveUniformConvergenceInBoundedRegion}
  to prove a global version of the limit curve theorem
  for incomplete distances. 
  In this local version, 
  we manage incompleteness by
  working in a compact set.

  \begin{proposition}
    \label{prop_CurveUniformConvergenceInBoundedRegion}
    Let ${M}$ be a manifold, let
    $d:{M}\times{M}\to\mathbb{R}$ be a distance on $M$ compatible with
    the manifold topology, and
    let $B$ be a compact subset of $M$. 
    Let $\gamma_i:Y_i=[0,a_i)\to{M}$, $a_i\in\R^+\cup\{\infty\}$, be a sequence of
    $C^0$ curves in ${M}$ so that for some $N\in\N$,
    $n>N$ implies that $\gamma_n\subset B$.
    Let the functions
    $f_i:X\to Y_i$ be constructed as in Lemma
    \ref{lem_uni_for_repara}. 
    If for all $i\in\N$ and for all $t_1,t_2\in I_i$,
    \[
      d(\gamma_i(t_1),\gamma_i(t_2))\leq |t_1-t_2|.
    \]
    then there is a subsequence $(\gamma_{i_k}\circ f_{i_k})_k$ of
    $(\gamma_i\circ f_i)_{i}$ which
    \begin{enumerate}
      \item converges uniformly 
        on compact subsets of $X$ to a $C^0$ curve
        $\gamma:X\to B$ with respect to $d$, and
      \item is such that $\lim_ka_{i_k}=\limsup_{i}{a_i}$. 
    \end{enumerate}
  \end{proposition}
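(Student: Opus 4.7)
The plan is to reduce to a standard Arzel\`a--Ascoli argument by first reparametrising the curves to share a common domain, using Lemma~\ref{lem_uni_for_repara}. Item (2) of the conclusion is essentially contained in Lemma~\ref{lem_uni_for_repara}: from $a=\limsup_i a_i$ we may pass to a subsequence $(a_{i_k})_k$ with $a_{i_k}\to a$. We may then further refine to a subsequence (still denoted $(a_{i_k})_k$) along which the bounds of Lemma~\ref{lem_uni_for_repara} apply for some fixed $\epsilon>0$ (say $\epsilon=1$), so that $\sup_{t\in X}f_{i_k}'(t)\leq 2$.

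Define $\widetilde\gamma_k:=\gamma_{i_k}\circ f_{i_k}:X\to M$. The key observation is that this family is uniformly equicontinuous with respect to $d$: for $t_1,t_2\in X$, since $f_{i_k}$ is increasing, the mean value theorem and the $1$-Lipschitz hypothesis on $\gamma_{i_k}$ give
\[
  d\bigl(\widetilde\gamma_k(t_1),\widetilde\gamma_k(t_2)\bigr)
  \leq \bigl|f_{i_k}(t_1)-f_{i_k}(t_2)\bigr|
  \leq 2\,|t_1-t_2|.
\]
Because reparametrisation does not change the image of a curve, and $\gamma_n\subset B$ for all $n>N$, the tail of the sequence $(\widetilde\gamma_k)_k$ takes values in the compact set $B$. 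Hence the family is pointwise precompact in $B$ and uniformly equicontinuous on $X$.

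When $a<\infty$, the space $X=[0,a]$ is compact and a direct application of Arzel\`a--Ascoli (\cite[Theorem XII.6.4]{dugundij1966top}) produces a further subsequence converging uniformly on $X$ to a continuous function $\gamma:X\to M$. When $a=\infty$, so that $X=[0,\infty)$, I would apply Arzel\`a--Ascoli on each compact interval $[0,n]\subset X$ and run the standard diagonal argument to extract a subsequence that converges uniformly on every compact subset of $X$ to a continuous $\gamma:X\to M$. In either case the uniform limit on each compact subset lies in $B$ because $B$ is closed (being compact in the Hausdorff space $M$), so $\gamma:X\to B$ as required. Both item (1) and item (2) then hold for the resulting subsequence, which is a subsequence of the original $(\gamma_i\circ f_i)_i$.

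The only genuinely delicate point is the non-compact case $a=\infty$: one must be careful that the diagonal extraction is compatible with preserving the property $a_{i_k}\to a=\infty$, but since every subsequence of a sequence that diverges to $\infty$ still diverges to $\infty$, the diagonal subsequence retains item~(2). The rest of the argument is the routine Arzel\`a--Ascoli machinery, made possible by the fact that Lemma~\ref{lem_uni_for_repara} converts the differing-domain Lipschitz hypothesis into a uniform Lipschitz estimate on the common domain $X$.
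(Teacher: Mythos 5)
Your proposal is correct and follows essentially the same route as the paper: pass to a subsequence with $a_{i_k}\to a=\limsup_i a_i$, use Lemma \ref{lem_uni_for_repara} to get a uniform Lipschitz bound for the reparametrised curves $\gamma_{i_k}\circ f_{i_k}$ on the common domain $X$, and apply Arzel\`a--Ascoli with pointwise precompactness supplied by $B$. The only cosmetic difference is that you carry out the diagonal extraction for non-compact $X$ by hand, whereas the paper cites a version of Arzel\`a--Ascoli that yields uniform convergence on compacta directly.
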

  \begin{proof}
    Let $\epsilon>0$.
    Since $a=\limsup\{a_i:i\in\N\}$, for all sufficiently large
    $i\in\N$ we have $a_i< a+\epsilon$.
    Thus there exists
    a subsequence $(a_{i_k})_k$ so that $\lim_ka_{i_k}=a$
    and for all $k\in\N$, $a_{i_k}< a + \epsilon$.
    Without loss of generality we assume that $i_k=k$.
    That is, we assume that, $\lim_ia_i=a$ and
    for all $i\in\N$ $a_i<a+\epsilon$.
    Lemma \ref{lem_uni_for_repara} shows that 
    $\sup\{f_i'(t):t\in X\}\leq 1 + \epsilon$. 
    
    By assumption for all $i\in\N$ and all $t_1,t_2\in I_i$ we know that  
    $d(\gamma_i(t_1),\gamma_i(t_2))\leq|t_1-t_2|$. 
    Thus for all $t_1,t_2\in X$ we know that 
    \begin{align*}
        d(\gamma_i\circ f_i(t_1),\gamma_i\circ f_i(t_2))
        \leq \abs{f_i(t_1)-f_i(t_2)}
        \leq (1+\epsilon)\abs{t_1-t_2}.
    \end{align*}
    Thus
    $\{\gamma_i\circ f_i:i\in\N\}$ is
    uniformly equicontinuous.
    By assumption, for all sufficiently large $i\in\N$ we have
    $\gamma_i\subset B$.
    Thus
    $\{\gamma_i\circ f_i(t):i\in\N\}$ has 
    compact closure for each $t\in X$. 

    Arzel\`a and Ascoli's theorem \cite[Theorem XII.7.6.4 and
    Theorem XII.7.7.2]{dugundij1966top}, 
    implies that 
    there exists some $C^0$ curve $\gamma:X\to{M}$ such
    that there is a subsequence of $({\gamma}_i\circ f_i)_{i\in\N}$ which
    converges uniformly to $\gamma$ on compact subsets of $X$
    with respect to $d$.  
    Since, for $i\in\N$ large enough we know that $\gamma_i\subset B$,
    and as $\gamma(t)$ is the pointwise limit of some subset of
    $(\gamma_i\circ f_i(t))_i$, we see that $\gamma\subset B$.
    Thus the result holds.
  \end{proof}
  The first part of Proposition \ref{prop_CurveUniformConvergenceInBoundedRegion}
  is not a new result: similar ideas are used in 
  \cite[Theorem 3.7]{kunzinger2018lorentzian}. For Lorentzian length control we need
  to understand the relationship between $\limsup_ia_i$, $\lim_ka_{i_k}$
  and $a$. This part of 
  Proposition \ref{prop_CurveUniformConvergenceInBoundedRegion} is new.

  We can patch the limit curves on intersecting compact subsets together.
  When we do so, the ``second half'' of the limit curve may
  ``fall off'' as in Example \ref{ex_convergence_in_incomplete}.
  This occurs as the patching
  mechanism restricts us to working on the traversable portion of the
  limit curve from its assumed starting point.

  \begin{theorem}[The limit curve theorem]
    \label{thm_CurveUniformConvergenceInBoundedRegion_extension}
    Let ${M}$ be a manifold and let
    $d:{M}\times{M}\to\mathbb{R}$ be a distance on $M$ whose topology agrees with
    that of $M$.
    Let $(\gamma_i:[0,a_i)\to{M})_{i\in\N}$,
    $a_i\in\R^+\cup\{\infty\}$, be a sequence of
    $C^0$ curves in ${M}$.
    If there exists $x\in M$ so that
    \[
      \gamma_i(0)\to x\in M
    \]
    and if, for all $i\in \N$ and for all $t_1,t_2\in [0,a_i)$,
    \[
      d(\gamma_i(t_1),\gamma_i(t_2))\leq \abs{t_1-t_2},
    \]
    then there exists a $C^0$ curve 
    $\gamma:[0,a)\to M$, $a\in(0,\limsup_ia_i]$,
    with $\gamma(0)=x$, 
    and a subsequence $(\gamma_{i_k})_k$ of 
    $(\gamma_i)_i$ so that 
    \begin{enumerate}
      \item $a\leq \lim\sup_ka_{i_k}$, and
      \item for all compact $C\subset [0,a)$ there
        exists $J\in\N$ so that $k\geq J$ implies that
        $C\subset [0,a_{i_k})$
        and
        $(\gamma_{i_k}|_C)_{k\geq J}$ converges uniformly
        to $\gamma|_C$ with respect to $d$. That is, $(\gamma_{i_k})_k$
        $d$-converges uniformly to $\gamma$ on compacta, as in Definition
        \ref{defn:limit-curve}.
    \end{enumerate}
  \end{theorem}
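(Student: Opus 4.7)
The plan is to apply Proposition \ref{prop_CurveUniformConvergenceInBoundedRegion} along a compact exhaustion of $M$ using first exit times, then patch the resulting limit curves together via a diagonal argument. The domain of the final limit curve may be strictly smaller than $\limsup_i a_i$ precisely because the curves $\gamma_i$ may be leaving every compact set in a way that the ``obvious'' continuation escapes through an incompleteness of $(M,d)$ and so cannot be recovered from the patching procedure.

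First I would fix a compact exhaustion $K_1\subset K_2\subset\cdots$ of $M$ with $K_n\subset\interior{K_{n+1}}$ and $\bigcup_n K_n=M$, chosen so that $x\in\interior{K_1}$. For each $i,n$ set
\[
  \tau_{i,n}=\sup\{t\in[0,a_i):\gamma_i([0,t])\subset K_n\},
\]
with the convention $\tau_{i,n}=0$ if $\gamma_i(0)\notin K_n$. Then $\tau_{i,n}\leq\tau_{i,n+1}\leq a_i$, and $\gamma_i|_{[0,\tau_{i,n})}$ maps into $K_n$. Because $\gamma_i(0)\to x\in\interior{K_1}$ and each $\gamma_i$ is $1$-Lipschitz, a fixed $d$-ball around $x$ lies in $K_1$ and so there exists $r>0$ with $\tau_{i,1}\geq r$ for all sufficiently large $i$, which will give $b_1\geq r>0$ below.

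Next I would inductively build nested subsequences $(\gamma_i)_{i\in I^{(n)}}$ with $I^{(1)}\supset I^{(2)}\supset\cdots$ together with limit curves $\gamma^{(n)}:[0,b_n)\to K_n$ as follows. At stage $n$, starting from $I^{(n-1)}$ and $\gamma^{(n-1)}$, I would first refine $I^{(n-1)}$ along a sub-subsequence on which $\tau_{i,n}$ converges in $[0,\infty]$ to some $b_n\geq b_{n-1}$, then apply Proposition \ref{prop_CurveUniformConvergenceInBoundedRegion} to the restrictions $\gamma_i|_{[0,\tau_{i,n})}:[0,\tau_{i,n})\to K_n$ to extract $I^{(n)}\subset I^{(n-1)}$ along which the reparametrised curves converge uniformly on compacta of $[0,b_n)$ to some $\gamma^{(n)}$. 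Since the $\gamma_i$ are $1$-Lipschitz and the reparametrisations $f_i$ tend to the identity uniformly on compacta of $[0,b_n)$ (because $\tau_{i,n}\to b_n$), the estimate $d(\gamma_i(t),\gamma_i(f_i(t)))\leq|t-f_i(t)|$ converts this into uniform convergence of the originals $\gamma_i$ themselves on compacta of $[0,b_n)$. Uniqueness of subsequential limits then gives $\gamma^{(n)}|_{[0,b_{n-1})}=\gamma^{(n-1)}$ automatically.

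Finally I would set $a=\sup_n b_n$, define $\gamma:[0,a)\to M$ by $\gamma=\gamma^{(n)}$ on $[0,b_n)$, and take the diagonal subsequence $(\gamma_{i_k})_k$, which is eventually a subsequence of every $(\gamma_i)_{i\in I^{(n)}}$. Any compact $C\subset[0,a)$ lies in some $[0,b_n)$, so $\gamma_{i_k}|_C\to\gamma|_C$ uniformly for $k$ large, yielding the convergence-on-compacta conclusion. The bound $a\leq\limsup_k a_{i_k}$ follows from $b_n=\lim_k\tau_{i_k,n}\leq\liminf_k a_{i_k}$ for every $n$. The main obstacle in the argument is precisely this interplay between the reparametrisations built into Proposition \ref{prop_CurveUniformConvergenceInBoundedRegion} and the need to express the conclusion in terms of the original, unparametrised $\gamma_i$; the $1$-Lipschitz hypothesis is what makes the conversion possible and is what ensures the diagonal subsequence converges uniformly (not merely pointwise) on each compact piece, while the choice of the half-open intervals $[0,b_n)$ is what prevents the endpoint issue when $\gamma^{(n)}$ would land on $\partial K_n$ and cease to be extendable within $K_n$.
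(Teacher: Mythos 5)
Your proposal is correct and follows essentially the same route as the paper's proof: a compact exhaustion, first-exit times into each $K_n$, inductive application of Proposition \ref{prop_CurveUniformConvergenceInBoundedRegion} to the truncated curves, a diagonal subsequence, and patching the local limits via uniqueness of subsequential limits. Your explicit use of the $1$-Lipschitz estimate $d(\gamma_i(t),\gamma_i(f_i(t)))\leq\abs{t-f_i(t)}$ to pass from convergence of the reparametrised curves back to the original $\gamma_i$, and your observation that $\tau_{i,1}\geq r>0$ guarantees $a>0$, make explicit two points the paper leaves implicit, but the argument is the same.
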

    \begin{proof}
    Let $\{K_i:i\in\N\}$ be a compact exhaustion of $M$
    with $x\in K_1$.
    If there exists $j\in\N$ and $N\in\N$ so that $i\geq N$
    implies that $\gamma_i\subset K_j$ then the result follows from 
    Proposition \ref{prop_CurveUniformConvergenceInBoundedRegion}.

    Otherwise for all $j\in \N$ and $N\in\N$ there
    exists $i\geq N$ so that $\gamma_i\not\subset K_j$.
    We can, therefore, by taking a subsequence, assume that
    for all $i,j\in\N$, $\gamma_i\cap K_j\neq\EmptySet$ and 
    that $d(K_j,M\setminus K_{j+1})>0$ for each $j$.
    Thus, if $a_i^j\in[0,a_i)$ is such that
    $\gamma_i|_{[0,a_i^j]}$ is the connected component of
    $\gamma_i\cap K_j$ containing $\gamma_i(0)$, then
    we know that each $a_i^j$ is well-defined and for all $j\in\N$,
    \[
      \limsup_{i} a_i^j
      <
      \limsup_{i} a_i^j + d(K_j,M\setminus K_{j+1})
      \leq 
      \limsup_{i}a_i^{j+1}
      \leq
      \limsup_ia_i.
    \]
    In particular, as $K_j$ is compact,
    $\limsup_{i} a_i^j$ is finite, though $\limsup_ia_i$ may be infinite.

    We shall construct $\gamma$ inductively over the compact exhaustion
    using Proposition \ref{prop_CurveUniformConvergenceInBoundedRegion}. 
    Without loss of generality we assume that for all $i\in\N$,
    $\gamma_i(0)\in K_1$.

    Let $a^1=\limsup\{a_i^1:i\in\N\}$. By compactness of $K_1$,
    $a^1<\infty$.
    Apply 
    Proposition \ref{prop_CurveUniformConvergenceInBoundedRegion}
    with $\epsilon_1>0$
    to construct $\gamma^1:[0,a^1]\to K_1$
    a limit curve of a subsequence of 
    $(\gamma_{i}\circ f_{i}^1)_i$
    which converges uniformly to $\gamma^1$,
    where, for each $i\in\N$, $f_i^1:[0,a^1]\to[0,a_i^1]$ is defined by
    $f_i^1(t)=a_i^1 t / a^1$.
    Let $h_1:\N\to\N$ be the function that selects the uniformly
    convergent subsequence.
    That is, the sequence 
    $(\gamma_{h_1(i)}\circ f_{h_1(i)}^1)_i$
    converges uniformly to $\gamma^1$.
    Note that 
    Proposition \ref{prop_CurveUniformConvergenceInBoundedRegion}
    implies that $\lim_i a^1_{h_1(i)}=a^1$.

    We can repeat this construction, starting with the
    sequence of curves
    $(\gamma_{h_1(i)})_i$ and the compact set $K_2$.
    Let $a^2=\limsup\{a^2_{h_1(i)}:i\in\N\}$.
    Apply 
    Proposition \ref{prop_CurveUniformConvergenceInBoundedRegion}
    with $\epsilon_2>0$
    to construct $\gamma^2:[0,a^2]\to K_2$
    a limit curve of a subsequence of 
    $(\gamma_{h_1(i)}\circ f_{h_1(i)}^2)_i$
    which converges uniformly to $\gamma^2$,
    where, for each $i\in\N$, $f_i^2:[0,a^2]\to[0,a_i^2]$ is defined by
    $f_i^2(t)=a_i^2 t / a^2$.
    Let $h_2:\N\to\N$ be the function that selects the uniformly
    convergent subsequence from $(\gamma_{h_1(i)})_i$.
    That is, the sequence 
    $(\gamma_{h_2(i)}\circ f_{h_2(i)}^2)_i$
    converges uniformly to $\gamma^2$
    and is a subsequence of
    $(\gamma_{h_1(i)}\circ f_{h_1(i)}^1)_i$.
    Note again that 
    Proposition \ref{prop_CurveUniformConvergenceInBoundedRegion}
    implies that $\lim_i a^2_{h_2(i)}=a^2$.
    We can repeat the construction for all $j\in\N$.

    With this construction completed we now give the definition of
    the limit curve $\gamma$ and the required subsequence.
    We show that for all $j\in\N$, the curve $\gamma^j$ is a subcurve of 
    $\gamma^{j+1}$.
    By construction
    $(\gamma_{h_j(i)}\circ f_{h_j(i)}^j)_i$
    converges uniformly to $\gamma^j$
    and
    $(\gamma_{h_{j+1}(i)}\circ f_{h_{j+1}(i)}^{j+1})_i$
    converges uniformly to $\gamma^{j+1}$.
    By construction 
    $(\gamma_{h_{j+1}(i)})_i$
    is a subsequence of
    $(\gamma_{h_j(i)})_i$.
    Therefore $(\gamma_{h_{j+1}(i)}\circ f_{h_{j+1}(i)}^j)_i$
    converges to $\gamma^j$ uniformly while
    $(\gamma_{h_{j+1}(i)}\circ f_{h_{j+1}(i)}^{j+1}(i))_i$
    converges to $\gamma^{j+1}$ uniformly.
    By construction
    \[
      \lim_{i\to\infty}\frac{a_{h_{j}(i)}^{j}}{a^{j}}t
      = t
      = \lim_{i\to\infty}
      \frac{a_{h_{j+1}(i)}^{j+1}}{a^{j+1}}t.
    \]
    Thus 
    \[
      \lim_{i\to\infty}\left(f_{h_{j+1}(i)}^{j+1}\right)^{-1}\circ
        f_{h_{j+1}(i)}^{j}(t) = t
    \]
    and hence
    \[
    \left(\gamma_{h_{j+1}(i)}\circ f^{j+1}_{h_{j+1}(i)}\circ
      \left(f_{h_{j+1}(i)}^{j+1}\right)^{-1}\circ
        f_{h_{j+1}(i)}^{j}(t)\right)_i
    \]
    converges to both $\gamma^j(t)$ and $\gamma^{j+1}(t)$.
    Uniform convergence and the Hausdorffness
    of $M$ imply that $\gamma^j(t)=\gamma^{j+1}(t)$. 
    Since this is true for all $t\in [0,a^j]$ we see that $\gamma^j$
    is a subcurve of $\gamma^{j+1}$.

    We choose the subsequence which has the given uniform convergence
    property.
    Let $k\in \N$
    and let $i_k=h_k(k)$.
    Let $j\in\N$ then
    $(\gamma_{i_k}\circ f_{i_k}^j)_{k\geq j}$ is
    a subsequence of 
    $(\gamma_{h_j(i)}\circ f_{h_j(i)}^j)_{i}$ and therefore
    converges uniformly to $\gamma^j$.

    We define the curve $\gamma$.
    As $K_i$ is a compact exhaustion we know that
    for all $j\in\N$, $a^j<a^{j+1} < \limsup_ia_i$.
    Thus the sequence $(a^j)$ is increasing and $\lim_ja^j$ exists (possibly
    infinite). We 
    let $a=\lim_ja^j$.
    Thus, we can define a curve
    $\gamma:[0,a)\to M$
    by $\gamma(t)=\gamma^j(t)$ for any
    $j\in\N$ with $t<a^j$. Since $a^j=\lim_ka^j_{i_k}$ we know that 
    $a^j<\lim\sup_ka_{i_k}$ and therefore that $a\leq \lim\sup_ka_{i_k}$
    (again, possibly infinite).

    We now show that if $C\subset [0,a)$ is compact then 
    there exists $J\in \N$ so that
    $k\geq J$ implies that $\sup\{c:c\in C\}\leq a_{i_k}$
    and
    $(\gamma_{i_k}|_C)_{k\geq J}$ converges uniformly to $\gamma|_C$.

    Let $t=\sup\{c:c\in C\}$.
    Then the image $\gamma([0,t])$ is compact and hence
    lies in some $K_j$, $j\in\N$.
    This implies that $\gamma([0,t])\subset\gamma^j$
    and therefore that $t<a^j$. Since $\lim_k a^j_{i_k}=a^j$
    we know that there exists the required $J$ and that
    the claimed uniform convergence occurs.
  \end{proof}
  Unlike similar results,
  Theorem 
  \ref{thm_CurveUniformConvergenceInBoundedRegion_extension}
  does not require the distance to be complete,
  e.g.\
  \cite[Theorem 14.2, see the last sentence on page 510]{beem1996global},
  \cite[Theorem 3.1]{minguzzi2008limit}
  \cite[Theorem 3.14]{kunzinger2018lorentzian}.

\section{Length control over limit curves via null distances}
  \label{sec_length_control}

  We left open the choice of distance in 
  Theorem \ref{thm_CurveUniformConvergenceInBoundedRegion_extension}.
  In this section we show that 
  the null distance induced by a suitable time function
  is sufficient to show that
  Lemma \ref{lem_improved_length_bound} holds for
  the limit curve constructed in
  Theorem \ref{thm_CurveUniformConvergenceInBoundedRegion_extension}.

  By a ``suitable time function''
  we mean any function whose null distance is a metric compatible with the
  manifold topology as in Proposition \ref{prop:SV-metric} and which satisfies
  a further condition given in 
  Proposition \ref{lem_cauchy_sort_of}. To prepare, the next two results make
  weaker assumptions, and so the null distances my not satisfy Proposition
  \ref{prop:SV-metric}.

	\begin{proposition}
		\label{lem_time_function_parametrisation}
    Let $(M,g)$ be a Lorentzian manifold.
		Let $f:M\to\R$ be a locally Lipschitz function
    which is increasing on all timelike curves, 
    and let $d(\cdot,\cdot; f)$ be
    the associated null distance.
		Let
    $\gamma:[0,a)\to M$, $a\in\R^+\cup\{\infty\}$ be a past directed causal curve.
    If there exists a constant $b>0$ such that for all points where
    $\nabla f$ exists we have $g(\nabla f,\nabla f)\leq -b^2$,
    then for all $t_1,t_2\in[0,a)$
    \[
			d(\gamma(t_1),\gamma(t_2); f)= 
      \abs{f(\gamma(t_2))-f(\gamma(t_1))}
      \geq bL(\gamma|_{[t_1,t_2]}),
    \]
    where $d(\cdot,\cdot; f)$ is the null distance defined by $f$
    and $L$ is the Lorentzian length.
	\end{proposition}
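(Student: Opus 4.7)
The plan is to separate the claim into two parts: (i) the identification of $d(\gamma(t_1),\gamma(t_2);f)$ with $\abs{f(\gamma(t_2))-f(\gamma(t_1))}$ via the Sormani--Vega characterisation of the null distance on causally related points, and (ii) a pointwise lower bound for $(f\circ\gamma)'$ obtained from the reverse Cauchy--Schwarz inequality for causal vectors. First I would pin down the time orientation of $\nabla f$. Since $g(\nabla f,\nabla f)\le -b^2<0$ wherever $\nabla f$ exists, $\nabla f$ is timelike at every such point and so carries a well-defined time orientation there. At a differentiability point $p$ and any future-directed timelike $v\in T_pM$, a short timelike curve $\mu$ through $p$ with $\mu'(0)=v$ satisfies $(f\circ\mu)'(0)\ge 0$ by the timelike-monotonicity assumption, which equals $g(\nabla f(p),v)$. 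Since this holds for a full open cone of future timelike $v$, $\nabla f(p)$ cannot be future-timelike, so it is past-directed timelike with $\sqrt{-g(\nabla f,\nabla f)}\ge b$.

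Next I would reparametrise $\gamma$ to be locally Lipschitz using the result recalled from \cite{beem1996global}, and invoke the Lipschitz chain rule to write
\[
  (f\circ\gamma)'(t)=g\bigl(\nabla f(\gamma(t)),\gamma'(t)\bigr)
\]
at almost every $t$. Repeating the same calculation for an arbitrary future-directed causal $\eta$ and combining with Step~1 shows $(f\circ\eta)'\ge 0$ a.e.; absolute continuity then upgrades this to monotonicity of $f\circ\eta$, so $f$ is a continuous generalised time function in the sense of \cite{sormani2016null}. For $t_1<t_2$ we have $\gamma(t_1)\in\jfuture{\gamma(t_2)}$, and so by symmetry of the null distance together with \cite[Lemma 3.11]{sormani2016null},
\[
  d(\gamma(t_1),\gamma(t_2);f)=d(\gamma(t_2),\gamma(t_1);f)=f(\gamma(t_1))-f(\gamma(t_2))=\abs{f(\gamma(t_2))-f(\gamma(t_1))}.
\]

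Finally, at a.e.\ $t$ the vector $\gamma'(t)$ is past-causal and $\nabla f(\gamma(t))$ is past-timelike, so the reverse Cauchy--Schwarz inequality for causal vectors with the same time orientation gives
\[
  g(\nabla f,\gamma')\le -\sqrt{-g(\nabla f,\nabla f)}\,\sqrt{-g(\gamma',\gamma')}\le -b\sqrt{-g(\gamma',\gamma')}.
\]
Integrating over $[t_1,t_2]$ and using \eqref{eq_rect_h1_area} yields $f(\gamma(t_2))-f(\gamma(t_1))\le -b\,L(\gamma|_{[t_1,t_2]})$, which together with the previous paragraph gives the proposition. The main obstacle is the measure-theoretic identity $(f\circ\gamma)'(t)=g(\nabla f(\gamma(t)),\gamma'(t))$ a.e.: Rademacher's theorem alone does not guarantee that the null set of non-differentiability of $f$ pulls back to a null set in the parameter interval of $\gamma$, so one needs absolute continuity of $f\circ\gamma$ together with a density argument based on \cite[Thm 2.5.1]{clarke1990optimization} or a standard Lipschitz chain rule. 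Extracting the past time orientation of $\nabla f$ from the curvewise monotonicity hypothesis is the other delicate step, and is handled by the cone argument above.
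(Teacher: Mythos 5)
Your overall architecture (equality via \cite[Lemma 3.11]{sormani2016null}, time orientation of $\nabla f$ via a cone argument, then reverse Cauchy--Schwarz and integration) matches the skeleton of the paper's proof, but you have left a genuine gap at precisely the point where all the work lies. The identity $(f\circ\gamma)'(t)=g(\nabla f(\gamma(t)),\gamma'(t))$ a.e.\ is not available: Rademacher gives differentiability of $f$ off a Lebesgue-null subset of $M$, but the image of a causal curve is itself a null set in $M$ (for $\dim M\geq 2$), so $\nabla f$ may fail to exist at \emph{every} point of $\gamma$. There is no ``standard Lipschitz chain rule'' that produces this equality; the composition $f\circ\gamma$ is Lipschitz and hence differentiable a.e.\ in $t$, but its derivative need not be expressible through a gradient of $f$ at points of the curve. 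You correctly flag this as ``the main obstacle'' in your closing paragraph, but deferring it to ``a density argument based on \cite[Thm 2.5.1]{clarke1990optimization} or a standard Lipschitz chain rule'' is naming the difficulty, not resolving it. This is exactly why the paper's proof does not argue pointwise along $\gamma$ at all: it replaces $f$ by a smooth approximant $\tilde f$ via Theorem \ref{thm_unreasonable_approx_result}, uses Theorem \ref{thm:clarke} together with the convexity and closedness of the set of past-pointing vectors $v$ with $g(v,v)\leq -b^2$ to show $d\tilde f$ inherits the causal gradient bound (up to $\epsilon$), replaces $\gamma$ by a piecewise geodesic approximant of nearly equal Lorentzian length, applies reverse Cauchy--Schwarz to the smooth data, and then lets $\epsilon\to 0$.

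Your sketch could be repaired without smoothing, but only by switching from the equality you wrote to an inequality: at a.e.\ $t$ where $\gamma'(t)$ and $(f\circ\gamma)'(t)$ exist one has
\[
  (f\circ\gamma)'(t)\ \leq\ f^\circ\bigl(\gamma(t);\gamma'(t)\bigr)
  \ =\ \max\bigl\{\,\zeta(\gamma'(t)) : \zeta\in\partial^\circ f(\gamma(t))\,\bigr\},
\]
and then one must show that \emph{every} element of $\partial^\circ f(\gamma(t))$ is a past-pointing covector with $g^{-1}(\zeta,\zeta)\leq -b^2$, using Theorem \ref{thm:clarke} and the fact that $\{\,v\ \text{past-pointing}: g(v,v)\leq -b^2\,\}$ is closed and convex (which follows from the reverse Cauchy--Schwarz inequality itself). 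Only then does the reverse Cauchy--Schwarz bound $\zeta(\gamma'(t))\leq -b\sqrt{-g(\gamma'(t),\gamma'(t))}$ apply to every competitor $\zeta$, and integration of the absolutely continuous function $f\circ\gamma$ finishes the argument. As written, your Step~2 asserts a chain-rule identity that is false in general for merely locally Lipschitz $f$, so the proof is incomplete at its central step. (Your cone argument for the past orientation of $\nabla f$ and the use of \cite[Lemma 3.11]{sormani2016null} for the equality of the null distance with $\abs{f(\gamma(t_2))-f(\gamma(t_1))}$ are both fine.)
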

  \begin{proof}
    The equality
    \[
			d(\gamma(t_1),\gamma(t_2); f)= 
      \abs{f(\gamma(t_2))-f(\gamma(t_1))}
    \]
    is proven in
    \cite[Lemma 3.11]{sormani2016null}.
    We will show that
    \[
      \abs{f(\gamma(t_2))-f(\gamma(t_1))}
      \geq bL(\gamma|_{[t_1,t_2]}).
    \]

    We may assume that $\gamma([t_1,t_2])\subset U$ where
    $\phi:U\to \R^n$ is a chart. Let $\norm{\cdot}_{2}:\R^n\to\R$ be
    the Euclidean norm on $\R^n$. Throughout the proof below we
    freely identify differential forms in $T^*\R^n$
    with
    tangent vectors $T\R^n$ with points in $\R^n$ using the isomorphisms
    induced by the standard frame and metric on $\R^n$.
    The open Euclidean unit ball in $\R^n$ will be denoted $B$.
    In an abuse of notation, let $g$ and $g^{-1}$ denote the push forward
    of $g$ and $g^{-1}$ by $\phi$ onto $\phi(U)$.
    Since $g,g^{-1}$ are coordinate dependent metrics we must take some
    care with them when working in $\R^n$ with the identifications
    above.
    Note, in particular, 
    that by taking the points of evaluation, e.g.\ $x,y\in \phi(U)$,
    arbitrarily close we can ensure that $g^{-1}|_x$ is arbitrarily close
    to $g^{-1}|_y$.
    We have
    $f\circ\gamma=f\circ \phi^{-1}\circ \phi\circ\gamma$, so we can consider
    the function $f\circ\phi^{-1}$ on $\phi(U)$, which contains the
    curve $\phi\circ\gamma$.

    Since $\gamma([t_1,t_2])$ is compact and $U$ is open
    there exists $\epsilon^*>0$ so that
    the compact set
    \[
      V:=\{x\in\R^n: 
        \exists t\in[t_1,t_2],\, \norm{x-\gamma(t)}\leq \epsilon^*\}\subset
      \phi(U).
    \]

    The set-valued 
    Clarke generalised gradient of $f\circ \phi^{-1}$ at $y\in \phi(U)$
    is denoted $\partial^\circ(f\circ \phi^{-1})(y)$, see
    Definition \ref{def_cgg}.
    Minguzzi \cite[Theorem 1.19]{minguzzi2019lorentzian} shows that $f$ has a
    gradient defined almost everywhere on the manifold.
    Theorem \ref{thm:clarke} implies that, for each $y\in \phi(V)$, 
    vectors
    $v\in \partial^\circ(f\circ \phi^{-1})(y)$ satisfy
    $g^{-1}(v,v)\leq -b^2$, 
    because the past-pointing vectors  
    of Lorentzian length $\leq -b^2$ form a
    convex set.

    The function
    $f$ is locally Lipschitz and so there is a constant $D>0$ so that at
    any $y\in {\phi(V)}$ where $df$ exists we have 
    $\norm{df}_2\leq D$.
    Hence, again by Theorem \ref{thm:clarke}, any element 
    $v\in \partial^\circ(f\circ \phi^{-1})(y)$ 
    also satisfies $\norm{v}_2\leq D$.
    Thus we know that
    for each $y\in \phi(U)$ the set
    $K_y=\overline{\partial^\circ(f\circ \phi^{-1})(y+\epsilon^* B) + \epsilon^* B}$
    is compact.
    The set $K=\bigcup_{y\in V}K_y$ is a compact subset of
    $T\R^n$. Hence as $g$ is continuous
    there exists $C>0$ so that
    for all $y\in V$ and all $u,v\in K_y$,
    $g(u,v)\leq C$.

    Fix $\epsilon$ so that $0<\epsilon<\epsilon^*$
    and so
    that $-b^2 + (\epsilon^2 + 2\epsilon)C + \epsilon <0$.

    Since $\gamma$ is a continuous past directed causal curve we
    can, by \cite[Definition 7.4]{penrose1972techniques},
    approximate $\phi\circ\gamma|_{[t_1,t_2]}$ by a past directed
    causal curve which is a piecewise
    smooth $g$-geodesic 
    $\tilde{\gamma}:[t_1,t_2]\to\phi(V)$
    so that
    $\abs{L(\phi\circ\gamma|_{[t_1,t_2]}) - L(\tilde{\gamma})}<\epsilon$,
    $\tilde{\gamma}(t_1)=\phi\circ\gamma(t_1)$, 
    and
    $\tilde{\gamma}(t_2)=\phi\circ\gamma(t_2)$.
    Note that by construction, wherever $\tilde{\gamma}'$ exists
    it is a past directed $g$-causal vector.

    Since $g$ is smooth on $\phi(U)$,
    there exists a function $\delta:\phi(U)\to\R^+$ so that
    for all $x\in\phi(U)$,
    \begin{enumerate}
      \item $\delta(x)<\epsilon$,
      \item $\{y\in\R^n:\norm{x-y}_2\leq \delta(x)\}\subset \phi(U)$, and
      \item for all $x,y\in\phi(U)$,
        if $\norm{x-y}_2<\delta(x)$ 
        then 
        $\abs{g^{-1}|_x(d\tilde{f}(x),d\tilde{f}(x))
          -g^{-1}|_y(d\tilde{f}(x),d\tilde{f}(x))}<\delta(x)$.
    \end{enumerate}

    Since $f\circ \phi^{-1}$ is Lipschitz,
    Theorem \ref{thm_unreasonable_approx_result}
    implies that there exists 
    a smooth function $\tilde{f}:\phi(U)\to\R$ 
    such that for all $y\in \phi(U)$
    we have $\abs{f\circ \phi^{-1}(y)-\tilde{f}(y)}\leq\delta(y)<\epsilon$, 
    and
    \[
      d\tilde{f}(y)\in 
        \partial^\circ (f\circ\phi^{-1})((y + \delta(y) B)\cap \phi(U))
        + \delta(y) B.
    \]
    Note that while $d\tilde{f}$ is evaluated at $y$,
    the membership relation is for 
    the union of 
    Clarke's generalised gradients of $f\circ\phi^{-1}$ 
    over the set of points
    $(y + \delta(y) B)\cap \phi(U)$. 
    We want to use knowledge of 
    $\partial^\circ (f\circ\phi^{-1})(y)$ to construct a bound
    on $g(\nabla \tilde{f},\nabla\tilde{f})$. In doing so we
    will need to be careful about where the generalised gradient
    and the differential $d\tilde{f}$
    are evaluated.

    With these approximations we have
    \begin{align*}
      \abs{ \tilde{f}(\tilde{\gamma}(t_2))-\tilde{f}(\tilde{\gamma}(t_1)) }
      &=\abs{\tilde{f}(\phi\circ\gamma(t_2))-\tilde{f}(\phi\circ\gamma(t_1))}\\
      &=\abs{
        f(\gamma(t_2))+\tilde{f}(\phi\circ\gamma(t_2))-
        f(\gamma(t_2))-f(\gamma(t_1))-
        (\tilde{f}(\phi\circ\gamma(t_1))-f(\gamma(t_1)))}\\
      &< \abs{f(\gamma(t_2))-f(\gamma(t_1))}+2\epsilon.
    \end{align*}

    We now show that $x\in\phi(U)$ implies that
    $g^{-1}|_{x}(d\tilde{f}(x),d\tilde{f}(x))<0$, 
    thus showing
    that $\tilde{f}$ has $g$-causal gradient.
    By construction of $\tilde{f}$
    there exists 
    $y\in (x + \delta(x) B)\cap \phi(U)$
    so that
    \[
      d\tilde{f}(x)
      \in
        \partial^\circ (f\circ\phi^{-1})(y)+\delta(x) B.
    \]
    By definition there exists
    $w\in\partial^\circ f\circ \phi^{-1}(y)$
    and $u\in B$ so that
    $d\tilde{f}(x)=w + \delta(x)u$.
    We can compute that
    \begin{align*}
      g|_y(d\tilde{f}(x),d\tilde{f}(x)
        &=g|_y(w,w)
        +
        \delta(x)^2g|_y(u,u)
        +
        2
        \delta(\gamma(t)) g|_y(w,u)
        \leq 
        -b^2+(\epsilon^2+2\epsilon) C.
    \end{align*}
    Since $y\in (x + \delta(x) B)\cap \phi(U)$, by definition of
    $B$,
    we see that $\norm{x - y}_2<\delta(x)$
    and therefore know that
    $\abs{
      g^{-1}|_{x}(d\tilde{f}(x), d\tilde{f}(x))
      -
      g^{-1}|_y(d\tilde{f}(x),d\tilde{f}(x))}
      <
      \delta(x)$.
    Thus we have that
    \[
      g^{-1}|_{x}(d\tilde{f}(x), d\tilde{f}(x))
      \leq g^{-1}|_y(d\tilde{f}(x),d\tilde{f}(x)) + \delta(x)
      < -b^2+(\epsilon^2+2\epsilon) C + \epsilon
      < 0,
    \]
    by construction of $\epsilon$.
    Hence the $g$-gradient of $\tilde{f}$ is $g$-causal.

    By \cite[Prop 5.30]{oneil1983semi}, if $\tilde{\gamma}'$
    and $\grad \tilde{f}$ are both time-like
    the reverse Cauchy 
    inequality
    holds, 
    \[
      \abs{g(\grad \tilde{f}, \tilde{\gamma}')}
      \geq 
      \sqrt{-g(\grad \tilde{f}, \grad \tilde{f})}
      \sqrt{-g(\tilde{\gamma}',\tilde{\gamma}')},
    \]
    where $\nabla\tilde{f}$ is the gradient of $\tilde{f}$
    with respect to the push forward of $g$, the
    $g$-gradient.
    If $\grad \tilde{f}$ or $\tilde{\gamma}'$ is null then it is clear that this
    inequality continues to hold. Using this inequality we have 
    \begin{align*}
        \abs{\tilde{f}(\tilde{\gamma}(t_2))-\tilde{f}(\tilde{\gamma}(t_1))}
      &=
        -
        \int_{t_1}^{t_2}
          \frac{d}{dt}\tilde{f}(\tilde{\gamma}(t))
        \,\dd t
       =
        -
        \int_{t_1}^{t_2}
          d\tilde{f}(\tilde{\gamma}')
        \,\dd t
       = 
        \int_{t_1}^{t_2}
          \abs{g(\grad \tilde{f},\tilde{\gamma}')}
        \,\dd t\\
      &\geq 
        \int_{t_1}^{t_2}
          \sqrt{-g(\grad \tilde{f},\grad \tilde{f})}\,
          \sqrt{-g(\tilde{\gamma}',\tilde{\gamma}')}
        \,\dd t\\
      &> 
        \int_{t_1}^{t_2}
          \sqrt{b^2-(\epsilon^2+2\epsilon) C - \epsilon}\,
          \sqrt{-g(\tilde{\gamma}',\tilde{\gamma}')}
        \,\dd t\\
       &=
        \sqrt{b^2-(\epsilon^2+2\epsilon) C - \epsilon}\ 
         L(\tilde\gamma|_{[t_1,t_2]}).
    \end{align*}
    The result now follows by letting $\epsilon\to 0$.
	\end{proof}
	
  \begin{lemma}
    \label{lem_increasing_function_parameter}
    Let $(M,g)$ be a 
    Lorentzian manifold and
    let $f:M\to\R$ be a time function and $d(\cdot,\cdot; f)$ the associated
    null distance. 
    If $\gamma:[0,b)\to M$ is a past directed causal curve then
    there exists a change of parameter
    $s:[0, a)\to[0,b)$,
    where $a=\lim_{t\to b}(f\circ \gamma(0) - f\circ\gamma(t))$,
    so that
    $f\circ\gamma\circ s(t) = f\circ\gamma(0) - t$
    and, for all $t_1,t_2\in[0,a)$,
    \[
      d(\gamma\circ s(t_1), \gamma\circ s(t_2); f)=\abs{t_1-t_2}.
    \]
  \end{lemma}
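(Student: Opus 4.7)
The plan is to invert $f\circ\gamma$ and then apply the Sormani--Vega identity for the null distance between causally related points. Because $\gamma$ is past directed causal and $f$ is a time function, $t\mapsto f(\gamma(t))$ is continuous and strictly decreasing on $[0,b)$. Define
\[
  \phi:[0,b)\to\R,\qquad \phi(t)=f(\gamma(0))-f(\gamma(t)).
\]
Then $\phi$ is continuous and strictly increasing with $\phi(0)=0$ and $\lim_{t\to b}\phi(t)=a$, so $\phi$ is a homeomorphism onto $[0,a)$. Let $s:[0,a)\to[0,b)$ be its continuous, strictly increasing inverse. By construction,
\[
  f\circ\gamma\circ s(t)=f(\gamma(0))-t\qquad\text{for all } t\in[0,a),
\]
which establishes the first claim.

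For the distance identity, fix $t_1,t_2\in[0,a)$ and assume without loss of generality $t_1<t_2$. Since $s$ is strictly increasing and $\gamma$ is past directed causal, the restriction $\gamma\circ s|_{[t_1,t_2]}$ is a past directed causal curve from $\gamma\circ s(t_1)$ to $\gamma\circ s(t_2)$; in particular $\gamma\circ s(t_1)\in\jfuture{\gamma\circ s(t_2)}$. The identity $d(x,y;f)=f(y)-f(x)$ for $y\in\jfuture{x}$, recalled after Definition \ref{def_null_distance} (from \cite[Lemma 3.11]{sormani2016null}), therefore gives
\[
  d(\gamma\circ s(t_2),\gamma\circ s(t_1);f)
  =f(\gamma\circ s(t_1))-f(\gamma\circ s(t_2))
  =t_2-t_1.
\]
The null distance is symmetric, since reversing an alternating causal curve yields an alternating causal curve with the same $L(\,\cdot\,;f)$-value; hence $d(\gamma\circ s(t_1),\gamma\circ s(t_2);f)=|t_1-t_2|$, as required.

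There is no substantive obstacle: the only property of $f$ that is used beyond its role in defining $d(\cdot,\cdot;f)$ is that $f\circ\gamma$ is continuous and strictly monotone along a causal curve, which is precisely what being a time function supplies, and the distance identity is then an immediate consequence of the cited characterisation of $d(\cdot,\cdot;f)$ on causally ordered pairs.
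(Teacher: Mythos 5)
Your proposal is correct and follows essentially the same route as the paper: invert the continuous, strictly monotone function $t\mapsto f(\gamma(0))-f(\gamma(t))$ to obtain the change of parameter, then apply \cite[Lemma 3.11]{sormani2016null} to causally related points on the curve to get the distance identity. Your added remarks on the homeomorphism property of the inverse and the symmetry of the null distance only make explicit what the paper leaves implicit.
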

  \begin{proof}
    Let $a=\lim_{t\to b} f(\gamma(0)) - f(\gamma(t))$.
    Since $\gamma_i$ is past directed and causal $f\circ\gamma_i$
    is a decreasing, continuous, and bijective function.
    It therefore has an inverse.
    Thus we can define $s:[0, a)\to [0, b)$ by
    $s(u) = (f\circ\gamma)^{-1}\left(f(\gamma(0)) - u\right)$.
    Note that $u = f(\gamma(0)) - f(\gamma\circ s(u))$.
    By construction $s$ is continuous, bijective and increasing.

    We can compute, by \cite[Lemma 3.11]{sormani2016null},
    that for all $u_1,u_2\in[0,a]$,
    \begin{align*}
      d(\gamma\circ s(u_1), \gamma\circ s(u_2); f)
      =
      \abs{f\circ\gamma\circ s(u_1) - f\circ\gamma\circ s(u_2)}
      =
      \abs{u_1-u_2}.
    \end{align*}
    Thus the result holds.
  \end{proof}
  
  The next result says that if the 
  range of $f$ and the parametrisation of the curves
  in a sequence
  by Lemma \ref{lem_increasing_function_parameter}
  are compatible
  then the domains of the
  curves converge.

  \begin{proposition}
    \label{lem_cauchy_sort_of}
    Let $(M,g)$ be a 
    Lorentzian manifold, $f:M\to\R$ be a locally Lipschitz,
    locally anti-Lipschitz time
    function and
    let $d(\cdot, \cdot\,;f)$ be
    the null distance associated to $f$, 
    Definition \ref{def_null_distance}.
    Let $(\gamma_i:[0,b_i)\to M)_{i\in\N}$ be a sequence of 
    past directed inextendible
    continuous causal curves in
    $M$, with $b_i\in\R^+\cup\{\infty\}$, such that 
    there exists $x\in M$ with $\gamma_i(0)\to x$.
    For each $i\in\N$ let
    $s_i:[0,a_i)\to[0,b_i)$ be the change in parameter
    constructed in Lemma \ref{lem_increasing_function_parameter}.
    Let $\gamma:[0,a)\to M$ be the limit curve constructed
    in Theorem \ref{thm_CurveUniformConvergenceInBoundedRegion_extension}
    from the sequence $(\gamma_i\circ s_i)_i$,
    and let $(\gamma_{i_k}\circ s_{i_k})_k$ be the subsequence of $(\gamma_i)$
    that
    $d(\cdot, \cdot; f)$-converges uniformly on compacta to $\gamma$,

    If $f$ is such that
    for all $r\in(-\infty, f(\gamma(0)))\cap\range{f}$
    we have $\gamma\cap f^{-1}(r)\neq\EmptySet$ and
    there exists $N\in\N$ so that for all
    $k\geq N$ we have $\gamma_{i_{k}}\cap f^{-1}(r)\neq\EmptySet$
    then
    there exists a subsequence $(\gamma_{i_{k_j}})_j$ of
    $(\gamma_{i_k})$ so that
    \[
      a=\limsup_{j} a_{i_{k_j}}.
    \]
  \end{proposition}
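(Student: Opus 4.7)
The plan is to reduce the proposition to showing $\limsup_k a_{i_k}\leq a$: combined with the bound $a\leq\limsup_k a_{i_k}$ from Theorem \ref{thm_CurveUniformConvergenceInBoundedRegion_extension}, this produces $\limsup_k a_{i_k}=a$, and then extracting a subsequence $(a_{i_{k_j}})_j$ that converges to $a$ (or to $+\infty$ if $a=\infty$) finishes the proof.

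First I would transfer the parameter-normalization of Lemma \ref{lem_increasing_function_parameter} to the limit curve. Proposition \ref{prop:SV-metric} gives that $d(\cdot,\cdot;f)$ induces the manifold topology, so the uniform-on-compacta convergence $\gamma_{i_k}\circ s_{i_k}\to\gamma$ is topological convergence in $M$. Combining pointwise convergence, continuity of $f$, the identity $f(\gamma_{i_k}\circ s_{i_k}(t))=f(\gamma_{i_k}(0))-t$, and $\gamma_{i_k}(0)\to x=\gamma(0)$, one obtains $f(\gamma(t))=f(\gamma(0))-t$ for every $t\in[0,a)$, so the image $f\circ\gamma([0,a))$ equals $(f(\gamma(0))-a,\,f(\gamma(0))]$. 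Setting $r_0:=\inf\{r\in\range{f} : r<f(\gamma(0))\}\in[-\infty,f(\gamma(0)))$, the hypothesis that $\gamma\cap f^{-1}(r)\neq\EmptySet$ for every $r\in(-\infty,f(\gamma(0)))\cap\range{f}$ then forces $f(\gamma(0))-a\leq r_0$. (If $r_0=-\infty$ then $a=\infty$ and the conclusion is immediate from the Theorem.)

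For the crucial upper bound I would argue by contradiction. Suppose $\limsup_k a_{i_k}>a$; extract a subsequence (still indexed by $k$) with $a_{i_k}\to\alpha>a$ and fix a finite $\beta$ with $a<\beta<\alpha$. For $k$ large enough $\beta<a_{i_k}$, so $p_k:=\gamma_{i_k}\circ s_{i_k}(\beta)$ is defined and Lemma \ref{lem_increasing_function_parameter} gives $f(p_k)=f(\gamma_{i_k}(0))-\beta$. Continuity of $f$ and $\gamma_{i_k}(0)\to\gamma(0)$ yield $f(p_k)\to f(\gamma(0))-\beta<f(\gamma(0))-a\leq r_0$, so eventually $f(p_k)<r_0=\inf\range{f}$, contradicting $f(p_k)\in\range{f}$.

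The main obstacle is propagating the normalization $f\circ\gamma(t)=f(\gamma(0))-t$ cleanly to the limit curve, since uniform-on-compacta convergence only controls $\gamma$ on $[0,a)$ rather than up to the missing endpoint; choosing the auxiliary parameter $\beta$ to be both strictly greater than $a$ and strictly less than $\alpha$ (and in particular finite when $\alpha=+\infty$) is the key maneuver that keeps us within the regime where Lemma \ref{lem_increasing_function_parameter} applies. The second clause of the hypothesis — eventual intersection of $\gamma_{i_k}$ with $f^{-1}(r)$ — is not actually required for the subsequence statement, but an entirely parallel lower-bound argument using it yields $\liminf_k a_{i_k}\geq a$, strengthening the conclusion to convergence of the full subsequence $a_{i_k}\to a$.
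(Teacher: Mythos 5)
Your proof is correct, and while it shares the paper's skeleton --- the lower bound $a\leq\limsup_k a_{i_k}$ from Theorem \ref{thm_CurveUniformConvergenceInBoundedRegion_extension} plus the normalisation $f\circ\gamma(t)=f(\gamma(0))-t$ on the limit curve --- your execution is genuinely different and in two respects leaner. First, you obtain $f\circ\gamma(t)=f(\gamma(0))-t$ directly from continuity of $f$, pointwise convergence in the manifold topology (via Proposition \ref{prop:SV-metric}) and the identity of Lemma \ref{lem_increasing_function_parameter}; the paper instead routes this through achronality of the level sets $f^{-1}(r)$ and a singleton-intersection argument, which your approach renders unnecessary. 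Second, for the upper bound the paper computes $\inf\range{f}=f(\gamma(0))-a$ and then asserts $\inf\range{f}=f(\gamma_{i_k}(0))-a_{i_k}$ along the sequence before sandwiching, whereas your contradiction argument --- a point $p_k=\gamma_{i_k}\circ s_{i_k}(\beta)$ with $a<\beta<\alpha$ would eventually have $f(p_k)$ strictly below $r_0$ --- needs only that $f(p_k)\in\range{f}$, which is automatic. This is why, as you correctly observe, the second clause of the hypothesis (eventual intersection of the $\gamma_{i_k}$ with each level set) is not used in your argument, while the paper leans on it twice; your version thus yields the slightly stronger conclusion that the full subsequence already satisfies $\limsup_k a_{i_k}=a$, under weaker hypotheses. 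The only point requiring a little care is the identification $r_0=\inf\range{f}$ in the final contradiction: this holds because $a>0$ guarantees that $\range{f}$ meets $(-\infty,f(\gamma(0)))$, and your parenthetical disposal of the case $r_0=-\infty$ (equivalently $a=\infty$) correctly isolates the only degenerate situation.
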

  \begin{proof}
    By Theorem \ref{thm_CurveUniformConvergenceInBoundedRegion_extension} we
    know that $a\leq\limsup_k a_{i_k}$.
    
    We first show that $f\circ\gamma(u)=f\circ\gamma(0) - u$.
    Let $u\in[0,a)$.
    Since $\gamma$ is past directed 
    $r=f\circ\gamma(u)\in(-\infty, f(\gamma(0))\cap\range{f}$.
    Thus by assumption there exists $N\in\N$ so that 
    $k\geq N$ implies that there exists $u_{i_k}\in[0,a_{i_k})$
    so that $f\circ\gamma_{i_j}\circ s_{i_j}(u_{i_j})=f\circ\gamma(u)$.
    The set $f^{-1}(r)$ is achronal and as each
    $\gamma_{i_k}$ and $\gamma$ are causal we know that 
    the sets
    $\gamma_{i_k}\cap f^{-1}(r)$ and $\gamma\cap f^{-1}(r)$ are
    singletons. Since $f$ is continuous $f^{-1}(r)$ is closed 
    and therefore, by the uniform convergence of $(\gamma_{i_k}\circ s_{i_k})$
    to $\gamma$, we know that $u_{i_k}\to u$.
    Thus
    \[
      f\circ\gamma(u)
      =f\circ\gamma_{i_k}\circ s_{i_k}(u_{i_k})
      =\lim_{k\to\infty}f\circ\gamma_{i_k}\circ s_{i_k}(u_{i_k})
      =\lim_{k\to \infty}
        f\circ\gamma_{i_k}(0) - u_{i_k}
      =f\circ\gamma(0) - u,
    \]
    as claimed.

    We now prove that $a=\limsup_ja_{i_j}$.
    Let $r\in(-\infty, f(x))\cap \range{f}$
    by assumption there
    exists
    $u\in[0,a)$ so that $f\circ\gamma(u)=r$.
    As $\gamma$ is past directed we now know that
    \[
      \inf\range{f}=\lim_{u\to a}f\circ\gamma(u)=
      f\circ\gamma(0) - \lim_{u\to a}u
      =f\circ\gamma(0) - a.
    \]
    By construction of each $s_i$, Lemma 
    \ref{lem_increasing_function_parameter},
    we know that
    $f\circ\gamma_{i_k}\circ s_{i_k}(t) = f\circ\gamma_{i_k}(0) - t$.
    By assumption
    there exists $N\in\N$ so that
    $k\geq K$ implies that there exists $u_{i_k}\in[0,a_{i_k})$
    so that $r=f\circ\gamma_{i_k}\circ s_{i_k}(u_{i_k})$.
    Thus as for $\gamma$,
    \[
      \inf\range{f}=f\circ\gamma_{i_j}(0) - a_{i_j}.
    \]
    By taking a subsequence, if necessary, we can assume that
    $\lim_k a_{i_{k}}=\limsup_k a_{i_k}$.
    Since $a\leq\limsup_ka_{i_k}$,
    \[
      \inf\range{f}=f\circ\gamma(0) - a
      \geq f\circ \gamma(0) - \limsup_ka_{i_k}
      = \lim_k f\circ \gamma_{i_{k}}(0) - a_{i_{k}}
      =\inf\range{f}.
    \]
    Thus $a=\limsup_k a_{i_k}$ as required.
  \end{proof}

  We now give the ``null distance limit curve theorem'' which
  demonstrates that for the null distance induced by a locally Lipschitz, locally
  anti-Lipschitz time function it is possible to have global length control
  over the limit curve.
  
  \begin{theorem}[Null distance limit curve theorem]
    \label{thm_reparametrisation_to_control_limsup}
    Let $(M,g)$ be a 
    Lorentzian manifold, $f:M\to\R$ be a locally Lipschitz,
    locally anti-Lipschitz time
    function and
    let $d(\cdot, \cdot\,;f)$ be
    the null distance associated to $f$, 
    Definition \ref{def_null_distance}.
    Assume that there is a constant $b>0$ such that for all points where
    $\nabla f$ exists we have $g(\nabla f,\nabla f)\leq -b^2$.
    
    Let $(\gamma_i:[0,b_i)\to M)_{i\in\N}$ be a sequence of 
    past directed inextendible
    continuous causal curves in
    $M$, with $b_i\in\R^+\cup\{\infty\}$, such that 
    there exists $x\in M$ with $\gamma_i(0)\to x$.
    For each $i\in\N$ let
    $s_i:[0,a_i)\to[0,b_i)$ be the change in parameter
    constructed in Lemma \ref{lem_increasing_function_parameter}.
    
    Let $\gamma:[0,a)\to M$ be the limit curve constructed
    in Theorem \ref{thm_CurveUniformConvergenceInBoundedRegion_extension}
    from the sequence $(\gamma_i\circ s_i)_i$,
    and let $(\gamma_{i_k}\circ s_{i_k})_k$ be the subsequence of $(\gamma_i)$
    that
    $d(\cdot, \cdot; f)$-converges uniformly on compacta to $\gamma$.
    If 
    \begin{enumerate}
      \item both sequences $(L(\gamma_{i_k}\circ s_{i_k}))_i$ and
        $(a_{i_k})_k$ are bounded above, and
      \item $\limsup_k a_{i_k}=a$,
    \end{enumerate}
    then
    there exists a subsequence $(\gamma_{i_{k_j}})_j$ of
    $(\gamma_{i_k})$ so that
    \[
      L(\gamma)\geq \limsup_j L(\gamma_{i_{k_j}}).
    \]
    Moreover if for all $t\in[0,a)$,
    \[
      L(\gamma|_{[0,t]})=\lim_kL(\gamma_{i_k}\circ s_{i_k}|_{[0,t]})
    \]
    then $L(\gamma)=\lim_kL(\gamma_{i_k}\circ s_{i_k})$.
  \end{theorem}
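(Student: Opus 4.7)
The plan is to reduce this to an application of Lemma~\ref{lem_improved_length_bound}, with the Lipschitz constant there taken to be $1/b$, where $b$ is the assumed lower bound on $\sqrt{-g(\nabla f, \nabla f)}$. First, using the hypothesis $\limsup_k a_{i_k} = a$, I pass to a sub-subsequence (still indexed by $k$) with $\lim_k a_{i_k} = a$. This is the step that will let the curves $\gamma_{i_k}\circ s_{i_k}:[0,a_{i_k})\to M$ behave, in the limit, as though they shared the common domain $[0,a)$ required by Lemma~\ref{lem_improved_length_bound}.

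The central Lorentzian length bound is obtained by combining Lemma~\ref{lem_increasing_function_parameter} with Proposition~\ref{lem_time_function_parametrisation}. By construction of $s_{i_k}$, we have
\[
  d(\gamma_{i_k}\circ s_{i_k}(t_1), \gamma_{i_k}\circ s_{i_k}(t_2); f) = |t_1 - t_2|,
\]
while Proposition~\ref{lem_time_function_parametrisation} gives
\[
  d(\gamma_{i_k}\circ s_{i_k}(t_1), \gamma_{i_k}\circ s_{i_k}(t_2); f) \geq b\, L(\gamma_{i_k}\circ s_{i_k}|_{[t_1,t_2]}).
\]
Rearranging yields $L(\gamma_{i_k}\circ s_{i_k}|_{[t_1,t_2]}) \leq (1/b)\,|t_1-t_2|$, and letting $t_2 \nearrow a_{i_k}$ yields the tail estimate $L(\gamma_{i_k}\circ s_{i_k}|_{[t_1, a_{i_k})}) \leq (1/b)(a_{i_k} - t_1)$. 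The first hypothesis of Lemma~\ref{lem_improved_length_bound} (finiteness and uniform boundedness of the lengths) is granted outright, so both hypotheses are in hand, modulo the domain discrepancy.

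The main obstacle is that the curves $\gamma_{i_k}\circ s_{i_k}$ live on the varying intervals $[0,a_{i_k})$, whereas Lemma~\ref{lem_improved_length_bound} is stated for a common domain $[0,a)$. Inspecting the proof of that lemma, its engine is an Arzel\`a--Ascoli argument applied to the length-profile functions $F(t,k) = L(\gamma_{i_k}\circ s_{i_k}|_{[0,t]})$; I propose to extend each $F(\cdot,k)$ continuously to $[0,a]$ by declaring $F(t,k) = L(\gamma_{i_k}\circ s_{i_k})$ for $t \in [a_{i_k}, a]$. The Lipschitz bound of the previous paragraph guarantees equicontinuity of $F(\cdot,k)$ on $[0,a_{i_k})$, and the constant extension past $a_{i_k}$ introduces only a discrepancy of size $|a_{i_k} - a| \to 0$, which does not spoil uniform equicontinuity on $[0,a]$. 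With this adjustment, the Arzel\`a--Ascoli extraction followed by the Moore--Osgood exchange of limits, verbatim as in the proof of Lemma~\ref{lem_improved_length_bound}, produces a further subsequence $(\gamma_{i_{k_j}})_j$ along which $L(\gamma) \geq \limsup_j L(\gamma_{i_{k_j}}\circ s_{i_{k_j}})$, and reparametrisation invariance of $L$ identifies this with $\limsup_j L(\gamma_{i_{k_j}})$. The ``moreover'' clause follows at once from the equality case in Equation~\eqref{eq:two-ineqs} of the proof of Lemma~\ref{lem_improved_length_bound}, once the pointwise convergence hypothesis is supplied.
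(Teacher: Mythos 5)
Your proposal is correct, and it rests on the same two pillars as the paper's proof: the isometric parametrisation $d(\gamma_{i_k}\circ s_{i_k}(t_1),\gamma_{i_k}\circ s_{i_k}(t_2);f)=\abs{t_1-t_2}$ from Lemma \ref{lem_increasing_function_parameter} combined with Proposition \ref{lem_time_function_parametrisation} to get $L(\gamma_{i_k}\circ s_{i_k}|_{[t_1,t_2]})\leq \frac{1}{b}\abs{t_1-t_2}$, followed by the Arzel\`a--Ascoli/Moore--Osgood engine. Where you genuinely diverge is in handling the mismatch of domains $[0,a_{i_k})$ versus $[0,a)$. The paper keeps Lemma \ref{lem_improved_length_bound} as a black box: it introduces the linear reparametrisations $f_{i_k}(t)=a_{i_k}t/a$ onto the common domain, invokes Lemma \ref{lem_uni_for_repara} to get the Lipschitz constant $(1+\epsilon)/b$, re-verifies uniform convergence on compacta of the reparametrised sequence by a triangle-inequality estimate $d(\gamma_{i_k}\circ s_{i_k}\circ f_{i_k}(c),\gamma_{i_k}\circ s_{i_k}(c);f)\leq\abs{f_{i_k}(c)-c}$, and then needs an extra estimate on $L(\gamma_{i_k}\circ s_{i_k}|_{[t,f_{i_k}(t)]})$ to transfer the hypothesis of the ``moreover'' clause through $f_{i_k}$. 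You instead reopen the proof of Lemma \ref{lem_improved_length_bound} and extend the length-profile functions constantly past $a_{i_k}$; note that your extension is in fact exactly $\frac{1}{b}$-Lipschitz on all of $[0,a]$ (the case $t_1<a_{i_k}\leq t_2$ gives $L(\gamma_{i_k}\circ s_{i_k}|_{[t_1,a_{i_k})})\leq\frac{1}{b}(a_{i_k}-t_1)\leq\frac{1}{b}(t_2-t_1)$), so you need no approximate-equicontinuity argument at all. Your route avoids all the $\epsilon$-bookkeeping and makes the ``moreover'' clause immediate, at the cost of proving a variant of the lemma rather than citing it. Two small points you should make explicit in a final write-up: first, that Proposition \ref{prop:SV-metric} guarantees $d(\cdot,\cdot;f)$ is a genuine metric inducing the manifold topology (this is needed so that Theorem \ref{thm_upper_semi_cts} applies on each compact $[0,t]$, and the paper records it as the opening step); second, that before running Arzel\`a--Ascoli you should first pass to a subsequence realising $\limsup_k L(\gamma_{i_k}\circ s_{i_k})$, as the lemma's proof does, so that the subsequence you produce is the meaningful one.
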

  \begin{proof}
    Since $f$ is locally anti-Lipschitz $d(\cdot, \cdot;f)$
    is a definite metric, and as $f$ is continuous $d(\cdot, \cdot; f)$
    is compatible with the manifold topology 
    as in Proposition \ref{prop:SV-metric}.
    Hence,
    Lemmas 
    \ref{lem_time_function_parametrisation}
    and
    \ref{lem_increasing_function_parameter}
    allow us to apply 
    Theorem	\ref{thm_CurveUniformConvergenceInBoundedRegion_extension}
    using the null distance of $f$
    to the sequence $(\gamma_i\circ s_i)$.

    To prove the theorem we need to use
    Lemma \ref{lem_improved_length_bound}.
    Let $B\in\R^+$ be such that for all $i\in\N$
    we have $a_i\leq B$. Thus
    $a\leq\limsup_i a_i\leq B$. In particular, $a$, and
    $\limsup_ia_i$
    are finite.
    By taking a subsequence we can 
    assume that $\lim_ka_{i_k}=\limsup_ka_{i_k}$.
    For each $i\in\N$ define $f_i:[0,a)\to[0,a_i)$ by
    $f_i(x)=a_i x/a$.

    For each $k\in \N$ we 
    have $\gamma_{i_k}\circ s_{i_k}\circ f_{i_k}:[0,c)\to M$.
    By assumption the sequence 
    $(L(\gamma_{i_k}\circ s_{i_k}\circ f_{i_k}))_i$ is bounded above,
    which implies that each
    $L(\gamma_{i_k}\circ s_{i_k}\circ f_{i_k})$ is finite.

    Choose $\epsilon>0$.
        Lemma \ref{lem_uni_for_repara} implies that 
        by taking a further subsequence we can arrange that
    $\sup_k\sup\{f_{i_k}'(t): t\in[0,a]\}\leq 1 + \epsilon$.
    By assumption and Lemmas \ref{lem_time_function_parametrisation},
    and \ref{lem_increasing_function_parameter}
    we therefore know that, for each $k\in\N$
    and all $t_1,t_2\in [0,a)$,
    \begin{align*}
      L\left(\gamma_{i_{k}}\circ s_{i_{k}}\circ f_{i_{k}}|_{[t_1,t_2]}\right)
      &=
      L\left(
        \gamma_{i_{k}}\circ s_{i_{k}}|_{[f_{i_{k}}(t_1),f_{i_{k}}(t_2)]}
      \right)\\
      &\leq
      \frac{1}{b}\abs{f_{i_{k}}(t_1)-f_{i_{k}}(t_2) }
      \leq \frac{1+\epsilon}{b}\abs{t_1-t_2}.
    \end{align*}
    Since, for all $k\in\N$ and all $t_1,t_2\in [0,a)$,
    we have
    \[
      L\left(\gamma_{i_{k}}\circ s_{i_{k}}\circ f_{i_{k}}|_{[t_1,t_2]}\right)
      \leq \frac{1+\epsilon}{b}\abs{t_1-t_2}
    \]
    we know that
    \[
      L\left(\gamma_{i_{k}}\circ s_{i_{k}}\circ f_{i_{k}}|_{[t_1,a)}\right)
      =
      \lim_{t_2\to a}
      L\left(\gamma_{i_{k}}\circ s_{i_{k}}\circ f_{i_{k}}|_{[t_1,t_2]}\right)
      \leq 
      \lim_{t_2\to a}
      \frac{1+\epsilon}{b}\abs{t_1-t_2}
      =
      \frac{1+\epsilon}{b}\abs{t_1-a}
    \]
    as $a$ is finite.

    Therefore to use Lemma \ref{lem_improved_length_bound} it remains
    to show that $(\gamma_{i_k}\circ s_{i_k}\circ f_{i_k})_k$
    converges uniformly to $\gamma$ with respect to $d(\cdot, \cdot; f)$
    on compact subsets of $[0, a)$. This is a consequence
    of $\lim_ka_{i_k}=a$.

    Let $C\subset [0, a)$ be compact and define $K=\sup C$.
    Choose $\tilde{\epsilon}>0$.
    Let $J_1\in\N$ be such that $k\geq J_1$ implies that
    \[
      K\abs{\frac{a_{i_k}}{a} - 1}<\tilde{\epsilon}.
    \]
    Since $(\gamma_{i_k}\circ s_{i_k}|_C)_k$ uniformly
    converges to $\gamma|_C$ there exists $J_2$ so that
    $k\geq J_2$ implies that,
    for all $c\in C$,
    \[
      d(\gamma_{i_k}\circ s_{i_k}(c), \gamma(c);f)<\tilde{\epsilon}.
    \]
    Hence if $k\geq \max\{J_1, J_2\}$ we can compute, for
    all $c\in C$, that
    \begin{align*}
        d\gamma_{i_k}\circ s_{i_k}\circ f_{i_k}(c), \gamma(c); f)
      &\leq
        d(
          \gamma_{i_k}\circ s_{i_k}\circ f_{i_k}(c), 
          \gamma_{i_k}\circ s_{i_k}(c); 
          f
        )\\
      &\hspace{2cm}+
        d(
          \gamma_{i_k}\circ s_{i_k}(c), 
          \gamma(c);
          f
        )\\
      &\leq
        \abs{f_{i_k}(c) - c}
        +
        \tilde{\epsilon}\\
      &\leq
        K\abs{\frac{a_{i_k}}{a} - 1}
        +
        \tilde{\epsilon}\\
      &<
        2\tilde{\epsilon}.
    \end{align*}
    That is, the conditions of 
    Lemma \ref{lem_improved_length_bound} hold
    and hence there exists a subsequence
    $(\gamma_{i_{k_j}}\circ s_{i_{k_j}}\circ f_{i_{k_j}})_j$
    of
    $(\gamma_{i_{k}}\circ s_{i_{k}}\circ f_{i_{k}})_k$
    so that
    \[
      L(\gamma)\geq \limsup_j L(\gamma_{i_{k_j}}\circ s_{i_{k_j}}\circ f_{i_{k_j}})
      = \limsup_j L(\gamma_{i_{k_j}}\circ s_{i_{k_j}}),
    \]
    as required.

    Suppose further that
    for all $t\in[0,a)$,
    \[
      L(\gamma|_{[0,t]})=\lim_kL(\gamma_{i_k}\circ s_{i_k}|_{[0,t]}).
    \]
    Supposing that for all $k\in \N$ we have $f_{i_k}(t)> t$,
    \begin{align*}
      0\leq \lim_k L(\gamma_{i_k}\circ s_{i_k}|_{[t,f_{i_k}(t)]})
      \leq 
      \lim_k \frac{1}{b}\abs{t - \frac{a_{i_k}}{a}t}
      = 0,
    \end{align*}
    hence
    \begin{align*}
      L(\gamma|_{[0,t]})
      =
      \lim_k L(\gamma_{i_k}\circ s_{i_k}|_{[0,t]})
      =
      \lim_k L(\gamma_{i_k}\circ s_{i_k}\circ f_{i_k}|_{[0,t]})
    \end{align*}
    and hence by 
    Lemma \ref{lem_improved_length_bound},
    \[
      L(\gamma)=\lim_jL(\gamma_{i_{k_j}}\circ s_{i_{k_j}}\circ f_{i_{k_j}})
      =\lim_kL(\gamma_{i_{k_j}}\circ s_{i_{k_j}}).
    \]

    A similar argument holds if
    for all $k\in \N$ we have $f_{i_k}(t)\leq t$.
    Since we can restrict to a subsequence so that one of these
    two conditions hold the theorem is true.
  \end{proof}
  
  The global bounds on lengths in Theorem
  \ref{thm_reparametrisation_to_control_limsup} can be achieved for space-times
  whose past is bounded.
  
  \begin{theorem}
  \label{thm:reg-cosmo}
    If $(M,g)$ is a globally hyperbolic manifold with
    regular cosmological time $\tau$, then for any sequence 
    $(\gamma_i:[0,b_i)\to M)$, $b_i\in\R^+\cup\{\infty\}$,
    of
    past directed inextendible continuous causal curves
    so that $\gamma_i(0)\to x\in M$ conditions 1.\ and 2.\
    of Theorem \ref{thm_reparametrisation_to_control_limsup} hold.
  \end{theorem}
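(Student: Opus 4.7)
The plan is to verify the two hypotheses of Theorem \ref{thm_reparametrisation_to_control_limsup} with $f=\tau$; throughout I will write $\tau_0:=\tau(x)$ and use that a regular cosmological time is continuous and, on a globally hyperbolic manifold, has Cauchy level sets $\tau^{-1}(r)$ (Andersson--Galloway--Howard). Since $\tau\geq 0$ for a cosmological time, $(-\infty,\tau_0)\cap\range{\tau}=(0,\tau_0)\cap\range{\tau}$, so the hypotheses of Proposition \ref{lem_cauchy_sort_of} reduce to this interval.

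For condition 1, regularity gives $\tau\circ\gamma_i(t)\to 0$ as $t\to b_i$, so Lemma \ref{lem_increasing_function_parameter} yields $a_i=\tau(\gamma_i(0))$, and continuity of $\tau$ together with $\gamma_i(0)\to x$ gives $a_i\to\tau_0$, bounding $(a_i)$. For the length, whenever $L(\gamma_i|_{[0,t]})>0$ one has $\gamma_i(t)\in\past{\gamma_i(0)}$, so $L(\gamma_i|_{[0,t]})\leq d_L(\gamma_i(t),\gamma_i(0))\leq\tau(\gamma_i(0))$ by the definition of cosmological time; taking the supremum in $t$ and using reparametrisation invariance,
\[
  L(\gamma_i\circ s_i)=L(\gamma_i)\leq\tau(\gamma_i(0))\to\tau_0,
\]
which bounds the length sequence.

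For condition 2 I will apply Proposition \ref{lem_cauchy_sort_of}: its conclusion $a=\limsup_j a_{i_{k_j}}$ along a further subsequence combines with $a_{i_k}\to\tau_0$ to give $a=\tau_0=\limsup_k a_{i_k}$. To check its hypothesis, fix $r\in(0,\tau_0)\cap\range{\tau}$. Each $\gamma_{i_k}$ is past-inextendible and causal, so it meets the Cauchy hypersurface $\tau^{-1}(r)$ whenever $\tau(\gamma_{i_k}(0))>r$, which holds for all large $k$ by continuity of $\tau$ and $\gamma_{i_k}(0)\to x$. For the limit curve, choose a compact neighborhood $K_0$ of $x$ with $\gamma_{i_k}(0)\in K_0$ eventually, and set
\[
  y_k:=\gamma_{i_k}\circ s_{i_k}\bigl(\tau(\gamma_{i_k}(0))-r\bigr)\in\jpast{K_0}\cap\tau^{-1}(r).
\]
Global hyperbolicity makes $\jpast{K_0}\cap\tau^{-1}(r)$ compact (past of a compact set intersected with a Cauchy hypersurface), so along a subsequence $y_{k_j}\to y_*\in\tau^{-1}(r)$. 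Moreover, for small $\delta>0$ with $r-\delta\in\range{\tau}$, the restrictions $\gamma_{i_{k_j}}\circ s_{i_{k_j}}|_{[0,\tau_0-r+\delta]}$ lie eventually inside the compact causal diamond $\jpast{K_0}\cap\jfuture{\tau^{-1}(r-\delta)}$, so Proposition \ref{prop_CurveUniformConvergenceInBoundedRegion} extracts from them a limit curve through $x$ on $[0,\tau_0-r]$ passing through $y_*$ at parameter $\tau_0-r$. By the maximality convention for the limit curve, this limit curve is a subcurve of $\gamma$, so $\gamma(\tau_0-r)=y_*\in\tau^{-1}(r)$ as required.

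The main obstacle is this final step: combining the compact-diamond property of globally hyperbolic spacetimes with the local limit curve theorem to guarantee that the subsequential accumulation point $y_*$ actually lies on the maximal limit curve $\gamma$. The Cauchy-hypersurface structure of the level sets of the regular cosmological time and the careful sandwiching of the tails of the curves between $\jpast{K_0}$ and $\jfuture{\tau^{-1}(r-\delta)}$ are both essential; once in place, everything reduces to Proposition \ref{prop_CurveUniformConvergenceInBoundedRegion} and the definition of the maximal limit curve.
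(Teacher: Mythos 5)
Your proposal is correct and follows the same overall strategy as the paper, but it supplies substantially more of the argument than the paper's own (very terse) proof, and the extra content is genuinely needed. The paper's proof simply cites Andersson--Galloway--Howard for continuity/local Lipschitzness, Sormani--Vega for anti-Lipschitzness, observes that regularity gives $\lim_{t\to b_i}\tau(\gamma_i(t))=0$ so that ``the conditions of Proposition \ref{lem_cauchy_sort_of} hold,'' and concludes boundedness from finiteness of $\tau$. Your explicit bounds $a_i=\tau(\gamma_i(0))\to\tau(x)$ and $L(\gamma_i)\le d_L(\gamma_i(t),\gamma_i(0))\le\tau(\gamma_i(0))$ are exactly the intended (unstated) justification of condition 1. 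Where you go beyond the paper is in verifying the hypothesis of Proposition \ref{lem_cauchy_sort_of} concerning the \emph{limit curve} $\gamma$ itself: since $\tau(\gamma(t))=\tau(x)-t$ on $[0,a)$, the requirement that $\gamma$ meet $\tau^{-1}(r)$ for every $r\in(0,\tau(x))\cap\range{\tau}$ is essentially equivalent to the conclusion $a=\tau(x)$, so the paper's appeal to Proposition \ref{lem_cauchy_sort_of} silently presupposes the hard part; your compactness argument (trapping the tails of the $\gamma_{i_k}\circ s_{i_k}$ in $\jpast{K_0}\cap\jfuture{\tau^{-1}(r-\delta)}$, which is compact because $K_0\subset\interior{D^+(\tau^{-1}(r-\delta))}$ by regularity and the intermediate value theorem) is the right way to close that loop, and in fact once you have $\gamma(\tau(x)-r)\in\tau^{-1}(r)$ for all $r\in(0,\tau(x))$ you get $a=\tau(x)$ directly without needing to route back through Proposition \ref{lem_cauchy_sort_of}. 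Two small repairs are needed in that step: the lower bound $\tau(\gamma_{i_k}\circ s_{i_k}(t))\ge\tau(\gamma_{i_k}(0))-(\tau(x)-r+\delta)$ only exceeds $r-\delta$ after also using $\tau(\gamma_{i_k}(0))\to\tau(x)$ (so work with $\tau^{-1}(r-2\delta)$, say), and the appeal to ``the maximality convention'' should be replaced by the observation that containment in a fixed compact set forces $a^j\ge\tau(x)-r$ in the compact-exhaustion construction of Theorem \ref{thm_CurveUniformConvergenceInBoundedRegion_extension}. Neither affects the validity of the argument.
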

  \begin{proof}
    By \cite[Theorem 1.2(2 and 5)]{andersson1998cosmological}
    the regular cosmological time is continuous and locally Lipschitz.
    By \cite[Theorem 5.4]{sormani2016null} the 
    regular cosmological time is anti-Lipschitz.

    Since the cosmological time $\tau$ is regular
    on any past directed inextendible curve, $\gamma:[0,b_i)\to M$,
    we know that $\lim_{t\to b_i}\tau\circ\gamma(t)=0$.
    This implies that the cosmological time satisfies the
    conditions of Proposition \ref{lem_cauchy_sort_of}.
    As the cosmological time is regular it is also finite, hence
    the result holds.
  \end{proof}

\section{Null distances of surface functions}
  \label{subsec_surface_function_generalities}

  To apply
  Theorem \ref{thm_reparametrisation_to_control_limsup}
  we need
  a function $f:M\to\R$ that is
  \begin{enumerate}[noitemsep]
    \item locally Lipschitz, 
    \item locally anti-Lipschitz, 
    \item a time function, and such that
    \item there exists some $b\in\R^+$ so that $g(\nabla f, \nabla f)\leq -b^2$
      wherever $\nabla f$ exists, 
  \end{enumerate}
  and from Proposition 
  \ref{lem_cauchy_sort_of}, the sequence of curves needs to satisfy:
  for all $r\in(-\infty, f(\gamma(0)))\cap\range{f}$
  we have $\gamma\cap f^{-1}(r)\neq\EmptySet$ and
  there exists $N\in\N$ so that for all
  $k\geq N$ we have $\gamma_{i_{k}}\cap f^{-1}(r)\neq\EmptySet$.
  
  We shall call the first four of these conditions the regularity conditions
  and we shall call the last condition the geometric condition.
  Theorem \ref{thm:reg-cosmo} proves that regular cosmological
  time satisfies both the regularity and the geometric conditions.

  The geometric condition depends on the given sequence of inextendible curves,
  and so the function $f$ could in principle  be tailored for application to a
  specific sequence of curves.
  The geometric condition is always true if the level sets of $f$ are Cauchy.
  In the remainder of this section we shall show that the surface function
  associated to a $C^1$ Cauchy surface satisfies the regularity conditions,
  Corollary \ref{corl:surf}.

  Before we prove our claim it is worth noting that
  M\"uller and S\'anchez, \cite[Theorem 1.2]{muller2011lorentzian},
  have shown the existence of a smooth surjective
  Cauchy time function 
  $f:M\to\R$ 
  on any globally
  hyperbolic manifold
  so that $g(\nabla f, \nabla f)\leq -1$.
  Since $f$ is Cauchy it satisfies the geometric condition.
  Since the function is smooth it is locally Lipschitz.
  Sormani and Vega have shown, \cite[Corollary 4.16]{sormani2016null},
  that such a function is also anti-Lipschitz.
  Thus
  M\"uller and S\'anchez' time function also satisfies the regularity conditions
  except for the bound on gradient length.

  If one could prove that a lower bound for
  $g(\nabla f, \nabla f)$ existed then 
  M\"uller and S\'anchez's time function would satisfy both the
  regularity and the geometric conditions.
  The existence of such a lower bound is tied to the geometry of the manifold.
  In truncated Minkowski space, i.e.\ $\R\times(0,\infty)$, no such
  bound exists since $f$ surjects onto $\R$, though, of course, the standard time 
  function given by projection does satisfy the regularity 
  and geometric conditions.

  We remind the reader that our Cauchy surfaces are, by definition,
  acausal \cite[page 65]{beem1996global}.

  We shall rely on the following technical result. 
  \begin{lemma}
    \label{lem_basic_gh_curve_existence_result}
    Let $(M,g)$ be globally hyperbolic, let $S\subset M$ be a Cauchy surface, 
    let $\tau_S:M\to\R$ the surface function of $S$,
    let $x\in \future{S}$
    and let $h$ be an auxiliary Riemannian
    metric.
    Let $(x_i)_{i\in\N}$ be a sequence of points so that 
    $(x_i)\subset \future{x}$ and $x_{i+1}\in\past{x_i}$,
    $x_i\to x$. 
    If $(\gamma_i:[0,b_i]\to M)_{i\in\N}$, $b_i\in\R^+$,
    is a sequence
    of $h$-arc length parametrised, past directed,
    causal curves so that for all $i\in\N$, 
    $\gamma_i(0)=x_i$, $\gamma_i(b_i)\in S$,
    and $L(\gamma_i)\geq \tau_S(x_i)-1/i$, then
    there exists 
    an $h$-arc length parametrised,
    past directed,
    timelike curve $\gamma:[0,a]\to M$, $a\in\R^+$,
    so that  $\gamma(0)=x$ and $\gamma(a)\in S$
    and is such that
    \begin{enumerate}
      \item there exists
        a subsequence $(\gamma_{i_k})_k$ 
        of $(\gamma_i)_i$ which
        $d(\cdot,\cdot;h)$-converges to $\gamma$ uniformly on compacta,
      \item $\tau_S(x) = L(\gamma) 
        = d_L(\gamma(a),\gamma(0)) = \lim_k\tau_S(x_{i_k})$, and
      \item $\gamma$ can be re-parametrised as a smooth timelike geodesic.
    \end{enumerate}
  \end{lemma}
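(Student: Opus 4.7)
My approach confines everything to a compact set via global hyperbolicity, extracts a limit curve, and then identifies its Lorentzian length via a three-way sandwich. First, since $x_1\in\future{x}$ and $x_{i+1}\in\past{x_i}$, every $x_i$ lies in $\jpast{x_1}$, and every image $\gamma_i([0,b_i])$ lies in the compact causal diamond $K:=\jpast{x_1}\cap\jfuture{S}$ (compact because $M$ is globally hyperbolic). Causal curves contained in a compact set have uniformly bounded $h$-arc length, so there exists $B>0$ with $b_i\leq B$ for every $i$. Each $\gamma_i$ is $1$-Lipschitz for $d(\cdot,\cdot;h)$, so Theorem \ref{thm_CurveUniformConvergenceInBoundedRegion_extension} yields a subsequence $(\gamma_{i_k})$ and a continuous limit curve $\gamma:[0,a)\to K$ with $\gamma(0)=x$ and $a\leq\limsup_k b_{i_k}\leq B$; by Lemma \ref{lem.pointwiseconvergenceforcausalcharacter}, $\gamma$ is causal. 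Since $K$ is compact and the curves are uniformly equicontinuous, $\gamma$ extends continuously to $[0,a]$. Passing to a further subsequence I arrange $b_{i_k}\to a$: otherwise one could apply the theorem again at a cluster point beyond $a$ and extend $\gamma$, contradicting the maximality of $a$. Reparametrising each $\gamma_{i_k}$ by $t\mapsto(b_{i_k}/a)t$ yields curves on the common compact domain $[0,a]$ that still converge uniformly to $\gamma$; their endpoints $\gamma_{i_k}(b_{i_k})\in S$ therefore converge to $\gamma(a)$, which lies in $S$ since $S$ is closed.

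To identify $L(\gamma)=\tau_S(x)=\lim_k\tau_S(x_{i_k})$ I chain three inequalities. First, Theorem \ref{thm_upper_semi_cts} applied on the common compact domain $[0,a]$ gives
\[
  L(\gamma)\geq\limsup_k L(\gamma_{i_k})\geq\limsup_k\!\left(\tau_S(x_{i_k})-\tfrac{1}{i_k}\right)=\limsup_k\tau_S(x_{i_k}).
\]
Second, since $\gamma$ is a causal curve from $x$ to $\gamma(a)\in S$,
\[
  L(\gamma)\leq d_L(\gamma(a),\gamma(0))\leq\sup_{s\in S}d_L(s,x)=\tau_S(x).
\]
Third, because $x_{i_k}\in\future{x}$, the reverse triangle inequality for the Lorentzian distance gives $d_L(s,x_{i_k})\geq d_L(s,x)+d_L(x,x_{i_k})\geq d_L(s,x)$ for every $s\in S$, so $\tau_S(x_{i_k})\geq\tau_S(x)$, whence $\liminf_k\tau_S(x_{i_k})\geq\tau_S(x)$. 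Composing these forces every term in the chain $\tau_S(x)\leq\liminf_k\tau_S(x_{i_k})\leq\limsup_k\tau_S(x_{i_k})\leq L(\gamma)\leq d_L(\gamma(a),\gamma(0))\leq\tau_S(x)$ to be equal, proving both the claimed identity and $L(\gamma)=d_L(\gamma(a),\gamma(0))$.

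Finally, since $x\in\future{S}=I^+(S)$ there exists $s\in S$ with $s\ll x$, so $\tau_S(x)\geq d_L(s,x)>0$ and therefore $L(\gamma)>0$. Having established that $\gamma$ is a causal curve of positive length that realises the Lorentzian distance between its endpoints, standard Lorentzian theory (compare \cite[Chapter 14]{beem1996global}) shows that, after reparametrisation, $\gamma$ is a smooth timelike geodesic.

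The main obstacle is the passage $b_{i_k}\to a$ with $\gamma(a)\in S$. Theorem \ref{thm_CurveUniformConvergenceInBoundedRegion_extension} only guarantees $a\leq\limsup_k b_{i_k}$, and Example \ref{ex_convergence_in_incomplete} shows that the ``second half'' of the limit curve can fall off in general; what rules out this pathology here is precisely the compactness of $K$ together with the closedness of $S$. Once $\gamma(a)\in S$ and $b_{i_k}\to a$ are secured, the length sandwich and the geodesic character of $\gamma$ are essentially formal.
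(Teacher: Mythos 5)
Your proposal is correct in substance and shares the paper's overall architecture (confinement to a compact set, an Arzel\`a--Ascoli-type extraction, upper semi-continuity of $L$, a sandwich of inequalities pinning $L(\gamma)=\tau_S(x)=\lim_k\tau_S(x_{i_k})=d_L(\gamma(a),\gamma(0))$, and then distance-realisation forcing a timelike geodesic), but it differs in one structural choice. The paper secures the endpoint on $S$ by first extending each $\gamma_i$ past $S$ to a second Cauchy surface $S'\subset\past{S}$ and working in the compact set $\jpast{y}\cap\jfuture{S'}$ for some $y\in\future{x}$; the crossing of $S$ then happens at an interior parameter $t<a$ of the limit curve, and the lemma's curve is the restriction $\gamma|_{[0,t]}$. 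You instead stay with the original curves, use compactness of $\jpast{x_1}\cap\jfuture{S}$, and extend the Lipschitz limit curve continuously to the closed interval $[0,a]$ (valid, since a compact set is $d(\cdot,\cdot;h)$-complete), letting closedness of $S$ place $\gamma(a)$ on $S$. Your route is a little leaner, avoiding the auxiliary surface $S'$; the paper's route avoids any discussion of endpoint behaviour of the limit curve. The one soft spot in your write-up is the step ``passing to a further subsequence I arrange $b_{i_k}\to a$: otherwise \dots contradicting the maximality of $a$'': Theorem \ref{thm_CurveUniformConvergenceInBoundedRegion_extension} asserts no maximality of $a$, only $a\le\limsup_k b_{i_k}$, so as written this is not a proof. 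It is, however, immediately repaired by invoking Proposition \ref{prop_CurveUniformConvergenceInBoundedRegion} (which applies verbatim since all curves lie in the single compact set $K$, and which delivers both $\lim_k b_{i_k}=\limsup_i b_i=a$ and the limit curve on the closed interval), exactly as the paper does; with that substitution your argument closes, and the endpoint convergence $\gamma_{i_k}(b_{i_k})\to\gamma(a)$ follows from the uniform Lipschitz bound as you indicate.
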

  \begin{proof}
    Choose $S'$ a second Cauchy surface so that
    $S'\subset\past{S}$.
    Since $S'$ is Cauchy we can extend each $\gamma_i$
    to $\tilde{\gamma}_i\in\Omega_{S', x_i}$.
    We will parametrise each extended $\tilde{\gamma}_i$ 
    by $h$ induced arc-length.
    Thus we have
    $\tilde{\gamma}_i:[0, a_i)\to M$, 
    $a_i\in(0,\infty]$,
    so that $L(\tilde{\gamma}_i\cap\future{S})\geq \tau_{S}(x_i)-1/i$
    and for all $i\in\N$ and $t_1, t_2\in [0,a_i)$ we have
    $d(\tilde{\gamma}_i(t_1),\tilde{\gamma}_i(t_2);h)\leq \abs{t_1-t_2}$.

    Choose $y\in\future{x}$.
    Thus $y\in\future{x}\subset\future{S}\subset\future{S'}=
    \interior{D^+(S')}$
    as $S'$ is achronal, by definition,
    \cite[Proposition 5.20]{penrose1972techniques}
    implies that
    $\jpast{y}\cap\jfuture{S'}$ is compact.
    As $\jpast{y}\cap\jfuture{S'}$ is compact
    and as each $\tilde{\gamma}_i$ is extendible, we see that in fact the $a_i$
    are finite and indeed there
    exists $b\in\R^+$ so that $a_i\leq b$, 
    \cite[Theorem 1.35]{minguzzi2019lorentzian}.

    Since $\jpast{y}\cap\jfuture{S'}$ is compact, 
    Proposition 
    \ref{prop_CurveUniformConvergenceInBoundedRegion}
    implies that there exists 
    \begin{enumerate}
      \item a continuous curve $\gamma:[0,a)\to M$, 
        $a=\limsup_ia_i<\infty$,
      \item a sequence of re-parametrisations $f_i:[0,a)\to [0, a_i)$,
      \item a subsequence $(\gamma_{i_k})_k$ of $(\gamma_i)$ so that
        $(\gamma_{i_k}\circ f_{i_k})_k$ will
        $d(\cdot,\cdot;h)$-converge uniformly 
        on compact subsets of $[0,a)$ to
        $\gamma$ and so that $\lim_k a_{i_k}=a$.
    \end{enumerate}

    Since for all $i\in\N$, $a_i<\infty$ and $a<\infty$
    we know that $f_i(x)=a_ix/a$, Lemma \ref{lem_uni_for_repara}.
    As $a=\lim_ka_{i_k}$
    we know that there exists $K\in\R^+$
    so that $\sup\{f_i'(t):t\in[0,a]\}=a_i/a \leq K$.
    This implies that $(\gamma_{i_k}\circ f_{i_k})_k$ will
    $d(\cdot,\cdot;h)$-converge uniformly to
    $\gamma$ on compacta, in the sense of Definition
    \ref{defn:limit-curve}.

    Since for all $i\in \N$, $\gamma_i\cap S\neq\EmptySet$
    we see that $\gamma\cap S\neq\EmptySet$. Thus
    there exists $t\in[0,a)$ so that
    $\gamma(t)\in S$. Similarly for each $k$ there exists $t_k\in[0,a)$ so
    that $\gamma_{i_k}\circ f_{i_k}(t_k)\in S$. 
    Similarly to the proof that $\jpast{y}\cap\jfuture{S'}$ is compact,
    \cite[Proposition 5.20]{penrose1972techniques}
    implies that
    the set 
    $\jpast{y}\cap \jfuture{S}$ is compact.
    Hence $t_k\to t$ by the uniform convergence of
    the $(\gamma_{i_k})$.
    By Proposition
    \ref{thm_upper_semi_cts}
    \[
      L(\gamma|_{[0,t]})\geq 
        \limsup_k L(\gamma_{i_k}\circ f_{i_k}|_{[0,t]})
        =\limsup_k L(\gamma_{i_k}\circ f_{i_k}|_{[0,t_k]}).
    \]
    Since $x_{i_k}\in \future{x}$ we have $\tau_S(x_{i_k})\geq\tau_S(x)$.
    Hence we also have $\limsup_k\tau_S(x_{i_k})\geq\tau_S(x)$.
    Therefore 
    \begin{align}
      \limsup_k\tau_S(x_{i_k})
      &\geq
      \tau_S(x)
      \geq 
      L(\gamma|_{[0,t]})\nonumber \\
      &\geq 
      \limsup_k L(\gamma_{i_k}\circ f_{i_k}|_{[0,t]})
      =
      \limsup_k L(\gamma_{i_k}\circ f_{i_k}|_{[0,t_k]})\nonumber \\
      &\geq
      \limsup_k \tau_S(x_{i_k}) - 1/{i_k}\nonumber \\
      &=
      \limsup_k \tau_S(x_{i_k}).
    \label{eq:one-way}
    \end{align}
    That is, we have shown that
    $\tau_S(x)=L(\gamma|_{[0,t]})=\limsup_k\tau_S(x_{i_k})$. 
    Since $x_{i+1}\in\past{x_i}$
    the sequence $(\tau_S(x_{i_{k}}))_k$ is decreasing.
    Thus $\limsup_k\tau_S(x_{i_k})=\lim_k\tau_S(x_{i_k})$.
    Since 
    $
      d_L(\gamma(t),\gamma(0))\geq L(\gamma|_{[0,t]})=\tau_S(\gamma(0))\geq
      d_L(\gamma(t), \gamma(0)),
    $ 
    Theorem 4.13 of 
    \cite{beem1996global}
    implies that
    $\gamma|_{[0,t]}$ can be reparametrised as a smooth timelike geodesic.
    This implies that $\gamma|_{[0,t]}$ is a timelike curve
    and so gives the result.
  \end{proof}

  As a corollary we get the following result.
  \begin{corollary}
    \label{lem_surface_function_exists}
    If $(M,g)$ is globally hyperbolic and $S\subset M$ is a Cauchy surface
    then $M$ is the disjoint union
    $M=\future{S}\cup S\cup \past{S}$. Moreover
    for all $x\in \future{S}$, $d_L(S, x)<\infty$
    and for all $x\in\past{S}$, $d_L(x,S)<\infty$.
  \end{corollary}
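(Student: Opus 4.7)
The plan is to prove the decomposition and the finiteness separately, via standard properties of globally hyperbolic spacetimes rather than by invoking Lemma \ref{lem_basic_gh_curve_existence_result} (which in any case presupposes that $\tau_S$ is well-defined on the relevant points, so appealing to it here would be at best circular).

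For the decomposition $M = \future{S} \cup S \cup \past{S}$, I would fix $x \in M$ and apply the defining property of the Cauchy surface $S$ to an inextendible timelike curve $\sigma$ through $x$: such a $\sigma$ meets $S$ at a unique point $s$. If $s = x$ then $x \in S$; otherwise the relative position of $s$ and $x$ along $\sigma$ gives either $s \ll x$, placing $x \in \future{S}$, or $x \ll s$, placing $x \in \past{S}$. Disjointness follows from acausality of $S$: if $x \in \future{S} \cap \past{S}$ there would exist $s_1, s_2 \in S$ with $s_1 \ll x \ll s_2$ and hence $s_1 \ll s_2$, contradicting acausality; the identities $S \cap \future{S} = S \cap \past{S} = \EmptySet$ follow for the same reason.

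For the finiteness $d_L(S, x) < \infty$ when $x \in \future{S}$, I would first reduce the supremum to points which actually contribute: since $d_L(s, x) = 0$ unless $s \leq x$, one has $d_L(S, x) = \sup\{d_L(s, x) : s \in S \cap \jpast{x}\}$. The plan is then to invoke the classical fact that, for a Cauchy surface $S$ in a globally hyperbolic spacetime, $S \cap \jpast{K}$ is compact whenever $K \subset M$ is compact, applied here with $K = \{x\}$. Combined with the continuity and finiteness of $d_L$ on globally hyperbolic manifolds, the continuous function $s \mapsto d_L(s, x)$ on the compact set $S \cap \jpast{x}$ attains its maximum, yielding $d_L(S, x) < \infty$. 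The case $x \in \past{S}$ then follows by time duality.

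The main obstacle, such as it is, lies in assembling the right classical facts: the three ingredients needed are (i) that every inextendible timelike curve meets an acausal Cauchy surface exactly once, (ii) that $S \cap J^{\pm}(K)$ is compact for $S$ Cauchy and $K \subset M$ compact, and (iii) that $d_L$ is continuous and finite on a globally hyperbolic spacetime. All three are standard and can be sourced from \cite{beem1996global} or \cite{penrose1972techniques}; once they are in hand, the proof of the corollary is essentially a bookkeeping exercise.
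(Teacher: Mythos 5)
Your proposal is correct, and for the finiteness claim it takes a genuinely different route from the paper. For the decomposition both arguments are essentially the same (the Cauchy property places every point on one side of $S$ or on $S$, and acausality gives disjointness); the paper just states this more tersely via $\interior{D^+(S)}=\future{S}$. For finiteness, the paper invokes Lemma \ref{lem_basic_gh_curve_existence_result}: a limit-curve construction producing a maximizing causal curve with compact image from $x$ to $S$, whose (finite) Lorentzian length equals $d_L(S,x)$. You instead reduce the supremum to $S\cap\jpast{x}$, observe that this set is compact (it is a closed subset of the compact set $\jpast{x}\cap\jfuture{S}$), and use continuity and finiteness of $d_L$ on a globally hyperbolic manifold to bound the supremum. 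Your route is more elementary and self-contained, and your circularity concern is legitimate: Lemma \ref{lem_basic_gh_curve_existence_result} is stated in terms of $\tau_S$, whose well-definedness (per Definition \ref{def_surface_function}) is exactly what the corollary is establishing, so the paper's argument only works if one reads $\tau_S(x_i)$ there as the a priori possibly infinite supremum $d_L(S,x_i)$ and checks that the chain of inequalities in Equation \eqref{eq:one-way} still forces finiteness via $L(\gamma|_{[0,t]})<\infty$. What the paper's heavier route buys is reuse: the maximizing geodesic produced by Lemma \ref{lem_basic_gh_curve_existence_result} is needed again in Proposition \ref{prop_tau_prop} and Lemma \ref{lem_tau_gh_is_cts}, so the authors get the corollary essentially for free from machinery they must build anyway, whereas your argument would require separately citing the compactness of $S\cap\jpast{K}$ and the continuity of $d_L$.
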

  \begin{proof}
    By assumption and as $S$ is Cauchy the interior of $D^+(S)$ is
    $\future{S}$. Since $S$ is acausal 
    we have
    that $M$ is the disjoint union
    $M=\future{S}\cup S\cup \past{S}$. 
    Lemma \ref{lem_basic_gh_curve_existence_result}
    implies that there exists a curve with compact
    image which attains the distance to the
    surface. This implies that the distance to the surface is
    finite. The result follows by time duality.
  \end{proof}
    
  \begin{lemma}
    \label{lem_tau_gh_is_cts}
    If $(M,g)$ is globally hyperbolic and $S\subset M$ is a Cauchy surface,
    then $\tau_S$ is continuous.
  \end{lemma}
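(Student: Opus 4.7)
The plan is to prove continuity in three stages: on the open set $\future{S}$, on the open set $\past{S}$ (reducing to the first by time duality), and at points of $S$ itself. On the open sets I will establish upper and lower semi-continuity separately, using the limit curve machinery built up in the paper together with the continuity of $d_L$ on globally hyperbolic manifolds. At points of $S$, the additional ingredient is the achronality of $S$, which forces any limit of maximizing curves starting at $x\in S$ to be trivial.

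First I would prove upper semi-continuity on $\future{S}$. Fix $x\in\future{S}$ and a sequence $x_i\to x$; by openness of $\future{S}$ we may assume $x_i\in\future{S}$ for all $i$. For each $i$ choose an $h$-arc length parametrised past directed causal curve $\gamma_i$ from $x_i$ to $S$ with $L(\gamma_i)\geq \tau_S(x_i)-1/i$. Pick $y\in\future{x}$ and a Cauchy surface $S'\subset\past{S}$; eventually $x_i\in\past{y}$, so the $\gamma_i$ lie in the compact set $\jpast{y}\cap\jfuture{S'}$. Exactly as in the proof of Lemma \ref{lem_basic_gh_curve_existence_result}, Proposition \ref{prop_CurveUniformConvergenceInBoundedRegion} extracts a subsequential limit curve $\gamma$ from $x$ to some $p\in S$, and Theorem \ref{thm_upper_semi_cts} gives
\[
L(\gamma)\geq \limsup_k L(\gamma_{i_k})\geq \limsup_k\tau_S(x_{i_k}).
\]
Since $L(\gamma)\leq d_L(p,x)\leq \tau_S(x)$, the upper bound $\limsup_i\tau_S(x_i)\leq \tau_S(x)$ follows.

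For lower semi-continuity on $\future{S}$ I would apply Lemma \ref{lem_basic_gh_curve_existence_result} (to any monotone sequence of points descending to $x$) to produce a point $p^*\in S$ and a timelike geodesic achieving $\tau_S(x)=d_L(p^*,x)>0$; in particular $p^*\ll x$. Then $\future{p^*}$ is an open neighbourhood of $x$, so eventually $x_i\in\future{p^*}$, and by continuity of $d_L$ on a globally hyperbolic manifold,
\[
\liminf_i\tau_S(x_i)\geq \liminf_i d_L(p^*,x_i)=d_L(p^*,x)=\tau_S(x).
\]
This completes continuity on $\future{S}$, and time duality handles $\past{S}$. For $x\in S$ and $x_i\to x$, split the sequence into its sub-sequences in $\future{S}$, $S$, and $\past{S}$; on $S$ the values are identically zero, and time duality reduces us to the future sub-sequence. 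Repeat the limit curve construction of the upper semi-continuity step: this time $\gamma$ runs from $x\in S$ to some $p\in S$. If $L(\gamma)>0$ then the maximizing timelike geodesic argument forces $p\ll x$, contradicting achronality of $S$, so $L(\gamma)=0$ and $\limsup_k\tau_S(x_{i_k})\leq 0$; combined with $\tau_S(x_{i_k})>0$ this forces convergence to zero.

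The main obstacle is that Lemma \ref{lem_basic_gh_curve_existence_result} is formulated only for monotone approximating sequences, whereas for continuity I need the limit curve construction for arbitrary $x_i\to x$. The resolution is that the monotonicity in that lemma is used only to promote $\limsup$ to $\lim$ at the final step; the compactness of $\jpast{y}\cap\jfuture{S'}$ and the application of Proposition \ref{prop_CurveUniformConvergenceInBoundedRegion} apply verbatim in the non-monotone setting, and are exactly what I need to bound $\limsup_i\tau_S(x_i)$ from above.
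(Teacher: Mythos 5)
Your proof is correct and rests on the same engine as the paper's: extract a limit of near-maximising curves inside the compact set $\jpast{y}\cap\jfuture{S'}$ and use upper semi-continuity of $L$, i.e.\ the content of Lemma \ref{lem_basic_gh_curve_existence_result}. The organisation differs, though. The paper argues by contradiction and reduces an arbitrary bad sequence $(y_i)$ to a monotone sequence $(x_i)\subset\future{x}$ descending to $x$, using that each $y_i$ eventually lies in $\past{x_j}$ so that $\tau_S(y_i)\leq\tau_S(x_j)$; the monotone case is then exactly item (2) of Lemma \ref{lem_basic_gh_curve_existence_result}, which gives $\lim_k\tau_S(x_{i_k})=\tau_S(x)$. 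You instead rerun the limit-curve extraction for the arbitrary sequence, and your observation that the monotonicity and the hypothesis $x_i\in\future{x}$ are used in that lemma only to close the chain of inequalities \eqref{eq:one-way} into an equality --- not for the one-sided bound $\tau_S(x)\geq d_L(p,x)\geq L(\gamma)\geq\limsup_k\tau_S(x_{i_k})$ --- is accurate. Your write-up is also more complete than the paper's in two respects: you handle lower semi-continuity explicitly (the paper leaves it implicit; it is automatic since on $\future{S}$ the function $\tau_S=\sup_{s\in S}d_L(s,\cdot)$ is a supremum of lower semi-continuous functions, or by your maximiser argument), and you treat points of $S$ explicitly via acausality of $S$, whereas the paper's proof only addresses $x\in\overline{\future{S}}$ and the failure mode $\lim\tau_S(y_i)>\tau_S(x)$. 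The only routine point left tacit in your argument is that the bound on $\limsup_i\tau_S(x_i)$ for the full sequence is obtained by first passing to a subsequence realising that $\limsup$ before extracting the limit curve.
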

  \begin{proof}
    Suppose that $\tau_S$ is discontinuous at $x\in\overline{\future{S}}$.
    That is, there exists a sequence $(y_i)_{i\in\N}\subset M$
    with $y_i\to x$ and $\lim_{i\to\infty}\tau_S(y_i)>\tau_S(x)$.
    Choose $(x_i)_i\in\N\subset\future{x}$ so that
    $x_{i+1}\subset\past{x_i}$ and $x_i\to x$.
    Since $x\in\past{x_i}$ we see that
    $\lim_{i\to\infty}\tau_S(x_i)\geq\lim_{i\to\infty}\tau_S(y_i)>\tau_S(x)$.
    We can now find curves $\gamma_i\in \Omega_{S,x_i}$ with
    $\tau_S(x_i)=L(\gamma_i)-1/i$, by definition of $\tau_S$. 
    Consequently the existence of such a sequence of points $(y_i)$ contradicts
    Lemma \ref{lem_basic_gh_curve_existence_result}, hence we have the result.
  \end{proof}

  \begin{proposition}
    \label{prop_tau_prop}
    Let $(M,g)$ be globally hyperbolic, let $S\subset M$ be a Cauchy surface, and
    $\tau_S:M\to\R$ the surface function of $S$.
    For all $x\in\future{S}$ there exists $\gamma:[0,a]\to M$, $a\in\R^+$, a 
    future directed, $g$-arc length parametrised,
    timelike smooth geodesic from $S$ to $x$ so that
    \begin{enumerate}
      \item for all $u,v\in[0,a]$, $u<v$, 
        $L(\gamma|_{[u,v]})=
          d_L(\gamma(u),\gamma(v))=\tau_S(\gamma(v))-\tau_S(\gamma(u))$;
      \item for all $u\in[0,a]$,
        if $\nabla\tau_S$ exists at $\gamma(u)$
        then $\gamma'(u)=-\nabla\tau_S|_{\gamma(u)}$ and, in particular,
        \[
          g(\nabla\tau_S|_{\gamma(u)},\nabla\tau_S|_{\gamma(u)})=-1.
        \]
    \end{enumerate}
  \end{proposition}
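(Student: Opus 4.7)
My plan is to extract a length-maximizing timelike geodesic from $S$ to $x$ via the time-dual of Lemma \ref{lem_basic_gh_curve_existence_result}, propagate the maximizing property to every sub-segment by a concatenation argument, and then identify $-\nabla\tau_S$ with $\gamma'$ by a first-order gradient bound together with an extremality argument. For existence, I would choose a sequence $x_i\in\future{x}$ with $x_{i+1}\in\past{x_i}$ and $x_i\to x$, together with $h$-arc length parametrised, past-directed causal curves $\gamma_i$ from $x_i$ to $S$ with $L(\gamma_i)\ge\tau_S(x_i)-1/i$. Lemma \ref{lem_basic_gh_curve_existence_result} then produces a past-directed smooth timelike geodesic realising $\tau_S(x)=L(\gamma)=d_L(\gamma(a),\gamma(0))$; reversing the parameter and re-parametrising by $g$-arc length gives the desired future-directed, $g$-arc length smooth timelike geodesic $\gamma:[0,a]\to M$ with $a=\tau_S(x)$, $\gamma(0)\in S$, and $\gamma(a)=x$.

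Property (1) rests on the concatenation inequality $\tau_S(q)\ge\tau_S(p)+d_L(p,q)$ for $p\in\jpast{q}\cap\future{S}$, obtained by taking the supremum over $S$ of the reverse triangle inequality for $d_L$. Applied at $p=\gamma(v)$, $q=x$, and combined with $L(\gamma|_{[0,v]})\le\tau_S(\gamma(v))$ and $L(\gamma|_{[v,a]})\le d_L(\gamma(v),x)$, this yields the telescoping chain
\[
a=\tau_S(x)\ge\tau_S(\gamma(v))+d_L(\gamma(v),x)\ge L(\gamma|_{[0,v]})+L(\gamma|_{[v,a]})=a,
\]
which forces equality throughout. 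Repeating with $\gamma(u)$ in place of $S$ gives $\tau_S(\gamma(u))=u$ and $L(\gamma|_{[u,v]})=d_L(\gamma(u),\gamma(v))=v-u=\tau_S(\gamma(v))-\tau_S(\gamma(u))$ for all $0\le u<v\le a$.

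For property (2), the identity $\tau_S\circ\gamma(u)=u$ differentiates at any point $p=\gamma(u)$ of differentiability of $\tau_S$ to give $g(\nabla\tau_S|_p,\gamma'(u))=1$. The further key ingredient is the first-order bound $d\tau_S|_p(v)\ge\sqrt{-g(v,v)}$ for every future-causal $v\in T_pM$, obtained by comparing $\tau_S$ along the radial geodesic $t\mapsto\exp_p(tv)$ in a convex normal neighbourhood (where $d_L(p,\exp_p(tv))=t\sqrt{-g(v,v)}$ for timelike $v$ and small $t$) with the concatenation inequality of the previous paragraph. Decomposing $\nabla\tau_S|_p=\alpha\gamma'(u)+w$ with $w$ $g$-orthogonal to $\gamma'(u)$ (so $w$ is spacelike or zero), the identity forces $\alpha=-1$, and applying the first-order bound to the unit future-timelike family $\hat v_\epsilon=(\gamma'(u)+\epsilon w)/\sqrt{1-\epsilon^2 g(w,w)}$ gives
\[
\frac{1+\epsilon g(w,w)}{\sqrt{1-\epsilon^2 g(w,w)}}=g(\nabla\tau_S|_p,\hat v_\epsilon)\ge 1
\]
for all small $\epsilon$ of either sign. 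The minimum at $\epsilon=0$ then forces $g(w,w)=0$, hence $w=0$, so $\nabla\tau_S|_p=-\gamma'(u)$ and $g(\nabla\tau_S|_p,\nabla\tau_S|_p)=-1$.

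The main obstacle will be establishing the first-order bound $d\tau_S(v)\ge\sqrt{-g(v,v)}$: one has to translate a Lorentzian-distance concatenation inequality that only controls future-causal directions into a pointwise gradient bound at points where $\tau_S$ is only known to be differentiable, not $C^1$. Once this bound is in hand, the extremality computation in the third paragraph pins down both the norm and the direction of $\nabla\tau_S|_p$ in a single step.
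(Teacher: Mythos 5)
Your construction of $\gamma$ and your proof of property (1) are essentially the paper's own argument: the same approximating sequence fed into Lemma \ref{lem_basic_gh_curve_existence_result}, followed by the same telescoping chain built from $\tau_S(x)\geq \tau_S(\gamma(v))+d_L(\gamma(v),x)$ that forces $\tau_S(\gamma(u))=L(\gamma|_{[0,u]})=u$ and then the maximality of every sub-segment. For property (2) you diverge slightly, and in a way that makes the proof more self-contained. The paper also starts from $g(\nabla\tau_S,\gamma')=1$, but then invokes the reverse Cauchy--Schwarz inequality together with the externally cited bound $g(\nabla\tau_S,\nabla\tau_S)\leq -1$ (the ``direct calculation'' of \cite[Proposition 2.16]{rennie2016generalised}); equality in reverse Cauchy--Schwarz then forces $\nabla\tau_S=-\gamma'$. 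You instead prove the equivalent first-order bound $d\tau_S|_p(v)\geq\sqrt{-g(v,v)}$ from scratch via the radial-geodesic comparison in a convex normal neighbourhood, and replace the equality case of reverse Cauchy--Schwarz by an explicit orthogonal decomposition $\nabla\tau_S=-\gamma'+w$ and the perturbation family $\hat v_\epsilon$; your computation $F(\epsilon)=(1+\epsilon\,g(w,w))/\sqrt{1-\epsilon^2 g(w,w)}\geq 1$ with $F'(0)=g(w,w)$ correctly kills $w$. The radial comparison is legitimate at points of mere differentiability (the chain rule along $t\mapsto\exp_p(tv)$ needs only differentiability of $\tau_S$ at $p$), so the ``main obstacle'' you flag is surmountable exactly as you sketch. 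What your route buys is independence from the cited gradient estimate; what the paper's route buys is brevity, since reverse Cauchy--Schwarz packages your extremality argument in one line.
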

  \begin{proof}
    Choose a sequence $(x_i)_i\subset\future{x}$
    so that $x_{i+1}\subset\past{x_i}$.
    For 
    each $i\in\N$ choose
    $\gamma_i:[0,b_i]\to M$, $b_i\in\R^+$, be a past directed causal
    curve from $x_i$ to $S$ so that
    $L(\gamma_i)\geq \tau_S(x_i) - 1/i$.
    Let $\gamma:[0,\tilde{a}]\to M$ be the curve constructed by
    Lemma \ref{lem_basic_gh_curve_existence_result}
    using the sequence of curves $(\gamma_i)_i$.
    We re-parametrise $\gamma$ to be arc-length
    parametrised with respect to $g$.
    In an abuse of notation we denote this re-parametrised
    curve by $\gamma:[0,a]\to M$, $a\in\R^+$.
    Thus, by definition $g(\gamma',\gamma')=-1$.

    Remark 4.11 of \cite{beem1996global}
    shows that 
    for all $u,v\in[0,a]$, $u<v$, 
    $L(\gamma|_{[u,v]})=d_L(\gamma(u),\gamma(v))$.
    Suppose that for some $u\in[0,a]$ we
    have $\tau_S(\gamma(u))>L(\gamma|_{[0,u]})$.
    Then
    \[
      L(\gamma)=\tau_S(x)\geq d_L(\gamma(u), x) + \tau_S(\gamma(u))
        > L(\gamma|_{[u,a]}) + L(\gamma|_{[0,u]})
        =L(\gamma),
    \]
    which is a contradiction.
    We find that
    for all $u\in[0,a]$ we have $\tau_S(\gamma(u))=L(\gamma|_{[0,u]})$.
    Hence, for all $u, v\in[0,a]$, $u<v$
    \[
      L(\gamma|_{[0,v]})=\tau_S(\gamma(v))\geq 
        d_L(\gamma(u), \gamma(v)) + \tau_S(\gamma(u))
        = L(\gamma|_{[u,v]}) + L(\gamma|_{[0,u]})
        =L(\gamma|_{[0,v]}),
    \]
    and therefore
    \[
      L(\gamma|_{[u,v]})=
          d_L(\gamma(u),\gamma(v))=\tau_S(\gamma(v))-\tau_S(\gamma(u)).
    \]

    Since $\gamma$ is arc length parametrised,
    for all $u\in[0,a]$ we
    have $\tau_S(\gamma(u))-\tau_s(\gamma(0))=L(\gamma|_{[0,u]})=u$.
    Thus, by the reverse Cauchy inequality,
    \cite[Proposition 5.30]{oneil1983semi},
    and the direct calculation in \cite[Propostion 2.16]{rennie2016generalised},
    wherever $\nabla\tau_S$ exists we have
    \[
      1=g(\nabla\tau_S,\gamma')\geq
        \sqrt{-g(\nabla\tau_S,\nabla\tau_S)}
        \sqrt{-g(\gamma',\gamma')}\geq 1
    \]
    and so $\nabla\tau_s=-\gamma'$
    and $g(\nabla\tau_S,\nabla\tau_S)=-1$.
  \end{proof}
    
  \begin{lemma}
    \label{lem_tau_gh_is_acausal}
    If $(M,g)$ is globally hyperbolic and $S\subset M$ is a
    Cauchy surface,
    then for all $t\in\range{\tau_S}$
    the set
    $\tau_S^{-1}{(t)}$ is acausal.
  \end{lemma}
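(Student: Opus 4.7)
The plan is to split on the sign of $t$. If $t=0$ then $\tau_S^{-1}(0)=S$, since $\tau_S$ is strictly positive on $\future{S}$ and strictly negative on $\past{S}$, and this is acausal by hypothesis. The case $t<0$ reduces to the case $t>0$ by replacing $(M,g)$ with its time reversal: $S$ remains a Cauchy surface, $\past{S}$ becomes the chronological future of $S$, and $\tau_S$ is replaced by $-\tau_S$, so $\tau_S^{-1}(t)$ with $t<0$ corresponds to the level set $(-\tau_S)^{-1}(-t)$ with $-t>0$. So fix $t>0$ and suppose toward a contradiction that there exist distinct $x,y \in \tau_S^{-1}(t)$ with $x \in \jpast{y}$.

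First I would rule out $x \in \past{y}$: surface functions are strictly increasing along timelike curves (recalled in Section~\ref{sec_background}), so a timelike curve from $x$ to $y$ would force $\tau_S(x)<\tau_S(y)$, contradicting $\tau_S(x)=\tau_S(y)=t$. Hence $y \in \jfuture{x}\setminus\future{x}$, and there is a future directed causal curve $\lambda$ from $x$ to $y$ that is nowhere timelike, so $L(\lambda)=0$. Next, by Proposition~\ref{prop_tau_prop} applied at $x\in\future{S}$, there is a future directed timelike geodesic $\gamma_x:[0,a]\to M$ from some $s\in S$ to $x$ with $L(\gamma_x)=\tau_S(x)=t$. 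Concatenating $\gamma_x$ with $\lambda$ yields a continuous future directed causal curve $\alpha$ from $s$ to $y$ which contains a genuine timelike segment and therefore is not a null pregeodesic. The standard push-up lemma (e.g.\ \cite[Proposition 10.46]{oneil1983semi}) then furnishes $d_L(s,y)>L(\alpha)=L(\gamma_x)+L(\lambda)=t$, so
\[
\tau_S(y)=d_L(S,y)\geq d_L(s,y)>t,
\]
contradicting $y\in\tau_S^{-1}(t)$.

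The main obstacle is the push-up step. Although classical, its precise statement varies by source, and one must check that the hypothesis of being non-null-pregeodesic is met by the broken curve $\alpha$. If one prefers to avoid citing it outright, the same conclusion can be obtained by hand: choose a convex normal neighbourhood $U$ of $x$, pick $p\in\gamma_x\cap U$ just before $x$ and $q\in\lambda\cap U$ just after $x$, and observe that the unique $U$-geodesic from $p$ to $q$ is timelike (since the broken causal segment joining them inside $U$ contains a nondegenerate timelike piece, hence $q\in\future{p}$) and has length strictly greater than the sum of the two broken pieces it replaces; splicing this geodesic into $\alpha$ produces a timelike curve from $s$ to $y$ of length strictly exceeding $t$, giving the same contradiction.
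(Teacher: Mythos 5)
Your argument is correct and follows essentially the same route as the paper: both reduce to two points $x,y$ on a positive level set joined by a null curve, invoke Proposition \ref{prop_tau_prop} to produce the maximizing timelike geodesic from $S$ to $x$, and derive a contradiction from the concatenated broken causal curve (with $t\le 0$ handled by acausality of $S$ and time duality). The only difference is the last step: the paper observes that the concatenation attains the Lorentzian distance and so must reparametrise to a smooth timelike geodesic, which is impossible given the causal-character break at $x$, whereas you push up to a strictly longer timelike curve to contradict $\tau_S(y)=t$ --- and your convex-normal-neighbourhood version correctly supplies the strict length inequality that a bare citation of the chronological push-up lemma would not.
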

  \begin{proof}
    Suppose that $t>0$.
    Let $x,y\in\tau_S^{-1}(t)$ be such that $y\in\jfuture{x}\setminus\{x\}$. 
    As $\tau_S$ is increasing on
    timelike curves we know that $y\not\in\future{x}$.
    Therefore there exists $\gamma$ a null curve from $x$ to $y$
    on which $\tau_S$ is constant.

    Proposition \ref{prop_tau_prop}
    implies that there exists $\lambda\in\Omega_{S,x}$
    a timelike geodesic so that
    $L(\lambda)=\tau_S(x)$. 

    The concatenation of $\gamma$ and $\lambda$, denoted $\sigma$, 
    is a causal curve
    so that $L(\sigma)=\tau_S(y)$.
    Theorem 4.13 of
    \cite{beem1996global}
    implies that $\sigma$ can be reparametrised
    as a smooth timelike geodesic. 
    This is a contradiction as, at $x$,
    $g(\gamma',\gamma')=0$ and $g(\lambda',\lambda')<0$.

    The result now follows by time duality
    and as our Cauchy surfaces are necessarily acausal
    \cite[Page 65]{beem1996global}.
  \end{proof}

    \begin{lemma}
      \label{lem_gh_sur_gtf}
      Surface functions of Cauchy surfaces in globally hyperbolic
      manifolds are generalised time functions,
      i.e.\ increasing on all future directed causal curves.
    \end{lemma}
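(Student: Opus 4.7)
The plan is to establish two things in turn: that $\tau_S$ is non-decreasing on every future directed causal curve, and then to promote this to strict monotonicity using the acausality of level sets from Lemma \ref{lem_tau_gh_is_acausal}. Fix a future directed causal curve $\gamma:[0,1]\to M$, set $x=\gamma(0)$, $y=\gamma(1)$, and assume $x\neq y$. By Corollary \ref{lem_surface_function_exists}, $M$ is the disjoint union $\future{S}\cup S\cup\past{S}$, and since $S$ is acausal and $\gamma$ is future directed, $\gamma$ meets $S$ at most once; in particular the configurations $x,y\in S$ with $x\neq y$, $x\in\future{S}$ with $y\in S\cup\past{S}$, and $x\in S$ with $y\in\past{S}$ are incompatible with $y\in\jfuture{x}$.

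For the non-decrease step, the remaining configurations split naturally in two. When $x,y\in\future{S}$, the reverse triangle inequality for $d_L$ applied to any $s\in S$ with $s\leq x\leq y$ gives $d_L(s,y)\geq d_L(s,x)+d_L(x,y)$; taking the supremum over such $s$ (and noting that $d_L(S,y)$ dominates the sup of $d_L(s,y)$ over the smaller set $\{s\in S:s\leq x\}$) yields $d_L(S,y)\geq d_L(S,x)+d_L(x,y)$, hence $\tau_S(y)\geq \tau_S(x)+d_L(x,y)\geq \tau_S(x)$. When $x,y\in\past{S}$, the time-dual reverse triangle inequality $d_L(x,s)\geq d_L(x,y)+d_L(y,s)$ for $s\in S$ with $y\leq s$, after taking the supremum in $s$, gives $d_L(x,S)\geq d_L(x,y)+d_L(y,S)$, so once again $\tau_S(y)\geq \tau_S(x)+d_L(x,y)\geq \tau_S(x)$. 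The remaining ``transition'' cases, where $x\in\past{S}\cup S$ and $y\in\future{S}\cup S$ (not both on $S$), are immediate from the sign of $\tau_S$: we have $\tau_S(x)\leq 0\leq \tau_S(y)$, and strictness of at least one of these inequalities (because the point off $S$ is either chronologically to the future of some point of $S$ or chronologically to the past of some point of $S$, producing a positive Lorentzian distance) already gives $\tau_S(y)>\tau_S(x)$.

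For the strict upgrade in the two remaining cases $x,y\in\future{S}$ and $x,y\in\past{S}$, suppose $\tau_S(x)=\tau_S(y)=t$. Then $t\in\range{\tau_S}$ and $x,y\in\tau_S^{-1}(t)$, which by Lemma \ref{lem_tau_gh_is_acausal} is acausal; since $y\in\jfuture{x}$, this forces $y=x$, contrary to assumption. Hence $\tau_S(y)>\tau_S(x)$ in these cases as well, and the lemma follows.

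The main nuisance is bookkeeping of the suprema defining $d_L(S,\cdot)$ and $d_L(\cdot,S)$: the reverse triangle inequality requires $s$ to be in a specific causal position relative to both $x$ and $y$, so one must take the sup first over the smaller, restricted set of $s$ and only then compare with the defining sup over all of $S$. The acausal-level-set argument is then essentially automatic, and, importantly, it is what handles null segments of $\gamma$ on which $d_L(x,y)=0$ and the reverse triangle inequality by itself only supplies non-strict monotonicity.
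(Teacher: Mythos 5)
Your proof is correct and hinges on the same key step as the paper's: upgrading non-decrease to strict increase via the acausality of the level sets $\tau_S^{-1}(t)$ from Lemma \ref{lem_tau_gh_is_acausal}. The only difference is that you derive the non-decreasing property from scratch using the reverse triangle inequality for $d_L$ and a case analysis on the decomposition $M=\future{S}\cup S\cup\past{S}$, whereas the paper simply cites this monotonicity as a known property of surface functions.
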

    \begin{proof}
      Let $\tau_S$ be the surface function of the Cauchy surface $S$. 
      Let $\gamma$ be a future directed causal curve.
      We know that $\tau_S$ is non-decreasing on $\gamma$.
      If $\tau_S$ is constant over some subcurve of $\gamma$
      then there exists a level surface of $\tau_S$
      that contains a causal curve.
      This contradicts
      Lemma \ref{lem_tau_gh_is_acausal} and the assumption that
      $S$ is acausal. Thus $\tau_S$ is a non-constant non-decreasing
      function on $\gamma$ and therefore is increasing.
    \end{proof}

    \begin{lemma}
      \label{lem_tauS_c1_near_S}
      If $(M,g)$ is globally hyperbolic and $S\subset M$ is
      a $C^1$ Cauchy surface,
      then there exists an open 
      neighbourhood $U\subset M$ of $S$ 
      and a continuous vector field $X:U\to TM$ so that
      wherever $\grad\tau_S$ exists $X=\grad\tau_S$.
    \end{lemma}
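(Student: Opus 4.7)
The vector field $X$ will be the (negative) tangent field of the family of $g$-unit timelike normal geodesics to $S$, which near $S$ realise $\tau_S$. Since $S$ is a $C^1$ Cauchy surface in the time-oriented globally hyperbolic $(M,g)$, each $T_pS$ is a spacelike hyperplane in $T_pM$ whose $g$-orthogonal complement is a timelike line; let $n(p)$ be its unique future-directed $g$-unit vector. By continuity of $TS$ (a $C^0$ subbundle of $TM|_S$) and smoothness of $g$, the map $n:S\to TM$ is continuous, and therefore so is the normal exponential map $E(p,t)=\exp_p(tn(p))$ together with $\partial_tE$, on its open domain of definition $\Omega\subset S\times\R$.

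The core of the argument is to produce an open $V\subset\Omega$ with $S\times\{0\}\subset V$ on which $E$ is a homeomorphism onto an open neighbourhood $U$ of $S$ in $M$. This is the main obstacle, because $n$ is only $C^0$ so $E$ is only $C^0$ and the inverse function theorem does not apply. I would proceed locally: about each $p\in S$ take a convex normal neighbourhood $W$ of $p$ in which $S\cap W$ is a $C^1$ graph in Riemann normal coordinates. For $y\in\future{S}\cap W$ close enough to $p$, convex normality makes the geodesic in $W$ realising $d_L^W(S\cap W,y)$ unique, and a first variation argument (valid at $C^1$ regularity) forces it to meet $S$ $g$-orthogonally at its basepoint $\pi(y)$, so its initial velocity is $n(\pi(y))$. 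Lemma \ref{lem_basic_gh_curve_existence_result}, combined with the level-set achronality of Lemma \ref{lem_tau_gh_is_acausal} and the continuity of $\tau_S$ from Lemma \ref{lem_tau_gh_is_cts}, ensures that for $y$ sufficiently close to $S$ this local maximiser realises the global $\tau_S(y)$, so $(\pi(y),\tau_S(y))$ is the unique preimage of $y$ under $E$. Continuity of $\pi$ and $\tau_S$ gives a continuous local inverse; invariance of domain then promotes this to the homeomorphism $E|_V\colon V\to U$ after shrinking and patching. The past case is dual, allowing $V$ to be chosen as a two-sided neighbourhood of $S\times\{0\}$.

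Finally, define $X:U\to TM$ by $X(E(p,t))=-\partial_tE(p,t)$, and set $X(p)=-n(p)$ for $p\in S$; this is well-defined by injectivity of $E|_V$ and continuous by continuity of $E^{-1}$ and $\partial_tE$. To check $X=\nabla\tau_S$ wherever the gradient exists, take $x=E(p,t)\in U\cap\future{S}$ with $\nabla\tau_S|_x$ defined. Proposition \ref{prop_tau_prop} supplies a unit-speed maximising timelike geodesic $\gamma:[0,a]\to M$ from $S$ to $x$ with $\gamma'(a)=-\nabla\tau_S|_x$, and the orthogonality step of Phase 2 applied to $\gamma$ forces $\gamma'(0)=n(\gamma(0))$, so $\gamma(s)=E(\gamma(0),s)$; injectivity of $E$ on $V$ gives $(\gamma(0),a)=(p,t)$, whence $\nabla\tau_S|_x=-\partial_tE(p,t)=X(x)$. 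The past case is dual, completing the identification.
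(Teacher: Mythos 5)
Your overall architecture is the same as the paper's: both proofs build the normal exponential map $E(p,t)=\exp_p(t\,n(p))$ out of the continuous unit normal $n$ of the $C^1$ surface, take $U=E(V)$ for a suitable neighbourhood $V$ of the zero section, and set $X=-\partial_tE$ transported through $(E|_V)^{-1}$. Where you genuinely differ is in the identification $X=\grad\tau_S$: the paper argues ``forwards'', asserting that each normal geodesic $\gamma_s(t)=\exp_s(tn(s))$ is focal-point free, hence maximising, so that $\tau_S(\gamma_s(t))=t$ and $\grad\tau_S=-\gamma_s'$ wherever the gradient exists; you argue ``backwards'' from Proposition \ref{prop_tau_prop}, showing that the maximising geodesic ending at $x$ meets $S$ orthogonally and therefore is one of the normal geodesics. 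Your route has the merit of not invoking focal points (a notion that is not even defined for a hypersurface that is only $C^1$), and you correctly isolate the real difficulty, namely the injectivity of $E$ near the zero section, which the paper disposes of in a single sentence by positing a ``normal neighbourhood of $S$''.

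However, your proposed repair of that step does not go through at $C^1$ regularity. Convex normality gives uniqueness of the geodesic joining two \emph{given} points, not uniqueness of the maximising foot point on $S\cap W$, and the latter can fail on \emph{every} neighbourhood of $S$: a $C^1$ hypersurface need not have positive reach. Concretely, in two-dimensional Minkowski space let $S$ contain the graph $t=-\abs{x}^{3/2}$ near the origin (a spacelike, acausal, $C^1$ curve). For each small $\epsilon>0$ the point with coordinates $(t,x)=(\epsilon,0)$ admits two distinct maximisers, with foot points at parameters $x\approx\pm\tfrac{9}{4}\epsilon^{2}$, and the normal geodesics emanating from these two foot points cross on the $t$-axis at height of order $\epsilon$; since this happens for all small $\epsilon$, the map $E$ is injective on no neighbourhood of the zero section, so the claimed homeomorphism $E|_V$ does not exist and both the ``unique preimage'' assertion and the invariance-of-domain step collapse. (Worse, the one-sided limits of the maximisers' velocities across the axis differ by an amount of order $\epsilon$ at height $\epsilon$, which suggests that the statement itself requires $S$ to be better than $C^1$, e.g.\ $C^{1,1}$, for a continuous $X$ to exist at all.) To be fair, the paper's own proof silently assumes exactly the same things --- both the tubular-neighbourhood property and the focal-point-free maximisation require a bound on the second fundamental form that $C^1$ does not provide --- so your extra care has exposed, rather than created, the difficulty; but as written your argument does not close the gap.
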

    \begin{proof}
      As $S$ is $C^1$ we know that the unit normal vector field $n$ to $S$
      is continuous and timelike everywhere. Let $NS$ denote the normal bundle
      to $S$,
      that is $N_sS$ is the span of $n(s)\in T_sM$ for each $s\in S$.
      Let $V\subset NS$ be an open neighbourhood of
      the zero section of $NS$ over $S$
      so that $\exp(V)$ is a normal neighbourhood of $S$.

      Let $s\in S$ and 
      let $I_s=\{t\in\R: tn(s)\in V\}$.
      Define $\gamma_s(t):I_s\to M$ by
      $\gamma_s=\exp_s(tn(s))$.
      By definition $\gamma_s$ is focal point free
      and therefore 
      for all $t\in I_s\cap\R^+$, 
      $t=L(\gamma_s|_{[0,t]})=d_L(S, \gamma_s(t))=\tau_S(\gamma_s(t))$,
      see \cite[Propositions 12.25, 12.29]{beem1996global}.
      Similarly,
      for all $t\in I_s\cap\R^-$, 
      $-t=L(\gamma_s|_{[t,0]})=d_L(\gamma_s(t), S)= -\tau_S(\gamma_s(t))$.
      Thus $\tau_S(\gamma_s(t))=t$ for all $t\in I_s$.
      Therefore, if $\grad\tau_S|_{\gamma_s(t)}$ exists
      then $\grad\tau_S|_{\gamma_s(t)}=-\gamma'_s(t)$.
      Let $\partial_t\in TI_s$ be the standard unit length tangent vector.
      Thus $\gamma_s'(t) = (d\exp_s(\partial_t))|_{tn(s)}$.
      Let $X(\exp(tn(s)))= -\gamma_s'(t)=-(d\exp_s(\partial_t))|_{tn(s)}$.
      As $n$ is continuous and $d\exp$ is smooth
      $X$ is a continuous vector field so that
      if $\grad\tau_S$ exists at $x$ then 
      $X(x)=\grad\tau_S$. 
    \end{proof}

    \begin{lemma}
      \label{lem_tau_S_bounded_h_grad}
      Let $(M,g)$ be globally hyperbolic, $\tau_S$ the surface function
      associated to a $C^1$ Cauchy surface $S\subset M$ and $h$ an auxiliary
      Riemannian metric.
      For all $C\subset M$, a compact subset, there exists $K\in\R^+$
      so that for all $c\in C$ where $\grad\tau_S|_c=g(d\tau_S|_c, \cdot)$
      exists we have $h(\grad\tau_S|_c, \grad\tau_S|_c)< K$.
    \end{lemma}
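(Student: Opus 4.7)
The plan is to bound $h(\nabla\tau_S,\nabla\tau_S)$ separately on a neighbourhood of $S$ and on the complementary region of $C$, which is compact and on which $\tau_S$ is bounded away from zero. By time duality it suffices to treat $C\cap\overline{\future{S}}$. For the near-$S$ piece, Lemma \ref{lem_tauS_c1_near_S} supplies an open neighbourhood $U$ of $S$ and a continuous vector field $X$ on $U$ with $X=\nabla\tau_S$ wherever the latter exists. Choosing an open $V$ with $S\subset V\subset\overline{V}\subset U$, the continuous function $h(X,X)$ is bounded on the compact set $C\cap\overline{V}$, which transfers the bound to $\nabla\tau_S$ there. On the complementary compact set $C\setminus V$, continuity of $\tau_S$ (Lemma \ref{lem_tau_gh_is_cts}) provides $\delta>0$ with $\tau_S\geq\delta$ on $(C\setminus V)\cap\future{S}$.

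Suppose for contradiction that $(c_i)\subset (C\setminus V)\cap\future{S}$ has $w_i:=\nabla\tau_S|_{c_i}$ defined and $h(w_i,w_i)\to\infty$. By Proposition \ref{prop_tau_prop}, each $w_i$ is past-pointing with $g(w_i,w_i)=-1$, and each $c_i$ is the endpoint of a past-directed $g$-unit maximising timelike geodesic $\tilde{\gamma}_i:[0,a_i]\to M$ with $\tilde{\gamma}_i(0)=c_i$, $\tilde{\gamma}_i'(0)=w_i$, $\tilde{\gamma}_i(a_i)\in S$, and $a_i=\tau_S(c_i)$. Passing to a subsequence, $c_i\to c$ with $\tau_S(c)\geq\delta$, so $a_i\to\tau_S(c)>0$. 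The normalisations $u_i:=w_i/\sqrt{h(w_i,w_i)}$ are $h$-unit with $g(u_i,u_i)=-1/h(w_i,w_i)\to 0$; compactness of the $h$-sphere bundle over a compact base yields $u_i\to u\in T_cM$ along a further subsequence, where $u$ is $h$-unit and $g$-null at $c$. The reparametrised geodesics
\[
  \beta_i(t):=\exp_{c_i}(tu_i)=\tilde{\gamma}_i\!\left(t/\sqrt{h(w_i,w_i)}\right)
\]
then satisfy $\beta_i(t)\in\future{S}$ for all $t\in[0,a_i\sqrt{h(w_i,w_i)})$, and $a_i\sqrt{h(w_i,w_i)}\to\infty$.

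Fix a Cauchy surface $S'\subset\past{S}$, so that $K:=\overline{\jpast{\{c\}}\cap\jfuture{S'}}$ is compact. After extending each $\beta_i$ past-inextendibly by a causal curve reaching $S'$ and reparametrising by $h$-arc-length, Theorem \ref{thm_CurveUniformConvergenceInBoundedRegion_extension}, applied inside $K$ with the distance induced by $h$, yields a past-directed causal limit curve $\hat\beta$ starting at $c$ whose domain extends past the first crossing of $S$. Since $c\in\future{S}$ and $S$ is Cauchy, $\hat\beta$ crosses $S$ at some $t_0>0$; acausality of $S$ forces $\hat\beta(t_0+\epsilon)\in\past{S}$ for small $\epsilon>0$. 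On the other hand, for sufficiently large $i$ the corresponding point on $\beta_i$ still lies in $\future{S}$ (thanks to $a_i\sqrt{h(w_i,w_i)}\to\infty$), so uniform convergence places $\hat\beta(t_0+\epsilon)$ in both $\overline{\future{S}}$ and $\past{S}$, which are disjoint since $S$ is an acausal Cauchy surface. This contradiction forces the desired bound.

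The principal obstacle is the persistence of $\hat\beta$ on an interval strictly past $t_0$. Continuous dependence of $\exp$ on initial data only gives $\hat\beta(t)=\exp_c(tu)$ on the domain of existence of this null geodesic, which need not reach $t_0$ without null geodesic completeness. Replacing this direct approach by the limit curve theorem applied to the past-inextendible extensions of $\beta_i$ (confined to the compact set $K$) automatically supplies a domain reaching $S'\subset\past{S}$, and in particular an interval past $t_0$, which is what the $\future{S}/\past{S}$ sign argument requires.
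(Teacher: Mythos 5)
Your decomposition of $C$ (a piece near $S$ handled by the continuous vector field of Lemma \ref{lem_tauS_c1_near_S}, the rest reduced to $\future{S}$ by time duality and attacked by contradiction via the maximising geodesics of Proposition \ref{prop_tau_prop}) is exactly the paper's, and your setup through the convergence $u_i\to u$ with $u$ $g$-null is sound. The gap is in the final contradiction, at the claim that ``for sufficiently large $i$ the corresponding point on $\beta_i$ still lies in $\future{S}$ (thanks to $a_i\sqrt{h(w_i,w_i)}\to\infty$)''. The quantity $a_i\sqrt{h(w_i,w_i)}$ is the range of the \emph{affine} parameter of $\exp_{c_i}(\,\cdot\,u_i)$ before reaching $S$, but you invoke Theorem \ref{thm_CurveUniformConvergenceInBoundedRegion_extension} only after reparametrising by $h$-arc-length, and in that parametrisation the segment of $\beta_i$ lying in $\future{S}$ has parameter length equal to the $h$-length of the maximising geodesic $\tilde{\gamma}_i$ from $c_i$ to $S$. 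These geodesics all lie in the compact set $\jpast{C}\cap\jfuture{S}$, and causal curves confined to a compact subset of a globally hyperbolic manifold have uniformly bounded $h$-arc-length --- the same fact the paper uses in Lemma \ref{lem_basic_gh_curve_existence_result} to bound the $a_i$ there. Hence the crossing parameters converge to a finite $t_0$, and for large $i$ the point of $\beta_i$ at parameter $t_0+\epsilon$ lies on the appended extension, in $\jpast{S}$ --- the opposite of what you assert. Uniform convergence then places $\hat\beta(t_0+\epsilon)$ in $\overline{\past{S}}$, which is entirely consistent with $\hat\beta$ being past-directed causal, and no contradiction results. The divergence of the affine range cannot be transferred to the arc-length picture precisely because the two parametrisations degenerate relative to one another as $h(w_i,w_i)\to\infty$; and in the affine picture the limit curve theorem is unavailable and, as you note yourself, $\exp_c(\,\cdot\,u)$ need not persist long enough.

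The paper derives the contradiction from the causal character of the limit rather than from its position relative to $S$, and you already hold the needed ingredient. Lemma \ref{lem_basic_gh_curve_existence_result} shows that the limit of the $\tilde{\gamma}_i$ is a maximising \emph{timelike} geodesic $\gamma$ from $c$ of positive length, while the world function satisfies $\Phi(\tilde{\gamma}_i(0),\tilde{\gamma}_i(t))=-t^2/h(w_i,w_i)\to 0$ for fixed small $t$; continuity of $\Phi$ would force $\Phi(\gamma(0),\gamma(t))=0$, contradicting $\Phi<0$ along a timelike geodesic. In other words, your observation that $u_i$ converges to a null vector is the right degeneration, but it must be played off against the timelikeness of the limit geodesic, not against the acausality of $S$.
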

       \begin{proof}
      Lemma \ref{lem_tauS_c1_near_S} implies that
      there exists a neighbourhood $U\subset M$ of $S$
      and a continuous vector field $X:U\to TM$ so that
      $X=\grad\tau_S$ wherever $\grad\tau_S|_U$ exists.
      If we suppose that $C\subset U$, then since $h(X,X)$ is continuous and
      $C$ is compact the
      result holds. 
    
      More generally, we start by considering $C\cap U$. Let
      $\phi_+,\phi_-,\phi_0$ be a partition of unity whose supports are in
      $\future{S}$, $\past{S}$ and $U$ respectively. Let $C_0=\{x\in C\cap
      U:\,\phi_0\geq 1/2\}$. Then $C_0$ is a closed subset of the compact set
      $C$, and so is compact, and $C_0\subset U$. Now define $C_+$ to be the
      closure of $(C\cap \future{S})\setminus C_0$, and 
      similarly set $C_-$ to be the closure of $(C\cap \past{S})\setminus C_0$.
      
      We have already shown that the result is true for $C_0$. Therefore, by
      time duality,
      if the result holds for any compact subset
      $C\subset\future{S}$ then the result will hold for
      any compact subset $C\subset M$.
    
      With this in mind suppose that
      there exists a compact set $C\subset\future{S}$ and
      $(x_i)\subset C$ a sequence of points in $C$
      so that $\grad\tau_S|_{x_i}$ exists and
      $\lim_{i\to\infty}h(\grad\tau_S|_{x_i}, \grad\tau_S|_{x_i})=\infty$.
      By taking a subsequence we can further assume that $x_i\to x\in C$.

      Since $M$ is globally hyperbolic $A=\jpast{C}\cap \jfuture{S}$
      is compact. Thus, by Proposition
      \ref{prop_tau_prop},
      for each $i\in\N$ there exists
      a smooth past directed timelike geodesic
      $\gamma_i:[0,a_i]\to A$ so that $\gamma_i(0)=x_i$,
      $\gamma_i(a_i)\in S$ and $L(\gamma_i)=\tau_S(x_i)$.
      Without loss of generality we can assume that each 
      $\gamma_i$ is $h$ arc length parametrised.
      That is, we can assume that
      $h(\gamma_i',\gamma_i')=1$.

      Let $\gamma:[0,a)\to A$ be the smooth timelike geodesic from
      $x$ given by applying 
      Lemma
      \ref{lem_basic_gh_curve_existence_result}
      to the sequence $(\gamma_i)$.
      Lemma
      \ref{lem_basic_gh_curve_existence_result}
      tells us that
      $\tau_S(x)=L(\gamma)=\lim_k \tau_S(x_{i_k})$. In particular, $\gamma$ is
      timelike.

      Due to our parametrisation,
      Proposition
      \ref{prop_tau_prop} implies that
      \[
        \gamma_i'(0)=\frac{1}{\sqrt{h(\grad\tau_S|_{x_i},\grad\tau_S|_{x_i})}}
          \grad\tau_S|_{x_i} .
      \]
      By taking a convex normal neighbourhood about $x$
      we see that there exists $\tau\in\R^+$ so that
      for $i$ large enough $\tau$ is in the domain
      of $\gamma_i$.
      Since $\gamma_i$ is a geodesic we know that
      $\gamma_i(t)=\exp_{\gamma_i(0)}(t\gamma_i'(0))$, at least for
      $t\in[0,\tau]$.
      The world function, \cite[Definition 2.13]{penrose1972techniques},
      on $U$, $\Phi:U\times U\to\R$ is defined by
      \[
        \Phi(p,q)=g(\exp_p^{-1}(q), \exp_p^{-1}(q)).
      \]
      We can compute that
      \[
        \Phi(\gamma_i(0),\gamma_i(t))=t^2g(\gamma_i'(0),\gamma_i'(0))
        =-\frac{t^2}{{h(\grad\tau_S|_{x_i},\grad\tau_S|_{x_i})}}.
      \]
      Since $\Phi$ is continuous, \cite[Definition 2.13]{penrose1972techniques}
      we see that 
      \begin{equation}
        \label{eq_locally_bounded_zero}
        \Phi(\gamma(0),\gamma(t))
        =
        \lim_{i\to\infty}
        \Phi(\gamma_i(0),\gamma_i(t))
        =
        \lim_{i\to\infty}
        t^2g(\gamma_i'(0),\gamma_i'(0))
        =
        \lim_{i\to\infty}
        -\frac{t^2}{{h(\grad\tau_S|_{x_i},\grad\tau_S|_{x_i})}}
        =0.
      \end{equation}
      As $\gamma$ is a timelike geodesic we know that
      $\Phi(\gamma(0),\gamma(t))<0$, \cite[Lemma 2.15]{penrose1972techniques}, so
      Equation \eqref{eq_locally_bounded_zero} gives us a contradiction.
      Hence the result holds for $C\subset\future{S}$ and
      thus for any compact subset of $M$.
    \end{proof}

    \begin{corollary}
      \label{cor_grad_lives_in_compact_Set}
      Let $(M,g)$ be globally hyperbolic, $\tau_S$ the surface function
      associated to a $C^1$ Cauchy surface $S\subset M$ and $h$ an auxiliary
      Riemannian metric.
      If $C\subset M$ is compact then
      \[
        \{\grad\tau_S|_c: c\in C,\  \grad\tau_S\text{ exists at } c\}
          \subset
        \{v\in T_cC: g(v,v)=-1,\ h(v,v)\leq K\},
      \]
      which is a compact subset of the tangent bundle $TC$.
    \end{corollary}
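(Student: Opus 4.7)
The proof assembles three pieces already established in the paper: the pointwise $g$-normalisation $g(\grad\tau_S,\grad\tau_S)=-1$, the uniform $h$-bound on $\grad\tau_S$ over compacta, and a routine compactness argument for bounded closed subsets of $TM$ lying above a compact base. The plan is to verify the set inclusion first and then observe that the containing set is manifestly compact.

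For the inclusion, fix $c\in C$ at which $\grad\tau_S$ exists. If $c\in\future{S}$, apply Proposition \ref{prop_tau_prop} to obtain a $g$-arc length parametrised timelike geodesic $\gamma:[0,a]\to M$ from $S$ to $c$ with $\gamma(a)=c$; item (2) of that proposition then gives $\grad\tau_S|_c=-\gamma'(a)$ and in particular $g(\grad\tau_S|_c,\grad\tau_S|_c)=-1$. For $c\in\past{S}$ the same conclusion follows by time duality after replacing $\tau_S$ with $-\tau_S$ and reversing the time orientation. For $c\in S$, invoke Lemma \ref{lem_tauS_c1_near_S} to produce a continuous vector field $X$ on a neighbourhood $U$ of $S$ that agrees with $\grad\tau_S$ wherever the latter exists; since $g(X,X)=-1$ holds on the dense subset of $U\setminus S$ where $\grad\tau_S$ exists (by the cases already handled) and both $X$ and $g$ are continuous, the identity $g(X,X)=-1$ extends to all of $U$, including the points of $S$ at which $\grad\tau_S$ is defined. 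Combined with Lemma \ref{lem_tau_S_bounded_h_grad}, which supplies $K\in\R^+$ with $h(\grad\tau_S|_c,\grad\tau_S|_c)\le K$ for every $c\in C$ where $\grad\tau_S$ exists, this yields the asserted set inclusion.

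For compactness, set
\[
A=\{(c,v)\in TM:c\in C,\ g_c(v,v)=-1,\ h_c(v,v)\le K\}.
\]
The set $\{(c,v)\in TM:c\in C,\ h_c(v,v)\le K\}$ is the closed $h$-ball bundle of radius $\sqrt{K}$ over the compact base $C$, which is compact because $h$ is a continuous Riemannian metric and each fibre is a closed Euclidean ball. Intersecting with the closed subset $\{(c,v)\in TM:g_c(v,v)=-1\}$, which is closed by continuity of $g$, preserves compactness. Hence $A$ is a compact subset of $TM$, and the inclusion established above places the set of gradients inside $A$.

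The only step requiring care is the treatment of points of $C\cap S$: Proposition \ref{prop_tau_prop} is phrased for points strictly to the future of $S$, so one cannot directly read off the $g$-identity there. This is exactly the obstacle that Lemma \ref{lem_tauS_c1_near_S} was designed to resolve, and the continuity extension sketched above is the only genuinely non-routine ingredient in the argument.
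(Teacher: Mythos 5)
Your proposal is correct and follows the same route as the paper, whose proof is a one-line citation of Lemma \ref{lem_tau_S_bounded_h_grad} and Proposition \ref{prop_tau_prop}. The extra care you take at points of $C\cap S$ (via Lemma \ref{lem_tauS_c1_near_S} and continuity) and in $\past{S}$ (via time duality) simply fills in details the paper treats as immediate, and your compactness argument for the ball bundle is the standard one.
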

    \begin{proof}
      This is an immediate consequence of Lemma 
      \ref{lem_tau_S_bounded_h_grad}
      and Proposition \ref{prop_tau_prop}.
    \end{proof}

    \begin{corollary}
      \label{cor_sf_alip}
      Let $(M,g)$ be globally hyperbolic. 
      If $\tau_S$ is the surface function
      associated to a $C^1$ Cauchy surface, then
      $\tau_S$ is locally anti-Lipschitz with respect to 
      any auxiliary
      Riemannian metric $h$.
    \end{corollary}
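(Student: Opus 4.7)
The plan is to mirror the structure of the proof of Proposition \ref{lem_time_function_parametrisation} but replace the reverse-Cauchy step by a compactness argument, since reverse Cauchy degenerates on null tangent vectors while the Riemannian length $\sqrt{h(v,v)}$ does not. Fix $p \in M$ and an open, relatively compact coordinate neighbourhood $U$ of $p$. The key input is Corollary \ref{cor_grad_lives_in_compact_Set}: wherever $\nabla\tau_S$ exists on $\overline U$, it is a $g$-unit past-pointing timelike vector of bounded $h$-length, so the closure $\overline G$ of its image is a compact subset of $T\overline U$ lying strictly inside the past timelike cone.

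Next I would manufacture the constant. Consider the compact bundle
\[
    \mathcal B = \{(x,v) \in T\overline U : g(v,v) \leq 0,\ v \text{ future-pointing},\ h(v,v) = 1\}.
\]
The map $(x,v,w) \mapsto g(w,v)$ on the fibre product $\mathcal B \times_{\overline U} \overline G$ is continuous and strictly positive, because $w$ is past timelike and $v$ is future causal. Compactness therefore produces $c > 0$ with $g(w,v) \geq c$ on this set, and homogeneity gives $g(w,v) \geq c \sqrt{h(v,v)}$ for every $w \in \overline G_x$ and every future causal $v \in T_xM$. Since the inequality is linear in $w$, it is preserved under convex combinations and $C^0$ limits, and via Theorem \ref{thm:clarke} it extends to every element of Clarke's generalised gradient: $\eta(v) \geq c\sqrt{h(v,v)}$ for all $\eta \in \partial^\circ \tau_S(x)$ and all future causal $v$.

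The remaining analysis follows the proof of Proposition \ref{lem_time_function_parametrisation}. Theorem \ref{thm_unreasonable_approx_result} produces, for each $\epsilon > 0$, a smooth $\tilde f_\epsilon$ with $|\tilde f_\epsilon - \tau_S| < \epsilon$ and $d\tilde f_\epsilon(x) \in \partial^\circ \tau_S((x+\epsilon B)\cap U) + \epsilon B$; uniform continuity of $g$ and $h$ on $\overline U$ together with the equivalence of the chart Euclidean norm and the $h$-norm yield $d\tilde f_\epsilon(v) \geq (c - \delta_\epsilon) \sqrt{h(v,v)}$ for every future causal $v$, where $\delta_\epsilon \to 0$. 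Reparametrise any future directed causal $\gamma:[0,1] \to U$ so that $\gamma'$ exists almost everywhere, integrate along $\gamma$, and send $\epsilon \to 0$ to obtain
\[
    \tau_S(\gamma(1)) - \tau_S(\gamma(0)) \geq c\, L(\gamma;h) = L(\gamma; c^2 h).
\]
Since $c^2 h$ is a Riemannian metric on $U$, $\tau_S$ is anti-Lipschitz on $U$, and as $p$ was arbitrary, $\tau_S$ is locally anti-Lipschitz.

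The main obstacle is the failure of the Lorentzian reverse-Cauchy bound on null vectors, which is what forces the detour through compactness. Making the compactness argument work requires $\overline G$ to stay strictly inside the past timelike cone; this is where Corollary \ref{cor_grad_lives_in_compact_Set} is essential, as it ensures that the $g$-gradient of $\tau_S$ remains both $g$-unit and of bounded $h$-length on compacta, ruling out both approach to the null cone and blow-up in the transverse direction. The subsequent smoothing and integration are routine adaptations of the proof of Proposition \ref{lem_time_function_parametrisation}.
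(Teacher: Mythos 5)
Your proof is correct, but it takes a genuinely different route from the paper. The paper's own proof is two lines: it combines Lemma \ref{lem_tau_S_bounded_h_grad} (the $h$-length of $\grad\tau_S$ is bounded on compacta) with Proposition \ref{prop_tau_prop} ($g(\grad\tau_S,\grad\tau_S)=-1$) to conclude that the gradient is ``bounded away from the light cones'' in the sense of Sormani and Vega, and then simply cites \cite[Definition 4.13 and Theorem 4.18]{sormani2016null} to obtain local anti-Lipschitzness. You use the same compactness input (via Corollary \ref{cor_grad_lives_in_compact_Set}) but then reprove the implication from scratch, adapting the Clarke-gradient and Czarnecki--Rifford smoothing machinery of Proposition \ref{lem_time_function_parametrisation}, with the reverse Cauchy--Schwarz step replaced by a uniform lower bound $g(w,v)\geq c\sqrt{h(v,v)}$ obtained by compactness of the fibre product of the unit causal sphere bundle with the closure of the gradient image. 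What your approach buys is self-containedness within the paper's own toolkit and an explicit local anti-Lipschitz constant; what it costs is length, and one ordering caveat: the Clarke gradient and Theorem \ref{thm_unreasonable_approx_result} both require $\tau_S$ to be locally Lipschitz, so your argument needs Lemma \ref{lem_sf_llip} as a prerequisite. That lemma appears \emph{after} Corollary \ref{cor_sf_alip} in the paper, but its proof does not use the corollary, so there is no circularity --- you would just need to reorder the presentation or cite it explicitly.
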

    \begin{proof}
      Let $p\in M$, $C\subset M$ be a compact neighbourhood
      of $p$ and let $U\subset C$ be an open neighbourhood of $p$.
      Lemma \ref{lem_tau_S_bounded_h_grad}
      implies that there exists $K\in\R^+$ so that
      wherever $\grad\tau_S|_C$ exists
      $h(\grad\tau_S,\grad\tau_S)< K$.
      We also know that $g(\grad\tau_S,\grad\tau_S)=-1$, by
      Proposition \ref{prop_tau_prop}.
      Therefore 
      \[
        \sqrt{\abs{g(\grad\tau_S,\grad\tau_S))}}
        = 1 \geq \frac{1}{\max\{1,K\}}\max\{1, \sqrt{h(\grad\tau_S,\grad\tau_S)}\},
      \]
      and so $\grad\tau_S$ is bounded away from light cones.
      The 
      local anti-Lipschitz property now follows from
      \cite[Definition 4.13 and Theorem 4.18]{sormani2016null}.
    \end{proof}

    \begin{lemma}
      \label{lem_sf_llip}
      Let $M$ be globally hyperbolic. 
      If $\tau_S$ is the surface function
      associated to a $C^1$ Cauchy surface, then
      $\tau_S$ is locally Lipschitz with respect to
      any auxiliary
      Riemannian metric $h$.
    \end{lemma}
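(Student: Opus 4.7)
The plan is to exploit the $h$-gradient bound from Lemma \ref{lem_tau_S_bounded_h_grad} combined with the Lorentzian structure of $\tau_S$ to establish a local Lipschitz estimate. I would fix $p \in M$ and a convex normal neighbourhood $U$ of $p$ with $\overline{U}$ compact, and aim to produce a constant $L$ such that $|\tau_S(y) - \tau_S(x)| \leq L \, d(x, y; h)$ for all $x, y \in U$.

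First I would dispose of the causally-related case. For $y \in J^+(x) \cap U$, the unique $g$-causal geodesic $\sigma:[0,1] \to U$ from $x$ to $y$ has $h$-length bounded by $C_1 \, d(x,y;h)$ by uniform comparability of $g$ and $h$ on the compact set $\overline{U}$. The bound $h(\nabla\tau_S, \nabla\tau_S) \leq K$ from Lemma \ref{lem_tau_S_bounded_h_grad}, together with a bilinear bound $|g(u,v)| \leq C_0 \|u\|_h \|v\|_h$ on $\overline{U}$, yields $|d\tau_S(v)| = |g(\nabla\tau_S, v)| \leq C_0 \sqrt{K}\, \|v\|_h$ at almost every point of $\overline{U}$. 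Integrating this bound along $\sigma$ would then give $|\tau_S(y) - \tau_S(x)| \leq C_0 C_1 \sqrt{K}\, d(x,y;h)$.

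Second I would reduce the spacelike-related case to the causally-related one. For $x, y \in U$ not causally related, I would construct an intermediate point $z \in I^+(x) \cap I^+(y) \cap U$ with $d(x,z;h) + d(y,z;h) \leq C_2\, d(x,y;h)$, for instance by perturbing the normal-coordinate midpoint of $x$ and $y$ in a future-timelike direction of norm proportional to $d(x,y;h)$. Applying the causally-related estimate to the pairs $(x,z)$ and $(y,z)$ and invoking the triangle inequality then yields the desired Lipschitz bound on $U$, completing the argument since $p$ was arbitrary.

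The principal obstacle is rigorously justifying the integration of the almost-everywhere gradient bound along the causal geodesic $\sigma$: pointwise differentiability almost everywhere in $M$ is not generally sufficient for absolute continuity of $\tau_S \circ \sigma$, since the preimage under $\sigma$ of the exceptional null set in $M$ may fail to be null. I would overcome this either by appealing to a characterisation of locally Lipschitz time functions dual to the local anti-Lipschitz characterisation from \cite[Theorem 4.18]{sormani2016null} used in Corollary \ref{cor_sf_alip}, or more directly by using the representation of $\tau_S$ as a Lorentzian length along the maximising timelike geodesics provided by Proposition \ref{prop_tau_prop}, together with the compactness of the set of unit $g$-causal tangent vectors over $\overline{U}$ from Corollary \ref{cor_grad_lives_in_compact_Set}, to produce the regularity needed to make $\tau_S \circ \sigma$ absolutely continuous and the integration step rigorous.
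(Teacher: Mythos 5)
Your overall architecture (reduce spacelike-separated pairs to causally related pairs via an intermediate point in $\future{x}\cap\future{y}$, then handle causal pairs using the $h$-bound on $\grad\tau_S$ from Lemma \ref{lem_tau_S_bounded_h_grad}) is reasonable, and the spacelike-to-causal reduction in a convex normal neighbourhood with compact closure does work with uniform constants. The genuine gap is exactly the one you flag at the end, and neither of your proposed remedies closes it. The estimate $\abs{\tau_S(y)-\tau_S(x)}\leq\int_\sigma\abs{d\tau_S(\sigma')}$ requires $\tau_S\circ\sigma$ to be absolutely continuous; monotonicity of $\tau_S$ along the future directed causal geodesic $\sigma$ (Lemma \ref{lem_gh_sur_gtf}) only gives $\tau_S(\sigma(1))-\tau_S(\sigma(0))\geq\int_0^1(\tau_S\circ\sigma)'\,\dd t$, which is the wrong direction for an upper bound --- this is precisely the Cantor-function obstruction. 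Worse, the gradient bound is an almost-everywhere statement on $M$, and the image of $\sigma$ has measure zero in $M$, so a priori the bound need not hold at a single point of $\sigma$. Your remedy (a) presupposes a ``dual'' characterisation that is not available in \cite{sormani2016null}: the result used in Corollary \ref{cor_sf_alip} produces anti-Lipschitz \emph{lower} bounds, and there is no companion statement converting an a.e.\ upper gradient bound into a Lipschitz upper bound. Your remedy (b) does not help either: Proposition \ref{prop_tau_prop} gives regularity of $\tau_S$ only along the $S$-maximising geodesics through each point, and the connecting geodesic $\sigma$ from $x$ to $y$ is in general not one of these, so no regularity of $\tau_S\circ\sigma$ follows from it.

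For comparison, the paper avoids curve-wise integration altogether: it works in a chart, shows $\tau_S\circ\phi^{-1}$ is continuous and bounded with measurable, essentially bounded partial derivatives (combining Lemmas \ref{lem_tau_gh_is_cts} and \ref{lem_tau_S_bounded_h_grad} with the measurability result of \cite{MM77}), and then invokes the Evans--Gariepy equivalence of $W^{1,\infty}_{\text{loc}}$ with local Lipschitzness, so the burden of ruling out singular behaviour is carried by that Sobolev characterisation rather than by an integration along individual curves. If you want to salvage your direct two-case approach, you need an independent input excluding Cantor-type behaviour of $\tau_S$ along causal curves --- for instance, first proving local Lipschitzness of the two-point function $d_L$ on a globally hyperbolic manifold and then writing $\tau_S(x)=\sup\{d_L(s,x):s\in \jpast{C}\cap S\}$ over a compact set to transfer the Lipschitz constant. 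As written, the causal-case estimate is not established.
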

    
    \begin{proof}
      Since the statement is local, and all Riemannian metrics are locally
      equivalent, it suffices to check local Lipschitzness with respect to a
      single metric.
    
      Let $x\in M$ and let $\phi:U\to \R^n$ be a chart about $x$.
      Let $\partial_1,\ldots,\partial_n$ be the coordinate
      frame on $U$.
      Choose $e_1,\ldots,e_n$ a pseudo-orthonormal frame over $U$,
      with $g(e_1,e_1)=-1$, $g(e_i,e_j)=\delta_{ij}$
      for $i=1,\ldots, n$ and $j=2,\ldots, n$.
      Define $h:TU\times TU\to\R^+$ a Riemannian metric
      \[
        h(u,v)=g(u,v) - 2\frac{g(u,e_1)g(v,e_1)}{g(e_1,e_1)},
      \]
      so that $h_{ij}=\abs{g_{ij}}=\delta_{ij}$.

      For any open set $V\subset\R^n$
      let $L^\infty(V)$ be the space of bounded Lebesgue measurable
      functions $f:V\to\R$ so that
      $\norm{f}_{\infty}:=\esssup_{V}\abs{f}<\infty$
      and
      let
      $W^{1,\infty}_{\text{loc}}(V)$ to be the Sobolev
      space of all functions $f:V\to\R$
      so that if $C\subset V$ is
      compact then $f|_C\in L^\infty(C)$,
      the weak partial derivatives $\partial_i f|_C$,
      $i=1,\ldots, n$,
      exist and are such that 
      $\partial_i f|_C\in L^\infty(C)$, see
      \cite[Notation on pages 26 and 36, Definition 4.2]{evans1992measure}.
      Since the coordinate map $\phi$ is smooth, $\tau_S$ is locally Lipschitz
      on $M$ with respect to $h$ if and only if $\tau_S\circ\phi^{-1}$ is
      locally Lipschitz with respect to the Euclidean metric.
      By \cite[Theorem 4.5]{evans1992measure}, $\tau_S\circ\phi^{-1}$ is
      locally Lipschitz  if and only if
      $\tau_S\circ\phi^{-1}\in W^{1,\infty}_{\text{loc}}(\phi(U))$.

      Let $C\subset \phi(U)$ be compact.
      Since $\tau_S\circ\phi^{-1}$ is continuous,
      Lemma 
      \ref{lem_tau_gh_is_cts},
      we see that
      $\tau_S\circ\phi^{-1}$ is Lebesgue measurable
      and $\norm{\tau_S\circ\phi^{-1}|_C}_{\infty}<\infty$.
      That is $\tau_S\circ\phi^{-1}|_C\in L^\infty(C)$.

      Lemma \ref{lem_tau_S_bounded_h_grad}
      implies that there exists ${K_1}\in\R^+$ so that
      wherever the $g$-gradient $\grad\tau_S|_{\phi^{-1}(C)}$ exists we have
      $h(\grad\tau_S|_{\phi^{-1}(C)},\grad\tau_S|_{\phi^{-1}(C)}) < {K_1}^2$.
      
      With $(f_i)_{i=1}^n$ the standard orthonormal basis of $\R^n$, the
      partial derivatives of $\tau_S\circ\phi^{-1}$ are given by
      \[
        \partial_i(\tau_S\circ\phi^{-1})
        =
        D(\tau_S\circ\phi^{-1})\cdot f_i.
      \]
      As $\tau\circ\phi^{-1}$ is continuous, the main result of \cite{MM77}
      implies that the sets where the partial derivatives exist are measurable,
      and the functions $\partial_i(\tau_S\circ\phi^{-1})$ are measurable on
      these sets. 
      As $D(\tau_S\circ\phi^{-1})$ exists a.e., these sets are of
      full measure. As 
      $D(\tau_S\circ\phi^{-1})$
      is bounded by Lemma \ref{lem_tau_S_bounded_h_grad}, so too are the
      partial derivatives and $\tau_S\circ\phi^{-1}\in W^{1,\infty}$. Hence
      $\tau_S\circ\phi^{-1}$ and so $\tau_S$ are locally Lipschitz.
    \end{proof}

    Summarising the results of this section, we have the following.

    \begin{corollary}
      \label{corl:surf}
      Let $(M,g)$ be globally hyperbolic.
      If $S\subset M$ is a $C^1$ Cauchy surface,
      then the surface function $\tau_S$ 
      associated to $S$ is 
      \begin{enumerate}[noitemsep]
        \item a locally anti-Lipschitz, locally Lipschitz, time function,
        \item such that
          $\nabla\tau_S$ exists almost everywhere
          and 
          $g(\nabla\tau_S,\nabla\tau_S)= -1$ wherever $\nabla\tau_S$ exists,
          and
        \item the null distance defined by $\tau_S$ is an actual
          metric which
          induces the manifold topology.
      \end{enumerate}
    \end{corollary}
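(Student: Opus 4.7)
The proof consists of assembling the results already established in this section, so the plan is essentially a bookkeeping exercise that dispatches each of the three bullet points to a prior lemma.

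For item (1), the local Lipschitz property is Lemma \ref{lem_sf_llip} and the local anti-Lipschitz property is Corollary \ref{cor_sf_alip}. The time function property then comes for free: Lemma \ref{lem_gh_sur_gtf} already shows $\tau_S$ is non-decreasing along future directed causal curves, and by Definition \ref{def_antilip} and the discussion following it, a locally anti-Lipschitz function must strictly increase on any future directed causal curve. (Alternatively one can appeal to Lemma \ref{lem_tau_gh_is_acausal}, which via acausality of level sets directly forbids $\tau_S$ from being constant along any causal segment.)

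For item (2), since $\tau_S$ is locally Lipschitz by item (1), the statement that $\nabla\tau_S$ exists almost everywhere is Minguzzi's \cite[Theorem 1.19]{minguzzi2019causality}, already recorded in the preliminaries. The pointwise identity $g(\nabla\tau_S,\nabla\tau_S)=-1$ at every point of existence is the content of Proposition \ref{prop_tau_prop}(2), which supplies at each $x\in\future{S}$ an arc length parametrised past directed geodesic whose tangent coincides with $-\nabla\tau_S$ wherever the gradient exists; the case $x\in\past{S}$ follows by time duality, and the set of non-differentiability absorbs the measure-zero set $S$.

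For item (3), the null distance $d(\cdot,\cdot;\tau_S)$ is a metric compatible with the manifold topology by Proposition \ref{prop:SV-metric}, whose hypotheses have just been verified: $\tau_S$ is a generalised time function, it is continuous by Lemma \ref{lem_tau_gh_is_cts}, and it is locally anti-Lipschitz by Corollary \ref{cor_sf_alip}.

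Since every clause of the corollary reduces to a direct appeal to a previously established result, there is no genuine obstacle remaining at this final step; the substantive work was done in Lemma \ref{lem_basic_gh_curve_existence_result} and the gradient bound Lemma \ref{lem_tau_S_bounded_h_grad}, which together underpin both the anti-Lipschitz estimate and the Lipschitz estimate in the chart-level Sobolev argument.
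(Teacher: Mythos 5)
Your proposal is correct and follows essentially the same route as the paper: both proofs are an assembly of Lemma \ref{lem_sf_llip}, Corollary \ref{cor_sf_alip}, Lemma \ref{lem_tau_gh_is_cts}, Lemma \ref{lem_gh_sur_gtf}, Minguzzi's a.e.\ differentiability result, Proposition \ref{prop_tau_prop}, and the Sormani--Vega criterion for the null distance to be a compatible metric. The only (inessential) differences are that the paper also records the well-definedness of $\tau_S$ via Corollary \ref{lem_surface_function_exists} and folds continuity into the ``time function'' claim of item (1), whereas you invoke it only in item (3).
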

    \begin{proof}
      Lemma \ref{lem_surface_function_exists} proves that
      any Cauchy surface has an associated surface function.
      That is, the lemma proves that $\tau_S$ is well defined
      by Definition \ref{def_surface_function} for any 
      Cauchy surface.
      Minguzzi, \cite[Theorem 1.19]{minguzzi2019lorentzian}, 
      shows that $\tau_S$ has an almost everywhere defined
      gradient. Proposition 
      \ref{prop_tau_prop} shows that wherever $\nabla\tau_S$
      exists $g(\nabla\tau_S,\nabla\tau_s)= -1$.
      Lemma \ref{lem_tau_gh_is_cts} shows that
      $\tau_S$ is continuous
      and so
      Lemma \ref{lem_gh_sur_gtf} completes the proof that
      $\tau_S$ is a time function.
      Corollary \ref{cor_sf_alip} proves that
      $\tau_S$ is locally anti-Lipschitz.
      We now know \cite[Theorem 4.6]{sormani2016null}
      that the null distance induced by $\tau_S$
      is a distance and induces the manifold topology.
      Lemma \ref{lem_sf_llip}
      proves that $\tau_S$ is locally Lipschitz.
    \end{proof}
    
    \begin{corollary}
    \label{corl:surf2}
    Let $(M,g)$ be globally hyperbolic.
    If $S\subset M$ is a $C^1$ Cauchy surface,
    then the restriction of the surface function $\tau_S$ 
    associated to $S$ to $\future{S}$ is a regular cosmological time function for $\future{S}$. Hence by Theorem \ref{thm:reg-cosmo}, $\tau_S$ restricted to $\future{S}$ satisfies the assumptions of Theorems \ref{lem_cauchy_sort_of} and \ref{thm_reparametrisation_to_control_limsup}. 
    \end{corollary}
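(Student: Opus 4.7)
My plan is to apply Theorem \ref{thm:reg-cosmo} to the open submanifold $\future{S}$ equipped with $\tau_S|_{\future{S}}$, and for this I must verify three items: (a) $\future{S}$ is itself globally hyperbolic, (b) $\tau_S|_{\future{S}}$ coincides with the cosmological time $\sigma$ of $\future{S}$, and (c) $\sigma$ is regular. Item (a) follows from the observation that $\future{S}$ is closed under the future-causal relation: if $y\in\future{S}$ there is $s\in S$ with $s\ll y$, and any $z\in\jfuture{y}$ then satisfies $s\ll z$, so $z\in\future{S}$. Thus for $p,q\in\future{S}$ the causal diamond $\jfuture{p}\cap\jpast{q}$ lies inside $\future{S}$ and is compact by the global hyperbolicity of $M$; strong causality is inherited from $M$.

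For (b), the same closure property implies that $d_L^{\future{S}}(y,x)=d_L(y,x)$ for $y,x\in\future{S}$ and that $\past{x}\cap\future{S}$ is the chronological past of $x$ in $\future{S}$, whence $\sigma(x)=\sup\{d_L(y,x):y\in\past{x}\cap\future{S}\}$. The inequality $\sigma(x)\leq\tau_S(x)$ follows from choosing, for any such $y$, a point $s\in S$ with $s\ll y$ and applying the reverse triangle inequality $d_L(s,x)\geq d_L(s,y)+d_L(y,x)\geq d_L(y,x)$ together with $d_L(s,x)\leq\tau_S(x)$. The reverse inequality uses Proposition \ref{prop_tau_prop}: the realizing timelike geodesic $\gamma:[0,\tau_S(x)]\to M$ from $S$ to $x$ satisfies $\gamma(t)\in\future{S}$ and $d_L(\gamma(t),x)=\tau_S(x)-t$ for each $t\in(0,\tau_S(x))$, so letting $t\to 0$ yields $\sigma(x)\geq\tau_S(x)$.

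Regularity in (c) consists of finiteness, which is Corollary \ref{lem_surface_function_exists}, and the vanishing condition along past-inextendible causal curves. For the latter, let $\gamma:[0,b)\to\future{S}$ be past-directed and inextendible in $\future{S}$. Since $\tau_S$ is a time function (Lemma \ref{lem_gh_sur_gtf}), $\tau_S\circ\gamma$ is strictly decreasing and bounded below by $0$, hence admits a limit $\ell\geq 0$ as $t\to b$. I then extend $\gamma$ to a past-directed inextendible continuous causal curve $\tilde{\gamma}$ in $M$; the Cauchy property of $S$ forces $\tilde{\gamma}$ to meet $S$ at a first parameter $t^\ast$, and for $t<t^\ast$ the causal segment from $\tilde{\gamma}(t)$ to $\tilde{\gamma}(t^\ast)\in S$ together with acausality of $S$ (ruling out the null case) gives $\tilde{\gamma}(t)\in\future{S}$. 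Inextendibility of $\gamma$ within $\future{S}$ then forces the parameter domain of $\gamma$ to terminate exactly at $t^\ast$, and continuity of $\tau_S$ (Lemma \ref{lem_tau_gh_is_cts}) yields $\ell=\tau_S(\tilde{\gamma}(t^\ast))=0$.

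The main obstacle is the extension argument in (c): one must match the parametrization of $\gamma$ with that of an inextendible extension $\tilde{\gamma}$ in $M$ and argue that $\gamma$ cannot terminate strictly before $\tilde{\gamma}$ reaches $S$. Once (a)--(c) are in hand, Corollary \ref{corl:surf} provides the remaining regularity hypotheses (locally Lipschitz, locally anti-Lipschitz, normalized gradient), and Theorem \ref{thm:reg-cosmo} applied to $\future{S}$ with its regular cosmological time $\tau_S|_{\future{S}}$ delivers the corollary.
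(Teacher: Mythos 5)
The paper states this corollary without providing a proof, so there is nothing to compare against line by line; judged on its own, your argument is correct and supplies exactly the missing content. Your decomposition into (a) global hyperbolicity of $\future{S}$, (b) the identification of $\tau_S|_{\future{S}}$ with the cosmological time of $\future{S}$ via the two-sided inequality (the reverse triangle inequality for $\sigma\leq\tau_S$, and the maximizing geodesic of Proposition \ref{prop_tau_prop} for $\sigma\geq\tau_S$), and (c) regularity via the Cauchy property is the natural route, and each step goes through. The one place where your stated justification is slightly off is in (c): acausality of $S$ does not by itself rule out $\tilde{\gamma}(t)\in\jfuture{S}\setminus\future{S}$ for $t<t^\ast$, since only one endpoint of the offending segment lies on $S$. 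The clean argument uses the disjoint decomposition $M=\future{S}\cup S\cup\past{S}$ of Corollary \ref{lem_surface_function_exists}: $\tilde{\gamma}(t)\notin S$ by minimality of $t^\ast$, and $\tilde{\gamma}(t)\in\past{S}$ would put $\tilde{\gamma}(t^\ast)\in S$ chronologically before another point of $S$, violating achronality; hence $\tilde{\gamma}(t)\in\future{S}$. Similarly, the "extension obstacle" you flag dissolves on inspection: a curve past-inextendible in $M$ and contained in $\future{S}$ would have to meet $S$ (Cauchy property), which is impossible, so $\gamma$ necessarily has a past endpoint $p\in\overline{\future{S}}\setminus\future{S}\subset S$, and continuity of $\tau_S$ (Lemma \ref{lem_tau_gh_is_cts}) gives $\lim_{t\to b}\tau_S(\gamma(t))=\tau_S(p)=0$. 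With these small repairs the proof is complete.
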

    
{\bf Data availability statement} This manuscript has no associated data.    

\pdfbookmark[0]{References}{refs}
\bibliographystyle{plain}
\bibliography{bibliography}

\end{document}